\newtheorem{thm}{Theorem} 
\newtheorem{lem}[thm]{Lemma}
\newtheorem{prop}[thm]{Proposition}
\newtheorem{cor}[thm]{Corollary}
\theoremstyle{definition}
\newtheorem{rem}{Remark}
\newcommand{\Hoelder}{H{\"o}lder\ }
\newcommand{\Leb}{\operatorname{Leb}}
\newcommand{\cI}{{\mathcal I}}
\newcommand{\scL}{{\mathcal L}}
\newcommand{\cM}{{\mathcal M}}
\newcommand{\cP}{{\mathcal P}}
\newcommand{\cR}{{\mathcal R}}
\newcommand{\cS}{{\mathcal S}}
\newcommand{\cE}{{\mathcal E}}
\newcommand{\cV}{{\mathcal V}}
\newcommand{\cT}{{\mathcal T}}
\newcommand{\cY}{{\mathcal Y}}
\newcommand{\cX}{{\mathcal X}}
\newcommand{\cN}{\mathcal N}
\newcommand{\bbM}{\mathbb M}
\newcommand{\bbT}{\mathbb T}
\newcommand{\bbK}{\mathbb K}
\newcommand{\sL}{\mathscr L}
\newcommand{\sD}{\mathscr D}
\newcommand{\bJ}{\mathbf J}
\newcommand{\sT}{\mathcal T}
\newcommand{\sH}{\mathscr H}
\newcommand{\sZ}{\mathscr Z}
\newcommand{\dpp}{\mathrm{d_{\text{\textsc{p}}}^Z}}
\newcommand{\dha}{\mathrm{d_{\text{\textsc{h}}}^Z}}
\newcommand{\rr}{\operatorname{rk}}
\newcommand{\noi}{\noindent}
\newcommand{\GHP}{{\text{\textsc{ghp}}}}
\newcommand{\GH}{{\text{\textsc{gh}}}}
\newcommand{\GP}{{\text{\textsc{gp}}}}
\newcommand{\dgp}{\mathrm{d_{\GP}}}
\newcommand{\dgh}{\mathrm{d_{\GH}}}
\newcommand{\dghp}{\mathrm{d_{\GHP}}}
\newcommand{\bin}{\mathrm{Bin}}
\newcommand{\bj}{{\mathbf j}}
\newcommand{\br}{{\mathbf r}}
\newcommand{\bs}{{\mathbf s}}
\newcommand{\bu}{{\mathbf u}}
\newcommand{\fm}{{\mathfrak m}}
\newcommand{\fT}{{\mathfrak T}}
\newcommand{\fX}{{\mathfrak X}}
\newcommand{\Po}{{\operatorname{Po}}}
\newcommand{\e}{{\mathbf E}}
\newcommand{\tf}{{\mathrm f}}
\newcommand{\Xex}{\mathcal Z}
\newcommand{\cp}{\varphi^\circ}
\newcommand{\cC}{\mathcal C}
\newcommand{\llb}{\llbracket}
\newcommand{\rrb}{\rrbracket}
\newcommand{\Z}{\ensuremath{\mathbb Z}}
\newcommand{\N}{\ensuremath{\mathbb N}}
\newcommand{\R}{\ensuremath{\mathbb{R}}}
\newcommand{\p}[1]{{\mathbf P}\left(#1\right)}
\newcommand{\pc}[1]{{\mathbf P}(#1)}
\newcommand{\Ec}[1]{\ensuremath{\mathbf{E} [#1]}}
\newcommand{\eqdist}{\ensuremath{\stackrel{d}{=}}}
\newcommand{\E}[1]{\ensuremath{\mathbf{E} \left[#1 \right]}}
\newcommand{\Prob}[1]{\ensuremath{\mathbf{P} \left(#1 \right)}}
\newcommand{\I}[1]{\ensuremath{\mathbf{1}_{ \{ #1 \} }}}
\newcommand{\In}[1]{\ensuremath{\mathbf{1}_{  #1 }}}
\newcommand{\Law}{\mathfrak{L}}
\newcommand{\Co} {\mathbb{C}}
\newcommand{\Dex} {\mathbb{D}_{\text{ex}}}
\newcommand{\Cex} {\mathbb{C}_{\text{ex}}}
\newcommand{\Do} {\mathbb{D}}
\newcommand{\DimuM}{\operatorname{\underline{\dim}_{\text{\textsc{m}}}}}
\newcommand{\DimoM}{\operatorname{\overline{\dim}_{\text{\textsc{m}}}}}
\newcommand{\DimH}{\operatorname{\dim_{\text{\textsc{h}}}}}
\newcommand{\DimM}{\operatorname{\dim_{\text{\textsc{m}}}}}
\newsavebox\myboxA
\newsavebox\myboxB
\newlength\mylenA
\newcommand*\xoverline[2][0.75]{%
    \sbox{\myboxA}{$\m@th#2$}%
    \setbox\myboxB\null
    \ht\myboxB=\ht\myboxA%
    \dp\myboxB=\dp\myboxA%
    \wd\myboxB=#1\wd\myboxA
    \sbox\myboxB{$\m@th\overline{\copy\myboxB}$}
    \setlength\mylenA{\the\wd\myboxA}
    \addtolength\mylenA{-\the\wd\myboxB}%
    \ifdim\wd\myboxB<\wd\myboxA%
       \rlap{\hskip 0.5\mylenA\usebox\myboxB}{\usebox\myboxA}%
    \else
        \hskip -0.5\mylenA\rlap{\usebox\myboxA}{\hskip 0.5\mylenA\usebox\myboxB}%
    \fi}
\begin{document}
\title{\bf Self-similar real trees defined as fixed points and \\their geometric properties
\thanks{NB acknowledges the support of the grant ANR-14-CE25-0014 (ANR GRAAL). The research of HS was supported by the FSMP, 
reference: ANR-10-LABX-0098, and a Feodor Lynen Research Fellowship of the Alexander von Humboldt Foundation. }}
\author{Nicolas Broutin\thanks{Sorbonne Université, Campus Pierre et Marie Curie, 4 place Jussieu, 75252 Paris Cedex 05, France. Email: nicolas.broutin@upmc.fr} 
\and Henning Sulzbach\thanks{McGill University, 3480 University Street, H3A 0E9 Montreal, QC, Canada. 
Email: henning.sulzbach@gmail.com.  
Present address: School of Mathematics, University
of Birmingham, Birmingham B15 2TT, Great Britain
}} 

\maketitle

\begin{abstract}
We consider fixed point equations for probability measures charging measured compact metric 
spaces that naturally yield continuum random trees. On the one hand, we study the existence/uniqueness of the fixed points and the convergence of the corresponding iterative schemes. On 
the other hand, we study the geometric properties of the random measured real trees that are 
fixed points, in particular their fractal properties. We obtain bounds on the 
Minkowski and Hausdorff dimension, that are proved tight in a number of applications,
including the very classical continuum random tree, but also for the dual trees of 
random recursive triangulations of the disk introduced by Curien and Le Gall 
[\emph{Ann Probab}, vol. 39, 2011]. The method happens to be especially efficient to treat
cases for which the mass measure on the real tree induced by natural encodings only provides 
weak estimates on the Hausdorff dimensions.
\end{abstract}

%
%
%

\section{Introduction}
\label{sec:intro}

Since the pioneering work of \citet{Aldous1991a,Aldous1993a} who introduced the Brownian continuum 
random tree (Brownian CRT) as a scaling limit for uniformly random labelled trees, similar 
objects have been shown to play a crucial role in a number of limits of combinatorial problems 
that relate to computer science, physics or biology. These objects are 
real trees, or tree-like compact metric spaces (see Section~\ref{sec:results} for a formal 
definition), and they are usually equipped with a probability measure that yields a notion 
of ``mass''. 
They naturally appear when studying asymptotic properties of discrete combinatorial or 
probabilistic objects that are intrinsically ``branching'' or recursive such as branching processes and
fragmentation processes. More surprisingly, further prominent examples are that of 
random maps \cite{LeGall2011a,Miermont2013b,CoVa1981,Schaeffer1998a} and of Liouville quantum gravity 
\cite{DuMiSh2014a} that would a priori not be expected to relate to tree structures. 

In a number of cases, these continuous objects, or, more precisely, their distributions, happen 
to satisfy a stochastic fixed point equation; such fixed point equations are often formulated in terms
of the distribution of functions (later referred to as height functions) that encode the trees. 
One may think in particular of the 
Brownian CRT \cite{al94}, of trees that are dual to recursive triangulations 
of the disk \cite{legalcu}, but also of the genealogies of self-similar fragmentations 
\cite{HaMi2004a}.  We will be more precise about the equations we consider shortly, but it is
nonetheless informative to fix ideas: informally, a \emph{distributional fixed point} 
equation for a random variable (r.v.) $X$ taking values in some Polish space $\mathbb S$ 
of ``objects'' is an equation of the form
\begin{equation}\label{eq:fix_general}
X \eqdist T((X_i)_{i\ge 1}, \Xi),
\end{equation}
where $(X_i)_{i\ge 1}$ is a family of independent and identically distributed copies of $X$, $T$ is a suitable
map, and $\Xi$ incorporates additional external randomness. (A precise formulation of such an equation for random 
metric spaces is more involved. See display \eqref{fix:tree} below.) 
The fact that natural  objects satisfy equations such as the one in \eqref{eq:fix_general} 
raises many questions about the properties of such equations and of their possible fixed points:
\begin{itemize}
\item [(i)] \emph{Under which conditions does there exist a fixed point ?}
\item [(ii)]  \emph{Under which conditions is this fixed point unique ?}
\item [(iii)]  \emph{Can the fixed point be obtained by some iterative procedure ?}
\end{itemize}
The answers to these questions of course depend on the space $\mathbb S$ that is 
considered, and some special care is needed in specifying it. 

 One of the striking features of random real trees that appear 
ubiquitous is their fractal nature.
Among the most classical real trees one may cite the L\'evy trees (including the Brownian 
CRT and the stable trees), which are the scaling limits of 
rescaled Galton--Watson processes, and whose fractal properties have been investigated by 
\citet{DLG05,DLG06} and \citet{picard08}. Another important example is that of the 
fragmentation trees encoding certain self-similar fragmentation processes whose fractal properties 
have been studied by \citet{HaMi2004a} and more recently by \citet{Stephenson2013a}. 
In view of the recursive self-similarity of Equation \eqref{eq:fix_general}, this 
raises an additional question about the geometry of the fixed points:
\begin{itemize}
\item [(iv)]\emph{Can one quantify the fractal dimensions of the fixed points ? }
\end{itemize}

Finally, observe that, for instance, the Brownian CRT is binary, in the sense that the removal of any point disconnects the space into $1$, 
$2$ or $3$ connected components with probability one (the number of connected components is 
called the degree of the point removed). This is to be compared with the classical
decomposition of the Brownian CRT into three pieces \cite{al94,albgol2015, hamcro}. Another example we 
have already mentioned, dual trees of recursive triangulations of the disk happen to have 
maximal degree three, while the natural fixed point equation they satisfy only uses two pieces. 
These considerations raise yet another question about the geometry of solutions to equations 
such as \eqref{eq:fix_general}:
\begin{itemize}
\item [(v)]\emph{Can one fully characterize the degrees of points in fixed points ?}
\end{itemize}

Our aim in this paper is to provide answers to questions (i)--(v) in a general framework 
in which the limit objects are (most often) some classes of measured real trees. 
This framework allows 
for instance to deal with certain recalcitrant cases where the natural height function 
for the tree is not a ``good'' encoding, in the sense that its optimal H\"older exponent does 
not yield the fractal dimension of the metric space (we will be more precise shortly). 
At this point, let us mention that questions (i), (ii) and (iii) have recently been studied 
by \citet{albgol2015} for the specific example where the fixed point equation is the one 
described by Aldous in \cite{al94} and that is satisfied by the Brownian CRT. In passing,
our results answer a question in \cite{albgol2015} regarding point (iii) and the 
convergence to the (non-unique, but natural) fixed point. Other applications of our results concern trees arising as scaling 
limits in the problem of recursive triangulations of the disk (see \cite{legalcu} and 
\cite{BrSu2013a}), and but also other natural generalizations. \citet{ReWi2016a} have also 
very recently studied questions (i), (ii) and (iii) for a decomposition of the 
form \eqref{fix:tree} which is rather different from ours. 
See the remark at the end of Section \ref{sec:rec_trees} for details.

\medskip\noindent
\textbf{Organization of the paper.}\ The paper is organized as follows: 
In Section~\ref{sec:settings}, we first give the relevant background on the objects, 
metrics and spaces, and geometric properties we use in the document; 
we then introduce the precise setting for the recursive equations we consider, 
and the corresponding functional point of view. Section~\ref{sec:results} is devoted 
to the statements of our main results; it also contains a sample of applications and 
an overview of the techniques we use. Section~\ref{sec:proofs} contains the proofs 
of the results about existence and uniqueness of solutions to our recursive equations, 
as well the behaviour of iterative schemes. Section~\ref{sec:proofs_frac} contains 
the proofs of the geometric properties of the fixed points. 
Finally, Section~\ref{sec:app} is devoted to applications. 
Various proofs of technical results are given in appendix.

\section{Settings and preliminaries}\label{sec:settings}

\subsection{Spaces, metrics and convergence}\label{sec:metrics}

With the exception of Section \ref{subsec:fr}, we assume throughout the paper that metric spaces are \emph{compact}. General references on
the topics that we are about to discuss include \cite{Gromov1999,GrPfWi2009a,BuBuIv2001}.  For measured spaces, we restrict our attention to \emph{probability measures.}

\medskip
\noi\textsc{The Gromov--Hausdorff--Prokhorov topology.}\ 
For two compact metric spaces $(\cX,d)$ and $(\cX',d')$, 
the \emph{Gromov--Hausdorff distance} \sloppy 
$\dgh((\cX,d),(\cX',d'))$ is defined as 
\begin{align} \label{def:gh} 
\dgh((\cX,d),(\cX',d')) = \inf_{Z, \phi, \phi'} \dha(\phi(\cX),\phi'(\cX')), 
\end{align}
where the infimum is taken over all compact metric 
spaces $(Z,d^Z)$, and isometries $\phi: \cX\to Z$ and $\phi':\cX'\to Z$. 
Here, $\dha$ denotes the Hausdorff distance in $Z$, that is 
$$\dha(A,B) 
= \inf \{ \varepsilon > 0: A \subseteq B^\varepsilon \: \text{and} \: B \subseteq A^\varepsilon \},$$ 
with $A^\varepsilon = \{ x \in Z : d^Z(x,A) \leq \varepsilon\}$. If $(\cX,d)$ and $(\cX',d')$ 
are isometric, then $\dgh((\cX,d),(\cX',d'))=0$. $\dgh$ 
induces a metric on 
the set $\mathbb K^{\GH}$ of isometry classes of compact metric spaces and turns this set 
into a Polish space, see, e.g.\ \cite[Theorem 2.1]{Legall2005}.

A compact rooted (or pointed) measured metric space $(\cX, d, \mu, \rho)$ is a compact 
metric space $(\cX, d)$ endowed with a probability measure $\mu$ and one distinguished 
point $\rho$. For two  such spaces $\fX=(\cX,d, \mu, \rho)$ and $\fX'=(\cX', d', \mu', \rho')$, 
we define the \emph{Gromov--Hausdorff--Prokhorov} distance by
\begin{align*}\dghp(\fX, \fX') 
= \inf_{Z, \phi, \phi'} 
\big \{ &d^Z(\phi(\rho), \phi'(\rho')) 
+ \dha(\phi(\cX),\phi'(\cX')) \\
& + \dpp(\phi_*(\mu), \phi_*'(\mu'))\big\}. \end{align*}
Here, the infimum is to be understood as in \eqref{def:gh}, $\phi_*(\mu)$ is the 
push-forward of $\mu$ in $Z$, and $\dpp$ denotes the Prokhorov metric on the 
set of probability measures on $Z$, that is,
\begin{align*}
\dpp(\nu_1, \nu_2) 
= \inf \{ \varepsilon >0: &\: \nu_1(A) \leq \nu_2(A^\varepsilon) + \varepsilon \ 
\text{and} \ \nu_2(A) \leq \nu_1(A^\varepsilon) + \varepsilon \\ 
&  \: \text{for all measurable sets } A \subseteq \cX \}. 
\end{align*}
We call $\fX$ and $\fX'$ $\GHP$-isometric if there exists a bijective isometry $\phi$ 
between $X$ and $X'$ that maps $\rho$ to $\rho'$ and such that $\phi_*(\mu)=\mu'$. 
$\dghp$ induces a metric on the set $\bbK^{\GHP}$ of $\GHP$-isometry 
classes of  compact  rooted measured metric spaces that turns it into a Polish 
space \cite{abrdelhos}. 
For a compact rooted measured metric space $\mathfrak X$ (or an element of $\mathbb K^\GHP$), we set $\| \fX \| = \sup \{d(\rho, x) : x \in \cX\}$.

\medskip \noi\textsc{The Gromov--Prokhorov topology.} 
Analogously to the Gromov--Hausdorff--Prokhorov distance, for two  compact rooted 
measured metric spaces $\fX = (\cX, d, \mu, \rho)$ and $\fX' = (\cX', d', \mu', \rho')$ \footnote{When we do not introduce the components of $\fX$ with 
a given decoration explicitly, we always suppose that they would carry the same decoration 
as done here for $\fX'$.} 
we define
$$\dgp(\fX, \fX') 
= \inf_{Z, \phi, \phi'} \left\{ d^Z(\phi(\rho), \phi'(\rho'))  
+ \dpp(\phi_*(\mu), \phi_*'(\mu'))\right\}.$$
We call $\fX$ and $\fX'$ $\GP$-isometric if $\dgp(\fX, \fX') = 0$ which happens to 
be the case if and only if there exists a bijective isometry 
$\phi : \text{supp}(\mu) \to \text{supp}(\mu')$ with $\rho' = \phi(\rho)$ 
and $\mu' = \phi_*(\mu)$.  (Here $\text{supp}(\mu)$ denotes the support of $\mu$.)
Endowed with $\dgp$, the set $\bbK^{\GP}$ of $\GP$-isometry classes of compact rooted 
measured metric spaces becomes a Polish space \cite[Proposition 5.6]{GrPfWi2009a}. 
In general, $\GP$-equivalence classes in $\bbK^{\GP}$ contain spaces that are not 
$\GHP$-isometric. But when both $\mu$ and $\mu'$ have full support, then $\fX$ and
$\fX'$ are $\GP$-isometric if and only if they are $\GHP$-isometric. Thus, if we 
denote by $\bbK^{\GHP}_\tf$ the set of $\GHP$-isometry classes of compact rooted 
measured metric spaces satisfying
\begin{enumerate}
\item [\textbf{(C1)}] $\text{supp}(\mu) = \cX$,  
\end{enumerate}
then, there exists a natural bijection $\iota$ between the spaces $\bbK^{\GHP}_\tf$ 
and $\bbK^{\GP}$.  The set $\bbK^{\GHP}_\tf$ is measurable, and $\iota$ bimeasurable
so we can and will consider any random variable with values in $\bbK^{\GHP}_\tf$ also as random variable in $\bbK^{\GP}$ 
and vice versa. (Proving these statements makes use of Lemma 3.2 and Corollary 5.6 in \cite{ALW} as well as the Lusin--Souslin theorem. See Lemma~\ref{lem:iota} in the appendix for details.)

\medskip
\noindent \textsc{Remark}. 
We occasionally use results from \cite{Gromov1999, BuBuIv2001, GrPfWi2009a, DeGrPf} which only treat the case of unrooted compact measured metric spaces. Incorporating a root 
vertex only generates marginal modifications that we do not discuss in detail.

\subsection{Real trees, continuum trees and recursive decompositions} \label{sec:rtc}

We are mostly interested in a certain class of metric spaces that are tree-like. 

\medskip
\noi\textsc{Real trees.}\ A metric space $(\sT,d)$ is a called \emph{real tree} if it has the 
following properties: 
\begin{compactenum}
\item for every $x,y \in \sT$ there exists a unique isometry $\varphi_{x,y} : [0, d(x,y)] \to \cT$ 
with $\varphi_{x,y}(0) = x$ and $\varphi_{x,y}( d(x,y)) = y$, 
(we write $\llb x,y\rrb:=\varphi_{x,y}([0,d(x,y)])$ for the \emph{segment} between $x$ and $y$ in $\cT$),
\item if $q: [0,1] \to \cT$ is a continuous and injective map with $q(0) = x, q(1) = y$, then 
$q([0,1]) = \llb x,y\rrb.$  
\end{compactenum}
We denote by $\bbT^{\GH}$ the closed subset of $\bbK^{\GH}$ consisting of isometry classes 
of compact real trees.
For a compact real tree $(\sT, d)$ and $x \in \sT$, we denote by $\text{deg}(x)$ the number 
of connected components of $\sT \setminus \{x\}$. 
We call $x \in \sT$ a \emph{leaf} if $\text{deg}(x) = 1$, and abbreviate $\mathscr L$ for 
the set of leaves. 
We call $x \in \sT$ a \emph{branch point} 
if $\text{deg}(x) \geq 3$.  By compactness, the set of branch 
points $\mathscr B$ is at most countable.

\medskip
\noi\textsc{Measured and continuum real trees.}\ 
A  compact rooted measured real tree $\mathfrak T = (\sT, d, \mu, \rho)$ is a 
compact real tree $(\sT, d)$ endowed with a probability measure $\mu$ and a 
distinguished point $\rho \in \sT$ called the \emph{root}. 
(Recall that we restrict our attention to the setting where the measure is a probability distribution.)
For $x\in \cT$, the distance $d(x,\rho)$ is called the height of $x$ and $\| \fT \|:=\sup\{d(x,\rho):x\in \cT\}$ the \emph{height} of $\mathfrak T$. 
By $\bbT^{\GHP} \subseteq \bbK^{\GHP}$ we denote the closed subset of $\GHP$-isometry 
classes of  compact rooted measured real trees. 
Spaces carrying a measure with full support are particularly important, and we let 
$\bbT^{\GHP}_\tf = \bbK^{\GHP}_\tf \cap \bbT^{\GHP}$ and call 
elements in $\bbT^{\GHP}_\tf$ \emph{continuum real trees} (or simply continuum trees).
Note that both $\bbK^{\GHP}_\tf$ and $\bbT^{\GHP}_\tf$ are non-closed subsets 
of $\bbK^{\GHP}$. In the literature on continuum real trees, see, e.g.\ \cite{Aldous1993a, albgol2015}, 
one often finds the following two additional conditions: 
\[
\textbf{(C2)}\quad \mu \text{ has no atoms}\qquad \qquad \text{and} \qquad \qquad 
\textbf{(C3)}\quad \mu(\mathscr L) = 1\,. 
\]
All continuum trees playing a role in this paper satisfy both \textbf{C2} and 
\textbf{C3}. However, we emphasize the fact that we do not impose these conditions 
beforehand: they can be proved to hold as a non-trivial consequence of our setting; see 
Proposition~\ref{prop:degrees}.

\medskip 
\noi\textsc{Real trees encoded by excursions.}\  
One natural way to define real trees is via an encoding by continuous excursions (see e.g., 
\cite{Legall2005,evans}). Let $\Co$ be the space of continuous functions on $[0,1]$, which we always endow with the uniform norm
$\| f \| = \sup_{t \in [0,1]} |f(t)|$.
 Let also $\Cex$ denote the set of unit-length non-negative continuous excursions, that is, the set of functions $f\in \Co$ such that
 $f(0) = f(1) = 0$ and $f(t) \geq 0$ for all $t \in (0,1)$. For $f \in \Cex$, define the pseudometric $d_f$ by
$$d_f(x,y) := f(x) + f(y) -2 \inf\{f(s): x\wedge y \le s \le x \vee y\}\,.$$
Let $\sT_f = [0,1] /\!\!\sim $ where $x \sim y$ if and only if $d_f(x,y) = 0$. 
The compact metric space $(\sT_f, d_f)$ is a real tree, which we call the real tree 
encoded by $f$; we will also sometimes denote the continuous excursion $f$ as being 
a \emph{height process} for the real tree $\cT_f$. 
We use $\mathscr L_f$ for the sets of leaves of $\sT_f$.

As noted in \cite[][Remark following Theorem~2.2]{Legall2005} (see also 
\cite[Corollary 1.2]{duq1}), for every compact real tree $(\sT, d)$, there 
exists $f \in \Cex$ such that $(\sT, d)$ and $(\sT_f, d_f)$ are isometric. Two real trees $(\sT_f, d_f)$ and $(\sT_g, d_g)$, encoded by continuous excursions $f$ and $g$ respectively, 
are isometric, if, for instance, $f = g \circ \phi$ for a continuous and strictly 
increasing function $\phi : [0,1] \to [0,1]$. In this case, we call the encoding excursions $f$ and $g$ \emph{time-change-equivalent} or simply \emph{equivalent}.

Let $\pi_f: [0,1]\to \cT_f$ be the canonical surjection.
Then, we can turn the real tree $(\sT_f, d_f)$ into a  compact rooted measured real tree $\fT_f=(\cT_f, d_f, 
\mu_f, \rho_f)$ using the push-forward measure $\mu_f := \Leb \circ \, \pi_f^{-1}$, where $\Leb$ 
denotes the Lebesgue measure on $[0,1]$, and the root is $\rho_f := \pi_f(0)$.
Then, $\mu_f$ has full support, and thus $\fT_f$ 
satisfies \textbf{C1}. Finally, it is well-known that the map between $(\Cex, \| \cdot \|)$ and
$(\bbT^{\GHP}_\tf, \dghp)$ that associates $\fT_f$ to an $f\in \Cex$ is (Lipschitz) 
continuous \cite[Proposition 3.3]{abrdelhos}). Hence, for any random variable
$\Xex$ with values in $\Cex$, the corresponding (\GHP-equivalence class of 
the) compact rooted measured real tree $\fT_{\Xex}$ is a random variable with values 
in~$\bbT^{\GHP}_\tf$.

\subsection{Metric spaces described by recursive decompositions}
\label{sec:rec_trees}
 
We now introduce a general framework for random compact rooted measured metric spaces 
satisfying recursive distributional decompositions. A similar construction has already been 
used in the specific context of the Brownian CRT, see for instance \cite{al94} and
\cite{albgol2015}.

Let $\Gamma$ be a rooted plane tree\footnotemark \footnotetext{A rooted plane tree is a subset $t$ of $\cup_{n\ge 0} (\N\setminus \{0\})^n$, such that (a) if a word $u\in t$ 
then all its prefixes also are in $t$ and (b) if $ui \in t$ then also $uj\in t$ for $j=1,\dots, i-1$. The depth-first order on $t$ is the order induced on $t$ by the lexicographic order.} with $K$ vertices, where $K \geq 2$. 
We call $\Gamma$ the \emph{structural 
tree} \addtocounter{footnote}{-1} of the recursive decomposition, and accordingly, the decomposition of a space (or tree) 
will involve $K$ subparts. We consider $\Gamma$ as a labelled tree upon labelling the root by 1 and the remaining nodes in the \emph{depth-first order}\footnotemark.
We write $i \prec j$ if $j$ lies in the subtree rooted at $i$. (We always have $i \prec i$.) 
For $i \in [K] := \{1, \ldots, K\}$, we set 
\begin{align}
&\Gamma_i = \{ i \leq j \leq K : i \prec j\}, 
\quad \text{and }\quad 
E_i = \{1 \leq j < i : j \prec i\}. \label{def:subtree}
\end{align}
For $i \geq 2$, we denote by $\varpi_i = \max E_i$ the label of the parent of 
node $i$. Next, fix $\alpha > 0$, and $\br, \bs \in 
\Sigma_K := \{ (x_1,x_2,\dots, x_K) \in (0,1)^K : x_1 + \ldots + x_K = 1\}$. 
We consider the following construction (see Section~1.4 in \cite{albgol2015} for a related construction): 
given compact rooted measured metric spaces $(\cX_i, d_i, \mu_i, \rho_i)$, $i \in [K]$,
 construct a compact rooted measured metric space $(\cX, d, \mu, \rho)$ as follows: \label{list}
\begin{enumerate}
\item Independently sample points $\eta_i \in \cX_i, i \in [K]$, according to the probability measures $\mu_i$;
\item Let $\cX^\circ = \sqcup_{i=1}^K \cX_i$ denote the disjoint union of the $\cX_i$, $i=1,\dots, K$; \label{it:construction}
let $\sim_\circ$ be the  smallest equivalence relation\footnotemark \footnotetext{Formally, $\sim_\circ$ can be defined as follows: first, 
set $\rho_i\sim_1 \eta_{\varpi_i}$ and $\eta_{\varpi_i} \sim_1 \rho_i$ for all $i=2,\dots, K$. Then, for $x,y \in \cX^\circ$, set $x \sim_\circ y$ if and only if $x=y$ or there exist $k \geq 0$ and 
$x_1, \ldots, x_k \in \cX^\circ$ such that $x \sim_1 x_1 \sim_1 x_2 \ldots \sim_1 x_k \sim_1 y$.} on $X^\circ$ for which
$\rho_i\sim_\circ \eta_{\varpi_i}$ for all $i=2,\dots, K$. 
We define $\cX$ as the quotient $\cX^\circ/\!\!\sim_\circ$ and write $\cp$ 
for the canonical surjection from $\cX^\circ$ onto $\cX$. 
\item Let $d^\circ$ be the maximal pseudometric on $\cX^\circ$ that is not greater than 
$r_i^\alpha d_i$ on $\cX_i$, and for which $d^\circ(x,y)=0$  if $x\sim_\circ y$; 
define $d$ as the metric induced on $\cX$ by $d^\circ$ under $\cp$; 
(see Section 3.1.3 of \cite{BuBuIv2001}, and especially Lemma~3.1.23, Corollary~3.1.24 
and Theorem~2.1.27 there which guarantee existence and uniqueness of $d^\circ$.)
\item Let $\mu^\circ(\cdot) = \sum_{i=1}^K s_i \mu_i(\cdot \cap \cX_i)$ be the unique probability measure on $\cX^\circ$ that is compatible with 
$s_i \mu_i$ when restricted to $\cX_i$; define $\mu$ as the push-forward of $\mu^\circ$ under $\cp$.
\item Finally, let $\rho = \cp(\rho_1)$ be the root. 
\end{enumerate}
Note that, because of the need to sample  $\eta_1, \ldots, \eta_K$, the $\GHP$-equivalence 
class of the resulting space $\fX=(\cX,d, \mu, \rho)$ is random. It is crucial to observe that 
it is a random variable (Lemma~\ref{lem:conti1} in Appendix) whose 
 distribution only depends on the $\GHP$-isometry classes of $\mathfrak X_1, \ldots, \mathfrak X_K$. Hence, denoting 
by $\cM_1(\mathbb K^\GHP)$ the set of probability measures on $\mathbb K^\GHP$, the map 
$\psi : (\mathbb K^\GHP)^K \times \Sigma_K^2 \to \cM_1(\mathbb K^\GHP)$ (where $\Sigma_K^2$ 
incorporates the choice of $(\br,\bs)$) whose image is described by the construction above 
is well-defined and continuous when considering $\cM_1(\bbK^\GHP)$ equipped with the 
Prokhorov distance. (For a technical proof of continuity using the concept of correspondences and a coupling theorem due to Strassen \cite{str65}, we refer to Lemma~\ref{lem:continuity_psi} in the appendix.)
For probability measures $\tau$ on $\Sigma_K^2$ and $\aleph$ on $\mathbb K^\GHP$, we 
define the intensity measure 
\begin{equation}\label{eq:def_Psi}
\Psi(\aleph, \tau) (A) := \E{\psi(\fX_1, \ldots, \fX_K, \cR, \cS)(A)}, 
\quad A \subseteq \bbK^\GHP \quad \text{measurable},
\end{equation}
 where $\Law((\cR, \cS)) = \tau$,
$\Law(\fX_1) = \ldots = \Law(\fX_K) = \aleph$ and $(\cR, \cS), \fX_1, \ldots, \fX_K$ are 
independent. Here, and throughout the document, we use $\Law(\cdot)$ to denote the distribution of a random variable. 

Given $\Gamma$, $\alpha > 0$ and a probability distribution $\tau$ on $\Sigma_K^2$, we are interested in non-trivial laws $\aleph \in \cM_1(\mathbb K^\GHP)$ 
satisfying 
\begin{align} \label{fix:tree} 
\aleph = \Psi(\aleph, \tau).
\end{align} 
We refer to \eqref{fix:tree} as a stochastic fixed point 
equation at the level of compact rooted measured metric spaces. 
Note that, while we have introduced the map $\Psi$ for distributions on the space of $\GHP$-isometry classes 
of spaces, in the same way, one can define $\Psi$  relying on $\GP$-isometry classes, and we will 
occasionally use $\Psi$ in this sense.

\begin{figure}[!htbp] 
  \centering
  \includegraphics[width=9cm]{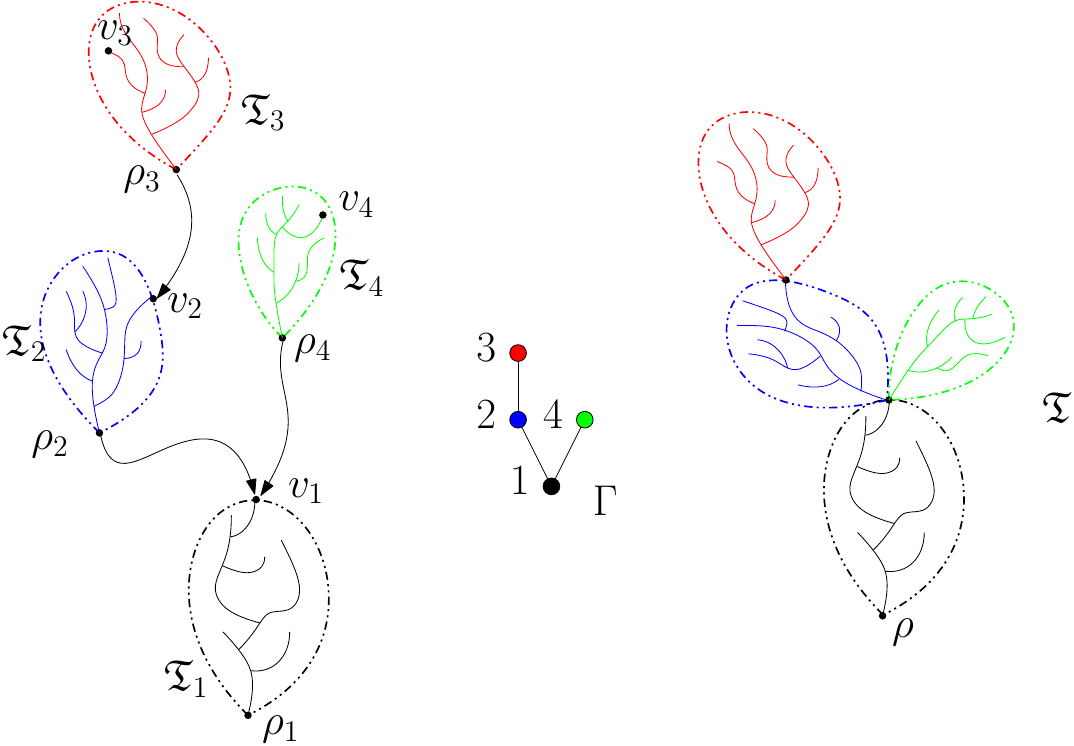}           
  \caption{\label{fig:comb_tree}The construction of $\fX$ with law $\psi(\fX_1,\fX_2,\fX_3, \fX_4,\cR, \cS)$ 
  with the structural tree $\Gamma$ shown in the middle.
  For simplicity, we have not rescaled any of the distances. An excursion point of view is depicted 
  in Figure~\ref{fig:comb_map}. (For the sake of representation, we have chosen the spaces to be tree-like.) Note that the roots $\rho_2, \rho_4$ are identified with the same point $v_1$ in the parent space $\cX_1$.}
\end{figure}

Given $\Gamma$ and $\tau$, not every $\alpha$ is admissible for \eqref{fix:tree} to have 
non-trivial solutions. The parameter $\alpha$ is chosen as the unique value such that 
 the height of an independent point sampled according to $\mu$ has finite mean, 
see the discussion of \eqref{fix:Y} below. Here, this condition can be expressed as follows. 
Recall $\Gamma_i$ defined in \eqref{def:subtree},
and define $\alpha >0$ as the unique solution to 
\begin{align} \label{def:alpha} 
\mathbf E \Bigg[\sum_{1\le i\le K} \cR_i^\alpha \I{J\in \Gamma_i}\Bigg] = 1
\quad\text{with}\quad 
\pc{J=j\,|\, \cR,\cS}= \cS_j, \quad 1\le j\le K.  
\end{align}
Such an $\alpha$ always exists and lies in the interval $(0,1)$ by monotonicity and continuity in $\alpha$ of the expected value 
and the values for $\alpha\in \{0,1\}$.
\emph{From now on, unless specified otherwise, we will always assume that $\alpha$ has been chosen 
to satisfy \eqref{def:alpha}}.

\medskip 
\noi \textsc{The height of a random point.}\
In a random compact rooted measured metric space $\fX = (\cX, d, \mu, \rho)$, heights of points
sampled according to $\mu$ play an important role. Recall $E_i$ from \eqref{def:subtree}.
In our construction,  with $\fX$ (or, rather its distribution) satisfying \eqref{fix:tree} and abbreviating $Y := d(\rho, \zeta)$ 
where $\zeta$ has distribution $\mu$ (given $\mathfrak X$), we have
\begin{align} \label{fix:Y} 
Y \stackrel{d}{=} 
\sum_{i=1}^K \I{J=i} \Bigg[ \mathcal R_i^\alpha Y^{(i)} + \sum_{j \in E_i}  \mathcal R_j^\alpha Y^{(j)}\Bigg] 
\stackrel{d}{=} \sum_{i=1}^K \beta_i^\alpha Y^{(i)},
\quad
\beta_i = \cR_i \I{J\in \Gamma_i}\,,
\end{align}
and $Y^{(1)}, \ldots, Y^{(K)}$ are distributed like $Y$, and $(J, \mathcal R, \mathcal S), 
Y^{(1)}, \ldots, Y^{(K)}$ are independent with $J$ as in \eqref{def:alpha}.
By \cite[Theorem 2]{durlig}, the distribution of $d(\rho, \zeta)$ is 
uniquely determined by \eqref{fix:Y} in the space of probability distributions on $[0, \infty)$ 
up to a multiplicative constant. (To be more precise, in the notation of \cite{durlig}, we have\footnote{The $\alpha$ in the paper \cite{durlig} should not be confused with the one 
defined in \eqref{def:alpha}.}
$\alpha = 1$ and $v'(1) < 0$ and therefore, as explained in the discussion following
Theorem 2 in \cite{durlig}, the fixed points of \eqref{fix:Y} are parametrized by their means provided their existence.) 
 \cite[Theorem 2]{durlig} further implies that, for any different choice of $\alpha$, the random variable 
$d(\rho, \zeta)$ has either infinite mean, or, almost surely, $\cX = \{\rho \}$.

Finally, we note that, from the last display, one can easily deduce that \begin{align} \label{01law} \Prob{d(\rho, \zeta) > 0} \in \{0,1\}. \end{align}

\noi\textsc{Examples.}\ {\bf 1)} The most celebrated example of a random measured real 
tree that satisfies a fixed point equation such as \eqref{fix:tree} is Aldous' 
Brownian CRT \cite{Aldous1991a,Aldous1993a}. 
Its recursive structure has been investigated in \cite{al94}, where it has been 
proved that it satisfies a fixed point equation of the type \eqref{fix:tree}, 
with $K=3$, $\Gamma$ is the tree on $\{1,2,3\}$ with $2$ and $3$ that are children 
of $1$, $\cR=\cS=(\cS_1,\cS_2, \cS_3)\sim \text{Dirichlet}(1/2,1/2,1/2)$, 
and in this case, $\alpha=1/2$.

{\bf 2)} An instance has also appeared in the context of recursive triangulations 
of the disk \cite{legalcu, BrSu2013a}. There, $K=2$ (and thus $\Gamma$ is the tree 
on $\{1,2\}$ where $2$ is a child of $1$), $\cS=(\cS_1, \cS_2)\sim \text{Dirichlet}(2,1)$,
$\cR=\cS$ and $\alpha$ turns out to be given by $(\sqrt{17}-3)/2$.

{\bf 3)} Another tree related to {\bf 2)} is given by $K=2$, $\cS=(\cS_1, \cS_2)\sim \text{Dirichlet}(2,1)$,
$\cR\sim \text{Dirichlet}(1,1)$ independent of $\cS$, and for this case, one finds $\alpha=1/3$. The tree 
has not been considered explicitly, but it appears in \cite{legalcu, BrSu2013a} via one of its encoding 
processes.

\medskip
\noi \textsc{Remark}. Recently, \citet{ReWi2016a} have also studied recursive constructions 
of continuum random trees and the corresponding geometries using fixed points arguments. 
The decompositions they consider amount to seeing the trees as a forest of rescaled copies 
of random tree grafted on some segment (a so-called (random) \emph{string of beads}) rather 
than on a copy of the tree itself.
They typically involve an infinite number of pieces. Technically, Rembart and Winkel 
use a variant of the contraction method relying on Wasserstein distances on (a slightly 
modified version of) $\bbK_\tf^{\GHP} \cap \bbT^\GH$ 
to verify uniqueness and attractiveness of fixed points. While both approaches rely 
on recursive decompositions and contraction arguments, the results seem largely disjoint. 
In particular many of the examples our results cover do not seem easily amenable to a spinal 
decomposition (e.g.\ lamination of the disk), and conversely, many examples covered by the 
results in \cite{ReWi2016a} are not covered by our combinatorial decompositions.

\subsection{Recursive decompositions: An excursion point of view}
\label{sec:rec_exc}

It is convenient to express the construction in Section~\ref{sec:rec_trees} in terms of 
excursions. It is important to note that the excursion point of view developed here 
forces the spaces to be \emph{real trees}, while in the previous section spaces were only required 
to be compact. However, we will see later on that this restriction is an important technical 
ingredient: we will prove that the support of the mass measure of any fixed point of the 
equations of interest is actually almost surely a real tree, see Theorem~\ref{thm:treeuniq} 
{\em ii)}.

Let $\Gamma$ be as in Section~\ref{sec:rec_trees}, $\br, \bs \in \Sigma_K$ and assume 
for now that $\alpha > 0$ is arbitrary. We now describe a decomposition of the unit 
interval based on the tree $\Gamma$. Each node of $\Gamma$ will be assigned two 
intervals, except the leaves that will be assigned a single one. In the following, 
we write $\partial \Gamma$ for the set of leaves of $\Gamma$ (the nodes $i$ such 
that $\Gamma_i=\{i\}$), and $\Gamma^o$ for $\Gamma\setminus \partial \Gamma$.
Set $L := 2K - |\partial \Gamma|$ and let $w_1, \ldots, w_L \in \{1, \ldots, K\}$ be 
the sequence of nodes visited by the depth first search process upon counting only 
the \emph{first} and the \emph{last} visit of a node. All nodes appear twice in this 
sequence except for leaves which appear once. 
For $i \in [K]$, let $V_i$ be the set of times $1 \leq j \leq L$ with $w_j = i$. 
More formally, for $i\in [K]$, we can set 
$v_i=i+\#\{1\le j<i: j\not\in E_i \cup \partial\Gamma\}$ and obtain 
$V_i=\{v_i, v_i+2 |\Gamma_i| - \partial \Gamma_i -1 \}$. 
Observe that $V_i$ contains a unique element if $i\in \partial \Gamma$, and two otherwise. 
For $\bu = (u_1, \ldots, u_K) \in (0,1)^K$, there is a unique decomposition of the 
unit interval into $L$ half-open\footnote{We call an interval half-open if it is of 
the form $(a,b]$ with $0 < a \leq b \leq 1$ or $[0,a]$ with $0 \leq a  \leq 1$.} 
intervals $I_1, \ldots, I_L$ (following the natural order on $[0,1]$), such that, 
\[s_i = \sum_{k \in V_i} \Leb(I_k) \text{ for all }i \in [K],
\quad \text{and} \quad 
u_i = \Leb(I_{\min V_i})/ s_i \text{ for all }i\in \Gamma^o.\]
Then, define 
\begin{equation}\label{eq:def_sets}
\Lambda_i = \bigcup_{k \in V_i} I_k 
\qquad \text{and} \qquad 
\varphi_i : \overline{\Lambda_i} \to  [0,1]
\end{equation} 
as the unique function which is bijective, monotonically increasing and piecewise 
linear with constant slope. (Here, we $\overline{A}$ denotes
the closure of a set $A$.) We can now define the continuous operator
$\Phi: \Cex^K \times \Sigma_K^2 \times (0,1)^K \to \Cex$, such 
that $g = \Phi(f_1, \ldots, f_K, \br, \bs, \bu)$ is the unique excursion satisfying 
\begin{align}\label{def:phi} 
g(x) - g(y) =  r_{w_\ell}^\alpha 
\left[f_{w_\ell}(\varphi_{w_\ell}(x)) -  f_{w_\ell}(\varphi_{w_\ell}(y))\right],  
\end{align}
for all $1 \leq \ell \leq L$ and $x,y \in I_\ell$. 
In other words, up to the scaling factor  $r_i^\alpha$ in space, the function $f_i$ 
is first fitted to an interval of length $s_i=\Leb(\Lambda_i)$ and then used on 
the set $\Lambda_i$  (which may consist of two intervals). 
For an illustration see Figure~\ref{fig:comb_map} (and compare 
it with the corresponding version involving trees on Figure~\ref{fig:comb_tree}). 
By construction, for $f_1, \ldots, f_K \in \Cex, 
\mathbf{r}, \mathbf{s} \in \Sigma_K$ and $\Xi$ uniformly distributed on $(0,1)^K$, we have
$$\Law \left( \mathfrak T_{\Phi(f_1, \ldots, f_K, \mathbf{r}, \mathbf{s}, \Xi)} \right) 
= \psi (\fT_{f_1}, \ldots, \fT_{f_K}, \mathbf{r}, \mathbf{s}).$$
Thus, the distribution of the random compact rooted measured real 
tree $(\sT_{\Xex}, d_{\Xex}, \mu_{\Xex}, \rho_{\Xex})$ satisfies \eqref{fix:tree} if 
\begin{align}\label{fix:X}
{\Xex} \stackrel{d}{=} \Phi \big({\Xex}^{(1)}, \ldots,  {\Xex}^{(K)}, \mathcal R, \mathcal S, \Xi \big), 
\end{align}
where ${\Xex}^{(1)}, \ldots, {\Xex}^{(K)}$ are independent copies of ${\Xex}$, independent of $(\cR, \cS, \Xi)$, $\Xi$ 
and $(\cR, \cS)$ being independent and $\Xi = (\xi_1, \ldots, \xi_K)$ being uniformly distributed on 
$(0,1)^K$; of course, in this case, $\alpha$ shall be chosen as in \eqref{def:alpha}.
The fixed point equation in \eqref{fix:X} can be expressed alternatively as
\begin{align*} 
{\Xex}(\cdot) & \stackrel{d}{=} 
\sum_{i=1}^K \mathcal R_i^\alpha
\Bigg [   \mathbf{1}_{\Lambda_i}(\cdot) {\Xex}^{(i)}(\varphi_i(\cdot)) +  \sum_{j\in \Gamma_i\setminus \{i\}}\mathbf{1}_{\Lambda_{j}} (\cdot)  {\Xex}^{(i)}(\xi_{i})\Bigg ]\, . 
\end{align*}
Let us also note that, when asking 
for tree solutions to \eqref{fix:tree}, the excursion point of view of the recursive 
decomposition is technically preferable since it grants access to random excursions (and their 
corresponding encoded real trees) as well as to nodes sampled independently according to the 
mass measure using canonical external randomness. 

\begin{figure}[t] \label{fig:phi}
\begin{center}
\includegraphics[scale=.8]{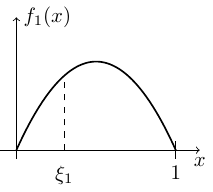}
\includegraphics[scale=.8]{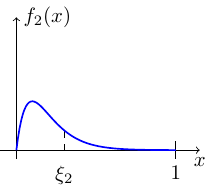}
\includegraphics[scale=.8]{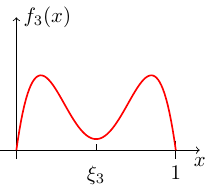}
\includegraphics[scale=.8]{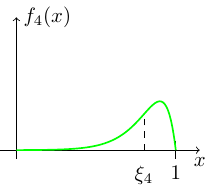}
\begin{minipage}{1.5cm}
\begin{tikzpicture}[xscale=8,yscale=1.5, domain=0:1]
\put(150,-100) {\includegraphics[width=1.2cm]{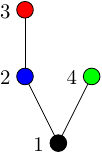} }   
\end{tikzpicture}
\end{minipage}
\end{center}
\begin{center}
\includegraphics[scale=.8]{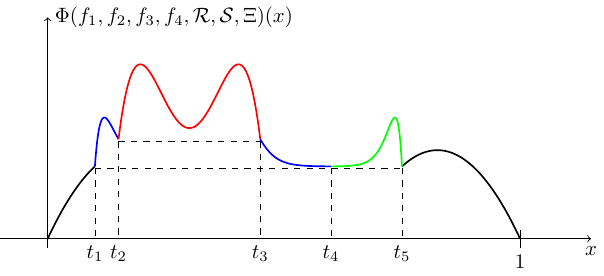}
\caption{\label{fig:comb_map}
An example of the functional construction of Section~\ref{sec:rec_exc}:
Here $K=4$, $L=6$, the structural tree $\Gamma$ is the tree shown in the center right. 
The functions $f_1,f_2,f_3,f_4$ are composed into $\Phi(f_1,f_2,f_3,f_4, \cR, \cS, \Xi)$. 
In order to keep the focus on the structure of the construction, we have not used scalings 
for the distances and the scaling $\cS=(0.35,0.20,0.30,0.15)$ for time. 
Observe that, in the tree encoded by $f_1$, the point corresponding to $\xi_1$ is not a leaf. 
The corresponding point of view using trees is depicted in Figure~\ref{fig:comb_tree}.}
\end{center}
\end{figure}

\medskip \noi\textsc{Examples.}\ \label{ex:examples}
{\bf 1)} An identity of the kind in \eqref{fix:X} holds for the Brownian 
excursion $\mathbf e$, and is of course intimately related to the corresponding decomposition of the Brownian
CRT. By \cite[Corollary 3]{al94}, we have
\begin{align} \mathbf e \stackrel{d}{=}  \Phi \big(\mathbf e^{(1)}, \mathbf e^{(2)}, \mathbf e^{(3)}, \Delta, \Delta, \Xi \big), \label{eqe}
\end{align}
with conditions as in \eqref{fix:X}, where $\Delta\sim \text{Dirichlet}(1/2,1/2,1/2)$, $\alpha = 1/2$, and
$\Gamma$ is the tree of size three with nodes $2$ and $3$ attached to the root.

{\bf 2)} The functional version of the fixed point equation appearing in Example {\bf 2)} of 
Section~\ref{sec:rec_trees} concerns a certain process $\sZ$ that satisfies
\begin{align}\label{eqZ}
\sZ \stackrel{d}{=}  \Phi \big(\sZ^{(1)}, \sZ^{(2)}, \Delta, \Delta, \Xi \big) \end{align}
with conditions as in \eqref{fix:X}, where $\Delta\sim \text{Dirichlet}(2,1)$ and $\alpha = (\sqrt{17}-3)/2$.

{\bf 3)} Similarly to {\bf 2)} above, there is a functional version to Example~{\bf 3)} of 
Section~\ref{sec:rec_trees}. The process $\sH$ involved satisfies the following modified fixed point
equation 
\begin{align}\label{eqH}
\sH \stackrel{d}{=}  \Phi \big(\sH^{(1)}, \sH^{(2)}, \Delta, (W,1-W), \Xi \big) \end{align}
with conditions as in \eqref{fix:X}, where $\Delta\sim \text{Dirichlet}(2,1)$, $W$ follows the uniform distribution on $[0,1]$, $\Delta, W$ are independent and $\alpha = 1/3$.

\subsection{Fractal properties of metric spaces} \label{subsec:fr}

Let $(S,d)$ be a metric space. (In this section, we do not assume the space to be compact.)
For $\delta > 0$ and a non-empty relatively compact set $B$ 
let $N_B(\delta)$ be the smallest number $m$ such that there exist
$m$ open balls of radius $\delta$ covering $B$. 
We define the \emph{lower 
Minkowski dimension} $\DimuM (B)$ and the \emph{upper Minkowski dimension} $\DimoM (B)$ by
\begin{align*}
\DimuM (B)  := \liminf_{\delta \to 0} \frac{\log N_B(\delta)}{-\log \delta}, \quad\text{and}\quad \DimoM (B)  := \limsup_{\delta \to 0} \frac{\log N_B(\delta)}{-\log \delta}. 
\end{align*}
If both values coincide, we simply call $\DimM (B) := \DimuM (B)$ the 
\emph{Minkowski dimension} of $B$. 

The \emph{Hausdorff dimension} of a set $A \subseteq S$ is defined using 
the family of (outer) Hausdorff measures $(H^s)_{s \geq 0}$ given by
\begin{align*}
H^s (A) 
:= \lim_{\delta \to 0} \Bigg\{ \inf \Bigg \{\sum_{i \geq 1} |U_i|^s: 
A \subseteq \bigcup_{i \geq 1} U_i \text{ and } |U_i| \leq  \delta 
\text{ for all } i \ge 1  \Bigg \} \Bigg \}.
\end{align*}
(The map $H^s: \mathbb K^\GH \to [0, \infty]$ is measurable, see Lemma~\ref{lem:Hs_meas} in the appendix for a proof.)
Here, for a set $U \subseteq S$, we let $|U| = \sup \{|x -y| : x, y \in U\}$. 
The Hausdorff dimension of $A \subseteq S$ is now defined by
$$\DimH (A) := \inf \{ s \geq 0: H^s(A) = 0 \},$$ 
where one should notice that $H^t(A) < \infty$ implies $H^s(A) = 0$ for $s > t$.
We will need the following version of the mass distribution principle 
(see, e.g.\ \cite[Proposition 4.9]{Falconer1990a} for a formulation in $\R^d$ which, 
together with its proof, applies analogously in any separable metric space). 
Let $B_r(x) := \{y \in S: d(x,y) < r\}$ denote the ball of radius $r$ around $x$.
Then, for a measurable set $A \subseteq S$, a finite measure $\nu$ on $S$ with 
$\nu(A) > 0$ and $c > 0$, we have
\begin{align} \label{massdistributionprinciple}
\limsup_{r \to 0} \nu(B_r(x))/r^s \leq c \: \text{ for all} \: x \in A 
\quad \Rightarrow \quad 
\DimH(A) \geq s. 
\end{align}
Lower and upper Minkowski dimension as well as Hausdorff dimension are invariant 
under bijective isometries. Furthermore, for a non-empty relatively compact set $B$, we have 
$\DimH (B) \leq \DimuM (B) \leq \DimoM (B).$

Recall that a function $f: [0,1] \to \R$ is $\omega$-H{\"o}lder continuous 
with $0 < \omega \leq 1$ (or H\"older with exponent $\omega$)
if there exists a constant $C > 0$ such that 
$$|f(x) - f(y)| \leq C |x-y|^\omega, \quad 0 \leq x,y \leq 1.$$
For $f \in \Cex$, we let $\omega_f$ be the supremum of all H{\"o}lder exponents over
excursions which are time-change equivalent to $f$. 

Next, for $p > 0$, the $p$-variation $[f]_p(t), t \in [0,1],$ of a function $f \in \Co$ is 
defined by
$$[f]_p(t) = \sup\left\{\sum_{i = 0}^{n-1} |f(t_{i+1}) - f(t_i)|^p 
: n \ge 1 \text{ and } 0 \le t_0 \leq \ldots \leq t_n \leq t\right\}\,.$$ 
Note that $[f]_p$ is monotonically increasing and, if $[f]_p(1) < \infty$, then $[f]_p \in \Co$. 
It is easy to see that $\inf \{p > 0 : [f]_p(1) < \infty\} \leq \omega_f^{-1}$. 
Furthermore, if $f$ is nowhere constant, the converse inequality follows from 
Theorem 3.1 in \cite{chisgalk}. Combining this with Theorem 3.1 in \citet{picard08} shows that, for any 
$f \in \Cex$ which is nowhere constant, we have
\begin{align} \label{aiz}
\DimoM (\cT_f) = \frac 1 {\omega_f}.
\end{align}
The last identity is used in Corollary \ref{cor:opt_Hol}.  

\section{Main results}\label{sec:results}

\subsection{Characterizations of solutions to \eqref{fix:tree} and \eqref{fix:X}}

Our first theorem clarifies the set of solutions to the fixed point equations \eqref{fix:tree} 
and \eqref{fix:X}. Recall that these involve the structural tree $\Gamma$, a probability distribution $\tau$ on $\Sigma_K^2$ and the 
value $\alpha \in (0,1)$ satisfying 
\eqref{def:alpha}.

\begin{thm} \label{thm:treeuniq}
Let $c > 0$. 
Then, 
\begin{compactenum}[i)]
\item there exists a unique continuous excursion ${\Xex}$ (in distribution) that 
satisfies \eqref{fix:X} and $\Ec{ {\Xex}(\xi)} = c$ where $\xi$ follows the uniform distribution on $[0,1]$ and $\Xex, \xi$ are independent;
\item for any $\bbK^\GHP$-valued r.v.\ $\fX$ satisfying 
\eqref{fix:tree} with  $\Ec{d(\rho, \zeta)} = c$, where $\zeta$ is chosen according to $\mu$ (given $\fX$), 
the random tree $\fT_{\Xex}$ encoded by $\Xex$, and  $(\text{supp}(\mu), d, \mu, \rho)$ have 
the same distribution. 
\end{compactenum} 
Furthermore, for all $m \geq 1$, we have $\E{\|{\Xex}\|^m}  < \infty$, and, almost surely,
\begin{compactenum}
\item[iii)] ${\Xex}(s) > 0$ for all $s \in (0,1)$;
\item[iv)] ${\Xex}$ is nowhere monotonic.
\end{compactenum} 
\end{thm}

Some comments are in order. First of all, as motivated by the formulation of point \emph{ii)}, there can exist further solutions to \eqref{fix:tree} which are not almost surely continuum trees, see Proposition \ref{prop:counter_ex} below and the example discussed following Theorem 1.6  in \cite{albgol2015}. 
Next, there does not exist any random compact rooted measured metric space 
with values in $\bbK^{\GHP}$ or 
$\bbK^{\GP}$ solving \eqref{fix:tree} for which one would have $\Ec{d(\rho, \zeta)} = \infty$.
Similarly, any solution to \eqref{fix:tree} with values in $\bbK^{\GP}$ or $\bbK^\GHP_{\tf}$ satisfying $\Ec{d(\rho, \zeta)} = 0$ must a.s.\ be the trivial space reduced to $\{\rho\}$.

Albenque and Goldschmidt \cite{albgol2015} show that, in the case of the Brownian CRT, the fixed point in Example {\bf 1)} of Section~\ref{sec:rec_trees} is attractive 
with respect to weak convergence on the space of probability measures on $\bbK ^{\GP}$. 
They also raise the question whether this was true with respect to the 
Gromov--Hausdorff--Prokhorov distance. Our next result confirms 
that it is indeed the case, under certain moment conditions. (Note however that the results 
in \cite{albgol2015} are not directly comparable to ours since the trees there are 
\emph{unrooted}.)

In the following, we let $\phi_{\GHP}: \cM_1(\bbK^{\GHP}) \to \cM_1(\bbK^{\GHP})$ be 
the map that to  $\aleph\in \cM_1(\bbK^\GHP)$ associates $\phi_{\GHP} (\aleph) = 
\Psi(\aleph, \tau)$, where $\tau$ is the given probability distribution on
$\Sigma_K^2$ and $\Psi$ is the map defined in \eqref{eq:def_Psi}. 
For $n \geq 1$, we write $\phi^n_{\GHP}$ for the $n$-th iterate of $\phi_{\GHP}$. 
Analogously, we define $\phi_{\GP}$ and $\phi_\GP^n, n \geq 1$ for probability measures 
on $\bbK^{\GP}$. Finally, for $\nu \in \cM_1(\bbK^{\GHP})$ or $\cM_1(\bbK^\GP)$, 
we write $\e_\nu$ for the expectation with respect to spaces sampled from $\nu$.

\begin{thm}\label{thm:unique_tree-GHP}
 Fix $c>0$ and let ${\Xex}$ be the unique continuous excursion (in distribution) satisfying \eqref{fix:X} with 
$\Ec{{\Xex}(\xi)} = c$ from Theorem \ref{thm:treeuniq}.
Then, we have the following two statements: 
\begin{compactenum}[i)]
\item if $\nu \in \mathcal M_1(\bbK^\GP)$ with $\e_\nu[d(\rho, \zeta)]=c$, 
then the sequence  of distributions $(\phi_{\GP}^n(\nu))_{n\ge 1}$ converges weakly to the law of $\fT_{\Xex}$.
\item if $\nu \in \mathcal M_1(\bbT^\GHP_\tf)$ 
with $\e_\nu[d(\rho, \zeta)]=c$ and $\e_\nu[\|\mathfrak T\|^{\fm}] < \infty$, where 
\begin{equation}\label{eq:def_fm}
\fm = 1+\lfloor 1/\alpha \rfloor = \min \Bigg\{ m \in \N : \sum_{i=1}^K \E{\mathcal R_i^{m\alpha}} < 1\Bigg\}\,,
\end{equation}
then the sequence of
distributions $(\phi_{\GHP}^n (\nu))_{n\geq 1}$ converges weakly to the law of $\fT_{\Xex}$.
\end{compactenum}
\end{thm}

Note that the assumptions in Theorem~\ref{thm:unique_tree-GHP} {\em i)}  are rather weak as 
no conditions on the probability distribution $\nu$ are imposed apart from 
$\e_\nu[d(\rho, \zeta)]=c$. For instance, if $\nu$ charges only two-point metric 
spaces $\{\rho,x\}$ such that $d(\rho,x)=c$, the theorem applies. Although the moment 
condition in Theorem~\ref{thm:unique_tree-GHP} {\em ii)} is probably not optimal,
one certainly needs some condition as demonstrated by the following proposition. 

\begin{prop}\label{prop:counter_ex}
Let $c>0$. Fix $\Gamma$ and let $\eta$ be a probability distribution on $\Sigma_K$. 
Then, there exists a distribution $\tau$ on $\Sigma_K^2$ such that
$\tau(\Sigma_K\times \cdot\,) = \eta(\cdot)$, and furthermore, with $\alpha$ as in \eqref{def:alpha},
\begin{compactenum}[i)]
\item  there exist infinitely many mutually singular fixed points of \eqref{fix:tree} on 
$\bbK^\GHP$  including some that are almost surely not real trees, such that, 
writing $\nu \in \cM_1(\bbK^\GHP)$ for such a distribution, and $\fX=(\cX,d,\mu,\rho)$ 
distributed according to $\nu$, we have $\e_{\nu}[d(\rho,\zeta)]=c$ and
$\Ec{ \|\fX\| ^{1/\alpha}} =\infty$;
\item there exists $\nu \in \cM_1(\bbK^\GHP) $ concentrated on $\bbT^{\GHP}_\tf$ such 
that $\e_{\nu}[d(\rho,\zeta)]=c$, $\e_{\nu}[\|\fT\|^{1/\alpha}]=\infty$ and
$(\phi_\GHP^n(\nu))_{n\ge 1}$ does not converge weakly to the law of $\fT_{\Xex}$ with
$\Xex$ as in Theorem~\ref{thm:unique_tree-GHP}.
\end{compactenum}
\end{prop}

Let us comment on {\em ii)} above: According to Theorem~\ref{thm:unique_tree-GHP} 
{\em i)}, under these conditions, one has convergence in the sense of Gromov--Prokhorov 
to $\fT_\Xex$, where $\Xex$ is the random excursion of Theorem~\ref{thm:unique_tree-GHP}; 
however, as one iterates $\phi_\GHP$, the support of the mass measure converges 
to $\fT_\Xex$, but some of the branches of the tree are drained of their mass without 
becoming shorter (see the proof on page~\pageref{pf:counter_ex} for details).

\subsection{Geometry, fractal dimensions and optimal H\"older exponents} \label{subsec:main2}

It is informative to first present a heuristic argument that, at the very least, gives an 
idea of the value of the Minkowski dimension that one should expect. 
Consider $\fT = (\sT, d, \mu, \rho)$ 
satisfying \eqref{fix:tree}. In a covering of $\sT$ by open balls, if one neglects the contribution of 
the balls that might intersect more than one subtree in the recursive decomposition, the fixed point 
equation \eqref{fix:tree} should imply that we approximately have
\begin{align}\label{heuris}
N_{\sT} (\delta) 
\approx & \sum_{i=1}^K N_{\mathcal R_i^\alpha \sT_{i}}(\delta) 
= \sum_{i=1}^K N_{\sT_{i}}(\mathcal R_i^{-\alpha} \delta). 
\end{align}
In particular, if one roughly has $N_{\sT}  (\delta) \approx \delta^{-s}$ for some $s>0$, then 
it should be the case that $1 = \sum_{i=1}^K \cR_i^{\alpha s}$, and thus the constant $s$ 
should be given by $s = \alpha^{-1}$. We now provide results that justify that this is 
indeed the case under some conditions which happen to be satisfied in most examples.

In the following theorem and subsequently, we use the  generic random variable
$\xoverline { \cR}$ which is distributed like $\cR_I$ with $I$  independent of $\cR$
and uniformly chosen among $1, \ldots, K$. 

\begin{thm}[Upper bound on $\DimoM$] \label{thm:main2}
Assume that $\fT = (\sT, d, \mu, \rho)$ satisfies \eqref{fix:tree} with values in
$\bbT_\tf^\GHP$. Moreover, assume that $\xoverline{\cR}$ admits a density on $(0,1)$ (although an atom at $0$ is allowed).
Then, almost surely, $$\DimoM (\sT) \leq 1/  \alpha.$$  
\end{thm}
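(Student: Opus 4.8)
The plan is to bound $N_{\sT}(\delta)$ by iterating the recursive decomposition many times and controlling the accumulated rescaling factors. First I would fix a small $\delta>0$ and, starting from the fixed-point equation \eqref{fix:tree}, expand the tree $\sT$ into pieces indexed by words in $[K]^*$: after one step, $\sT$ is covered (up to a bounded number of exceptional balls near the gluing points) by the rescaled subtrees $\cR_i^\alpha \sT_i$; after $n$ steps, by subtrees $\prod \cR^\alpha \cdot \sT_w$ over words $w$ of length $n$. For each leaf-word of the expansion I would stop as soon as the accumulated scaling factor $\beta_w^\alpha := \prod_{k} \cR^\alpha$ drops below $\delta^{1-\epsilon}$ (a stopping line in the decomposition tree), so that the corresponding subtree, having bounded height times $\beta_w^\alpha$, can be covered by a number of $\delta$-balls that is $O(1)$ once we also use the moment bound $\E{\|\sT\|^m}<\infty$ from Theorem \ref{thm:treeuniq} (i.e. of point ii) applied to the encoding excursion). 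The total count $N_{\sT}(\delta)$ is then at most a constant times the number of stopped subtrees plus the exceptional balls straddling gluings, the latter being controlled by the number of internal nodes of the stopped decomposition tree, which is of the same order.

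The crux is therefore to estimate the expected size of the stopped branching structure, i.e.\ the number of words $w$ with $\beta_w^\alpha \geq \delta^{1-\epsilon}$ but $\beta_{w'}^\alpha < \delta^{1-\epsilon}$ for the parent $w'$. Writing $\log(1/\beta_w^\alpha) = \sum_k \log(1/\cR^\alpha)$ along the branch, this is a classical first-passage / branching-random-walk question: the number of particles of a killed branching random walk that first cross a level of height $\log(1/\delta^{1-\epsilon})$. The expected number of such particles is governed by the Laplace transform $m(\theta) := \E{\sum_{i=1}^K (\cR_i^\alpha)^\theta} = \E{\sum_{i=1}^K \cR_i^{\alpha\theta}}$, which by the choice \eqref{def:alpha} of $\alpha$ equals $1$ at $\theta = 1$; one shows the expected number of stopped particles is at most $\delta^{-(1-\epsilon)\theta}$ for any $\theta$ with $m(\theta)\le 1$, hence essentially $\delta^{-(1-\epsilon)}$. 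Combined with a Borel--Cantelli argument along a geometric sequence $\delta_n = 2^{-n}$ (using Markov's inequality on the stopped count and on $\max_w \|\sT_w\|$ to pass from the expectation bound to an almost-sure bound), this yields $\DimoM(\sT)\le 1/\alpha$ almost surely.

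The remaining point --- and this is where the density hypothesis $g(t)\le C t^{\gamma-1}$ with $KC>\gamma>0$ is used --- is that the naive exponent $1/\alpha$ must be replaced by $(KC-\gamma)/\alpha$. The issue is that the above sketch implicitly assumed the moment $\E{\sum_i \cR_i^{\alpha\theta}}$ is finite and behaves well for $\theta$ slightly below $1$; but to cover a stopped subtree by $O(1)$ balls one actually needs a uniform-in-$w$ control on $\|\sT_w\|$, and more importantly one must bound how many subtrees can have an \emph{unusually small} accumulated scaling. Small values of $\cR_i$ are exactly what the density bound controls: under $g(t)\le Ct^{\gamma-1}$ one computes $\E{\xoverline{\cR}^{-u}} = \int_0^1 t^{-u} g(t)\,dt < \infty$ precisely for $u<\gamma$, and more refined tail estimates give that the relevant Laplace transform $\E{\sum_i \cR_i^{\alpha\theta}}$ stays $\le 1$ (or grows in a controlled way) only down to $\theta$ of order $1 - (\text{something involving } KC-\gamma)$; optimizing the first-passage bound over the admissible range of $\theta$ then produces the exponent $(KC-\gamma)/\alpha$ rather than $1/\alpha$. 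I expect the main obstacle to be exactly this optimization: carefully identifying the largest power of $\delta$ for which the expected stopped-particle count (with the genuine, possibly heavy-tailed, step distribution coming from $\log(1/\cR^\alpha)$) is summable, and checking that the density assumption is exactly what makes the argument close with the claimed exponent. Everything else --- the decomposition bookkeeping, the treatment of gluing balls, the Borel--Cantelli passage to an almost-sure statement --- is routine given the moment bounds already established.
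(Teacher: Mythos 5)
The claim that $m(\theta) := \E{\sum_{i=1}^K \cR_i^{\alpha\theta}}$ equals $1$ at $\theta = 1$ ``by the choice \eqref{def:alpha} of $\alpha$'' is incorrect, and it is at the crux of your sketch. Equation \eqref{def:alpha} normalizes the \emph{size-biased} quantity $\E{\sum_i \cR_i^\alpha\, \I{J\in\Gamma_i}}$, which involves both $\cR$ and $\cS$ (through $J$); it does \emph{not} say $\E{\sum_i\cR_i^\alpha} = 1$. Concretely, for the Brownian CRT example $K=3$, $\alpha=1/2$, $\xoverline{\cR}\sim\mathrm{Beta}(1/2,1)$, and $m(1) = 3\E{\xoverline{\cR}^{1/2}} = 3/2\ne 1$. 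The identity that is actually available is $m(1/\alpha) = \E{\sum_i\cR_i} = 1$, which holds for the trivial reason that $\cR\in\Sigma_K$ and is independent of \eqref{def:alpha}. Since your plan pivots on the wrong zero of the Laplace transform, the optimization you explicitly flag as the ``main obstacle'' does not close with the exponent you want, and the remaining sentence about tail estimates of $\E{\xoverline{\cR}^{-u}}$ ``giving'' $(KC-\gamma)/\alpha$ is a guess rather than a derivation.

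Beyond that, your stopping-line/killed-branching-random-walk route is a genuinely different strategy from the paper's. Executed carefully it would require a first-passage analysis for a walk whose steps $-\log\xoverline{\cR}$ have exponential moments only up to order $\gamma$, control of the overshoot, and control of the covering numbers $N_{\sT^w}(\delta/\cV(w)^\alpha)$ of each stopped subtree (which is not automatically $O(1)$, since a stopped subtree is a copy of $\sT$ seen at a scale comparable to $\delta$). The paper bypasses all of this. From the deterministic sandwich $N_\cT(x)\le\sum_{i=1}^K N_{r_i^\alpha\cT_i}(x)\le N_\cT(x)+K$ it takes expectations to get $v(x):=\E{N_\sT(x)}\le K\E{v(\xoverline{\cR}^{-\alpha}x)}$; under the density bound this becomes the integral inequality $v(x)\le KC\alpha^{-1}x^{\gamma/\alpha}\int_x^\infty t^{-\gamma/\alpha-1}v(t)\,dt$, and a one-line Gronwall argument yields $v(x)=O(x^{(\gamma-KC)/\alpha})$. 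Finiteness of $v$ is established separately by a crude level-$n$ expansion (Lemma~\ref{lem:M-finite} --- which \emph{is} of the stopping-line flavour you describe, but is only asked to give finiteness, not a rate), and then Markov plus Borel--Cantelli along $\delta_n=2^{-n}$ gives the almost-sure statement. The Gronwall route produces the exponent $(KC-\gamma)/\alpha$ directly, without any first-passage or overshoot analysis, and it shows transparently where the density hypothesis enters.
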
 


\begin{thm}[Lower bound on $\DimH$]\label{thm:main3}
Suppose that $\fT = (\sT, d, \mu, \rho)$  satisfies \eqref{fix:tree} with values 
in $\bbT_\tf^\GHP$, \sloppy that $\pc{\sT \neq \{\rho\}} > 0$,  and, for some 
$\delta > 0$, $\Ec{\xoverline {\cR}^{-\delta}} < \infty$.  
Then $$\DimH (\sT) \geq 1/\alpha.$$
\end{thm}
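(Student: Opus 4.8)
The plan is to apply the mass distribution principle \eqref{massdistributionprinciple} to the mass measure $\mu$ on $\sT$, so it suffices to show that, almost surely, for $\mu$-almost every $x$ (equivalently, for a $\mu$-typical point $\zeta$ sampled according to $\mu$), one has $\limsup_{r\to 0}\mu(B_r(x))/r^{s}<\infty$ for every $s<\alpha^{-1}$. By Fubini, it is enough to prove the corresponding statement in expectation/probability for the single random point $\zeta$ with law $\mu$; that is, to control the decay of $\mu(B_r(\zeta))$ as $r\to 0$. The key idea is that, along the recursive decomposition \eqref{fix:tree}, a ball of radius $r$ around $\zeta$ sits inside one of the rescaled subtrees, and its mass shrinks multiplicatively. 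Concretely, if $\zeta$ lands in block $i$ (which happens with probability $\cS_i$ given $(\cR,\cS)$), then the ball $B_r(\zeta)$ in $\sT$ is contained in the image of the ball of radius $r\cR_i^{-\alpha}$ around the corresponding point $\zeta^{(i)}$ in $\sT_i$, and $\mu(B_r(\zeta)) \le s_i\,\mu_i(B_{r\cR_i^{-\alpha}}(\zeta^{(i)}))$ — up to a correction for the $O(1)$ subtrees hanging along the spine $E_i$, which contribute a bounded number of extra rescaled balls and only worsen constants.

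Iterating this decomposition $n$ times, $\zeta$ follows a random spine through a sequence of nested subtrees; the radius is inflated by a product of factors $\cR^{-\alpha}$ (one generic copy $\xoverline{\cR}$ at each step, since the block is chosen according to $\cS$ and the branching data $(\cR,\cS)$ are i.i.d.\ across levels), while the mass is deflated by the product of the corresponding $s$-factors. The natural quantity to track is
\[
Y_n \;=\; \log \mu(B_{r}(\zeta)) \;-\; s\log r
\]
evaluated after $n$ levels of recursion with the radius renormalised; the multiplicative structure turns this into an additive random walk whose step has mean $\mathbf E[\log s_i - \alpha s\log \cR_i]$ summed appropriately, which by the defining relation \eqref{def:alpha} of $\alpha$ is negative precisely when $s<\alpha^{-1}$ (this is the continuous analogue of the heuristic $1=\sum \cR_i^{\alpha s}$ in \eqref{heuris}). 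The moment hypothesis $\Ec{\xoverline{\cR}^{-\delta}}<\infty$ guarantees that the steps have a finite exponential moment of some order, so that large-deviation/Borel--Cantelli estimates give that $\mu(B_r(\zeta)) \le c\, r^{s}$ for all small $r$ along the geometric sequence of scales $r=\cR^{(1)\alpha}\cdots\cR^{(n)\alpha}$, and monotonicity of $r\mapsto\mu(B_r(\zeta))$ fills the gaps between consecutive scales. Combined with the mass distribution principle this yields $\DimH(\sT)\ge s$ for every $s<\alpha^{-1}$, hence $\DimH(\sT)\ge\alpha^{-1}$.

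The main obstacle — and the place where care is genuinely needed — is the "boundary effect'': a ball around $\zeta$ of radius $r$ need not be contained in a single first-level subtree, since $\zeta$ may lie near a gluing point $\eta_j$, and moreover the spine contributions $\sum_{j\in E_i}$ mean that $B_r(\zeta)$ picks up mass from $O(1)$ sibling subtrees at every level. One must show these contributions are harmless: at each level they add at most $|\Gamma|$ extra balls of comparable rescaled radius, so over $n$ levels the bookkeeping still closes because $n$ grows only logarithmically in $1/r$ and the extra factors are polynomial in $n$, swamped by the exponential (geometric) decay coming from the negative-drift random walk. A clean way to organise this is to first prove a one-step recursive inequality for the (random) function $r\mapsto \mathbf E[\mu(B_r(\zeta))^{-\theta}]^{-1/\theta}$ or for $\pc{\mu(B_r(\zeta)) > r^{s}}$, then unfold it; the finiteness of negative moments of $\xoverline{\cR}$ is exactly what is needed to keep these recursive functionals finite. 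I also expect to need the already-established fact (Theorem~\ref{thm:treeuniq}) that the solution has all moments of $\|\sT\|$ finite and satisfies $\kappa_\delta(\fT)>0$, which controls the base case of the recursion and ensures $\mu$ is genuinely spread out at macroscopic scales.
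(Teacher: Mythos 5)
There is a genuine gap, and it is exactly the phenomenon this paper was designed to overcome. You apply the mass distribution principle to the \emph{natural} mass measure $\mu$, and you track $\mu(B_r(\zeta))$ through the recursion: mass deflates by $\cS$-factors while the radius inflates by $\cR^{-\alpha}$-factors. For this to close at exponent $1/\alpha$ you would need the typical product of the $\cS$-factors to match the typical product of the $\cR$-factors, i.e.\ $\mathbf{E}[\log\cS_J]=\mathbf{E}[\log\cR_J]$. But the defining relation~\eqref{def:alpha} is a \emph{moment} identity, $\mathbf{E}\big[\sum_i\beta_i^\alpha\big]=1$, not a log-moment identity, and it does not imply the equality of those expected logarithms. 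Your assertion that the drift ``is negative precisely when $s<\alpha^{-1}$ by the defining relation of $\alpha$'' is therefore unsupported; when $\cR$ and $\cS$ are decoupled (as in the homogeneous lamination example $\sH$, with $\cS\sim\mathrm{Dirichlet}(2,1)$ and $\cR$ uniform) the naive mass measure only yields the strictly weaker exponent $\mathbf{E}[\log\cS_J]/(\alpha\,\mathbf{E}[\log\cR_J])$, which is $<1/\alpha$ in general. This is precisely the obstruction that the abstract flags: ``the natural mass measure on the real tree only provides weak estimates on the Hausdorff dimension.''

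The paper's proof fixes this by \emph{replacing} $\mu$ with a tailored measure $\bar\mu=\mu^\emptyset\circ\pi_{\cX}^{-1}$, built in Proposition~\ref{prop:time_change} as an almost-sure fixed point of a companion recursion so that $\bar\mu$-mass of the level-$n$ cell containing $\zeta$ equals $\cV(\vartheta(\zeta))=\prod_k\cR^{(k)}_{J_k}$ rather than $\prod_k\cS^{(k)}_{J_k}$. With this choice the mass of a subtree and its diameter scale by the \emph{same} multiplicative cocycle raised to powers $1$ and $\alpha$, so $\bar\mu(B_r(\zeta))\approx r^{1/\alpha}$ comes out for free, and the remaining work is purely a boundary/concentration estimate: a two-sided Cram\'er bound keeping $\cV(\vartheta(\zeta))$ between $r^{\eta/\alpha}$ and $r^\gamma$ (this is where $\Ec{\xoverline{\cR}^{-\delta}}<\infty$ is used), together with the small-ball estimates of Lemma~\ref{lem:easycond} to control the event that $\zeta$ sits within distance $r$ of the exit points $M(\vartheta(\zeta))$. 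The random-walk-with-negative-drift picture you describe is qualitatively the right shape, but without the change of measure it lands on the wrong exponent; and the boundary-effect bookkeeping is handled by bounding $\#M(\vartheta(\zeta))\le n$ and choosing $n\asymp\log(1/r)$, rather than by the ``extra balls of comparable radius'' heuristic, which is itself harder to make precise because the sibling subtrees hanging off the spine carry genuinely random heights. You should rebuild the argument around the existence of the measure in Proposition~\ref{prop:time_change} (or an equivalent tilting of the mass measure) before the rest of your outline can work.
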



\medskip
In the next result, recall that two excursions $f$ and $g$ are said to be equivalent if
$f = g \circ \phi$ for a function $\phi : [0,1] \to [0,1]$ that is continuous and strictly 
increasing. 
\begin{cor}[Optimal H\"older exponents]\label{cor:opt_Hol}
Suppose that ${\Xex}$ satisfies \eqref{fix:X}, that $\pc{\| {\Xex} \| > 0} > 0$ and that 
the conditions of Theorems~\ref{thm:main2} and~\ref{thm:main3} are satisfied. Then, almost surely:
\begin{compactenum}[(a)]
    \item $\DimH(\cT_{\Xex})= \DimuM(\cT_{\Xex}) = \DimoM(\cT_{\Xex}) = \alpha^{-1}$;
    \item for any $\gamma < \alpha$, there exists a process $\tilde \Xex$ which is equivalent to ${\Xex}$ and has 
    $\gamma$-\Hoelder continuous paths. 
        \item for any $\gamma > \alpha$ and any distribution $\tilde \nu$ on $\Cex$ such that, for $\Law(\tilde \Xex) = \tilde \nu$, $\cT_{\tilde\Xex}$ and $\sT_{\Xex}$ have the same distribution, $\tilde \Xex$ has $\gamma$-\Hoelder continuous paths with probability zero. 
\end{compactenum}
\end{cor}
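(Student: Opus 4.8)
The plan is to read off part (a) directly from Theorems~\ref{thm:main2} and \ref{thm:main3}, and then to deduce parts (b) and (c) from (a) together with the identity \eqref{aiz} and the no-where monotonicity of $X$ established in Theorem~\ref{thm:treeuniq}. For (a), I would first note that, since $X$ solves \eqref{fix:X}, the encoded tree $\fT_X=(\cT_X,d_X,\mu_X,\rho_X)$ lies in $\mathbb T^{\GHP}_{\tf}$ and solves \eqref{fix:tree}; moreover $X\ge 0$ and $X(0)=0$ give $d_X(\pi_X(t),\rho_X)=X(t)$, so $\|\cT_X\|=\|X\|$ and thus $\pc{\|\cT_X\|>0}>0$. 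With $C=\gamma=(K-1)^{-1}$ the density hypothesis of Theorem~\ref{thm:main2} holds and $KC-\gamma=1$, so that theorem gives $\DimoM(\cT_X)\le\alpha^{-1}$ almost surely; the same density bound gives $\Ec{\xoverline{\cR}^{-\delta}}\le C\int_0^1 t^{\gamma-1-\delta}\,dt<\infty$ for $0<\delta<\gamma$, so Theorem~\ref{thm:main3} applies and gives $\DimH(\cT_X)\ge\alpha^{-1}$ almost surely. Since $\cT_X$ is compact and nonempty, $\DimH(\cT_X)\le\DimuM(\cT_X)\le\DimoM(\cT_X)$, and sandwiching forces all three dimensions to equal $\alpha^{-1}$.

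For (b), Theorem~\ref{thm:treeuniq}(iv) gives that $X$ is almost surely no-where monotonic, in particular no-where constant, so \eqref{aiz} applies and, combined with (a), yields $\alpha_X=\alpha$ almost surely; hence $\omega_X=\alpha_X^{-1}=\alpha^{-1}$. I would then produce the required process via a $p$-variation time change: for any $p>\alpha^{-1}$ one has $[X]_p(1)<\infty$ with $[X]_p$ continuous, so $\psi(t):=(t+[X]_p(t))/(1+[X]_p(1))$ is a continuous strictly increasing self-bijection of $[0,1]$, and the superadditivity bound $[X]_p(b)-[X]_p(a)\ge|X(b)-X(a)|^p$ shows that $\tilde X:=X\circ\psi^{-1}\in\Cex$ is equivalent to $X$ and $(1/p)$-\Hoelder with the a.s.\ finite constant $(1+[X]_p(1))^{1/p}$. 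Given $\gamma<\alpha$, choosing $p\in(\alpha^{-1},\gamma^{-1})$ makes $\tilde X$ $(1/p)$-\Hoelder, hence $\gamma$-\Hoelder on $[0,1]$; since $\tilde X$ is a measurable functional of $X$, it is a genuine process.

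For (c), fix $\gamma>\alpha$ and a $\Cex$-valued random variable $\tilde X$. On the event that $\tilde X$ is $\gamma$-\Hoelder, say with constant $\kappa$, the estimate $d_{\tilde X}(\pi_{\tilde X}(x),\pi_{\tilde X}(y))=\tilde X(x)+\tilde X(y)-2\inf_{[x\wedge y,\,x\vee y]}\tilde X\le 2\kappa|x-y|^\gamma$ makes the canonical surjection $\pi_{\tilde X}\colon[0,1]\to\cT_{\tilde X}$ $\gamma$-\Hoelder; covering $[0,1]$ by $O(1/\varepsilon)$ intervals of length $\varepsilon$ and pushing forward yields a cover of $\cT_{\tilde X}$ by $O(1/\varepsilon)$ balls of radius $O(\varepsilon^\gamma)$, so $\DimoM(\cT_{\tilde X})\le 1/\gamma<1/\alpha$. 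As $\DimoM$ is an isometry invariant, on the further event $\{\cT_{\tilde X}=\cT_X\}$ this contradicts part (a); hence the event $\{\tilde X\text{ is }\gamma\text{-\Hoelder}\}\cap\{\cT_{\tilde X}=\cT_X\}$ is null.

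The main obstacle is part (b): the content is to exhibit an honest \emph{process} with $\gamma$-\Hoelder paths, rather than, for each sample point, an arbitrary equivalent excursion. This is exactly what forces the explicit variation time change and the verification that it depends measurably on $X$, and it rests on the a.s.\ identity $\omega_X=\alpha^{-1}$, which itself comes from part (a), \eqref{aiz}, and the no-where constancy supplied by Theorem~\ref{thm:treeuniq}. Parts (a) and (c) are then routine given the main theorems.
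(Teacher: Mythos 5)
Your argument is correct, and for parts (a) and (c) it coincides with the paper's own proof: (a) is the sandwich between Theorem~\ref{thm:main2} with $KC-\gamma=1$ and Theorem~\ref{thm:main3} (the moment hypothesis $\Ec{\xoverline{\cR}^{-\delta}}<\infty$ following from the density bound), and (c) is the same contradiction with (a) via the elementary bound $\DimoM(\cT_{\tilde X})\le 1/\gamma$ for a $\gamma$-\Hoelder excursion.

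The one point of genuine difference is part (b). The paper simply invokes ``by definition of $\alpha_f$ \ldots there exists a process $\tilde X$,'' which, read literally, only produces for each sample point $\omega$ \emph{some} equivalent $\gamma$-\Hoelder excursion and does not by itself produce a measurable choice $\omega\mapsto\tilde X(\omega)$. You close this by an explicit construction: using part (a), \eqref{aiz} and the nowhere-constancy from Theorem~\ref{thm:treeuniq} to get $\alpha_X=\alpha$ and hence $\omega_X=\alpha^{-1}$ a.s., you take $p\in(\alpha^{-1},\gamma^{-1})$ so that $[X]_p(1)<\infty$ a.s., set $\psi(t)=(t+[X]_p(t))/(1+[X]_p(1))$ and $\tilde X=X\circ\psi^{-1}$, and use the increment bound $[X]_p(y)-[X]_p(x)\ge|X(y)-X(x)|^p$ to get $|\tilde X(u)-\tilde X(v)|\le\bigl(1+[X]_p(1)\bigr)^{1/p}|u-v|^{1/p}$. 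This is a worthwhile tightening of the paper's argument: it gives a measurable construction of $\tilde X$ as a functional of $X$, together with an explicit \Hoelder constant.
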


Finally, we have the following results about the degrees in $\fT_{\Xex}$. Recall 
that the degree of a point in a real tree is defined as the number of connected components in which the space decomposes upon removal of the point. 
Let $\sD(\cT_{\Xex})$ be the (random) set of degrees of $\cT_{\Xex}$. 
Let also $\sD(\Gamma)=\{1+\#\{j:\varpi_j=i\}: 1\le i\le K\}$; $\sD(\Gamma)$ is the 
set of degrees in the tree obtained from $\Gamma$ by connecting an additional node to its root.
Observe that $1\in \sD(\Gamma)$, but that it is possible that $2$ is not an element 
of $\sD(\Gamma)$. 

\begin{prop}\label{prop:degrees} Let $0 < c < \infty$ and ${\Xex}$ be the unique solution (in distribution) of \eqref{fix:X} in Theorem~\ref{thm:treeuniq} with $\Ec{{\Xex}(\xi)} = c$ and consider $\fT_\Xex=(\cT_{\Xex}, d_{\Xex}, \mu_{\Xex}, \rho_{\Xex})$. Then, 
almost surely, 
\begin{compactenum}[i)]
\item the root $\rho_{\Xex}$ is a leaf;
\item a point sampled from $\mu_{\Xex}$ is a leaf;
\item $\mu_{\Xex}$ has no atoms;
\item the set $\sD(\cT_{\Xex})$ of degrees of points in $\cT_{\Xex}$ is fully determined by $\sD(\Gamma)$: we have
$$
\sD(\cT_{\Xex})=
\left\{
\begin{array}{ll}
\{1,2,3\} & \text{if } \sD(\Gamma)=\{1,2\}\\
\sD(\Gamma)\cup \{2\}& \text{otherwise}\,.
\end{array}
\right.
$$
\end{compactenum}
\end{prop}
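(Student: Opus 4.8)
\emph{Overall plan.} I would establish the four assertions in the order (i), (iii), (ii), (iv), each using the earlier ones. Throughout I use that, by Theorem~\ref{thm:treeuniq}, almost surely $X\in\Cex^*$ (being no-where monotone), $X>0$ on $(0,1)$, and $\E{\|X\|^m}<\infty$ for all $m$; I write $\pi:=\pi_X$, $\mu:=\mu_X$, $\rho:=\rho_X$, and I use the decompositions \eqref{fix:tree} and \eqref{fix:X}. \emph{Part (i):} since $X(0)=0<X(s)$ for $s\in(0,1)$ one has $d_X(0,s)=X(s)>0$, so $\pi^{-1}(\rho)=\{0,1\}$; hence $\cT_X\setminus\{\rho\}=\pi((0,1))$ is a continuous image of a connected set, so it is connected, and it is non-degenerate as $X$ is no-where constant. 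Thus $\rho$ is a leaf, and moreover $\mu(\{\rho\})=\Leb(\pi^{-1}(\rho))=0$ --- a fact I use below.

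\emph{Part (iii).} Let $A:=\E{\sum_{x\in\cT_X}\mu(\{x\})^2}\in[0,1]$. Reading off the construction in \eqref{fix:tree}, every $x\in\cT_X$ is either the image of a unique non-gluing point of exactly one copy, or one of the gluing vertices $\varphi(\eta_j)$, $j\in\Gamma^o$, whose mass equals $\cS_j\mu_j(\{\eta_j\})+\sum_{i:\varpi_i=j}\cS_i\mu_i(\{\rho_i\})$. Using $\mu_i(\{\rho_i\})=0$ a.s.\ (part (i) applied to the i.i.d.\ copies), expanding the squares kills all cross-terms, and the corrections coming from the points $\eta_i$, $i\in\Gamma^o$, cancel exactly, leaving the a.s.\ identity $\sum_x\mu(\{x\})^2=\sum_{i=1}^K\cS_i^2\sum_{y}\mu_i(\{y\})^2$. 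Taking expectations, and using that $\cS$ is independent of the copies with $\mu_i\stackrel{d}{=}\mu$, gives $A=\bigl(\sum_i\E{\cS_i^2}\bigr)A$; since $\cS_1+\dots+\cS_K=1$ with every $\cS_i\in(0,1)$ we have $\sum_i\cS_i^2<1$ a.s., hence $A=0$. So $\mu$ has no atoms a.s., which is (iii) (condition \textbf{C2} for $\fT_X$).

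\emph{Part (ii).} This is exactly condition \textbf{C3} ($\mu(\sL_X)=1$), i.e.\ that a $\mu$-point is a.s.\ a leaf. Since $\mu$ has full support (\textbf{C1}), each branch direction at a point $z$ is a nonempty open set, hence has positive $\mu$-mass; so $\Prob{\deg(\zeta)\ge2}>0$ for a $\mu$-point $\zeta$ would force $q:=\Prob{\zeta\in\llb\zeta_1,\zeta_2\rrb\setminus\{\zeta_1,\zeta_2\}}>0$ for three i.i.d.\ $\mu$-points $\zeta,\zeta_1,\zeta_2$. I would then show $q=0$ by a contraction on \eqref{fix:tree}: conditioning on $(\cR,\cS)$ and on the copies into which the three points fall, if all three fall in the same copy $i$ (conditional probability $\cS_i^3$) the event reproduces itself in a fresh copy; otherwise either $\zeta_1,\zeta_2$ lie in a common copy (so $\llb\zeta_1,\zeta_2\rrb$ stays inside it and $\zeta$, in another copy and --- by (iii) --- not a gluing vertex, is not on it), or $\llb\zeta_1,\zeta_2\rrb$ leaves a copy through a single gluing vertex, reducing the event to one of finitely many analogous quantities such as $\Prob{\zeta\in\llb\zeta_1,\rho\rrb\setminus\{\zeta_1,\rho\}}$. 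The resulting finite linear system $\mathbf q\le M\mathbf q$ has a matrix $M$ of spectral radius $<1$, since its relevant weights are built from $\sum_i\E{\cS_i^2}$ and $\sum_i\E{\cS_i^3}$, each $<1$; hence $q=0$ and $\zeta$ is a.s.\ a leaf.

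\emph{Part (iv).} Recall that for $f\in\Cex^*$ and $u\in(0,1)$ the components of $\cT_f\setminus\{\pi_f(u)\}$ are the $\#\pi_f^{-1}(\pi_f(u))-1$ excursions of $f$ strictly above level $f(u)$ cut out by consecutive preimages, together with the one rootward direction, so $\deg(\pi_f(u))=\#\pi_f^{-1}(\pi_f(u))$; also $\sB_f$ and $\sL_f$ are dense, $\sS_f$ is uncountable, and $\sB_f$ is countable. For $\sD(\Gamma)\cup\{2\}\subseteq\sD(\cT_X)$: $\rho$ realises $1\in\sD(\Gamma)$; for $j\in\Gamma^o$ the vertex $\varphi(\eta_j)$ has $\deg=\deg_{\cT_j}(\eta_j)+\#\{i:\varpi_i=j\}=1+\#\{i:\varpi_i=j\}$ a.s.\ (by (ii) $\eta_j$ is a leaf, by (i) each child's root is), realising $\sD(\Gamma)\setminus\{1\}$; and $2\in\sD(\cT_X)$ because $\sS_X$ is uncountable while $\sB_X$ is countable; finally $\sB_X\ne\varnothing$ gives some degree $\ge3$, pinned to $3$ in the case $\sD(\Gamma)=\{1,2\}$ by the upper bound. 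For the reverse inclusion, which is the technical heart, I would trace a point $x$ of degree $d$ through \eqref{fix:X}: at each level $x$ is either interior to a unique sub-copy, where its degree is unchanged, or it lands on a gluing vertex of that level, giving $d=1+\#\{\text{children of the corresponding }j\in\Gamma^o\}\in\sD(\Gamma)$ a.s.\ (by (ii) for that copy; and there are only countably many gluing vertices over all levels, so this holds simultaneously). It remains to handle points interior at \emph{every} level. Here the plan is to combine a Borel--Cantelli consequence of (iii) --- a fresh attach point a.s.\ never coincides with a previously created branch point or with another attach point, so a gluing never raises an already-positive degree --- with a scaling analysis of the interval $C_n$ spanned by the (at most $d$) level-$n$ preimages of $x$: at each level either $C_n$ sits in a single linearity piece of the copy's interval, whence $C_{n+1}$ is strictly dilated (impossible infinitely often, as $|C_n|\le1$ while the relevant piece has length $<1$ and does not tend to $1$), or $x$ lies on the spine of that copy joining its two gluing vertices; iterating, one reduces to bounding the degree of points lying on nested spinal segments, which one shows is $\le3$ (and $=3$ only when $\Gamma$ is a path). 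Together with the lower bound this yields $\sD(\cT_X)=\sD(\Gamma)\cup\{2\}$, with the extra element $3$ exactly when $\sD(\Gamma)=\{1,2\}$. The main obstacle is precisely this last step --- bounding the degree of the "eternally interior" points --- where, unlike the routine gluing-vertex and collision arguments, the self-similarity of the excursion must be exploited quantitatively.
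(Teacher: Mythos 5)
Your proof of part~\emph{i)} is circular within the paper's logical architecture. You write ``since $X(0)=0<X(s)$ for $s\in(0,1)$\ldots'', i.e.\ you invoke Theorem~\ref{thm:treeuniq}~\emph{iii)}. But the paper \emph{derives} Theorem~\ref{thm:treeuniq}~\emph{iii)} from Proposition~\ref{prop:degrees}~\emph{i)}: its proof reads ``suppose \ldots there exists $s\in(0,1)$ such that $\cX(s)=0$. Then \ldots the root $\rho_\cX$ has degree at least two, which contradicts Proposition~\ref{prop:degrees}~\emph{i)}.'' So ``$X>0$ on $(0,1)$'' and ``$\rho_X$ is a leaf'' are essentially restatements of each other, and your part~\emph{i)} establishes neither. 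The real content of \emph{i)} in the paper is the drift argument of Lemma~\ref{lem:MC_exit}: one tracks the Markov chain $(\#M(\vartheta_n),\cL(\vartheta_n))$ along the path $\vartheta_n=1\ldots1$ in $\Theta$, shows $\#M(\vartheta_n)=1$ infinitely often while $\cL(\vartheta_n)\to0$, and concludes that the zero set of $\cX$ is contained in $\bigcap_n\Lambda^n=\{0,1\}$ (using only the easy fact $\Prob{X(\xi)>0}=1$, not the full statement $X>0$ on $(0,1)$). That argument, or a substitute for it, is entirely missing from your plan; without it both \emph{i)} and Theorem~\ref{thm:treeuniq}~\emph{iii)} are unproved.

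Your part~\emph{iii)} is a genuinely different and clean argument: the paper bounds $\E{\sup_t\mu(\{\pi_X(t)\})}$ by $\E{\max_i\cS_i}$ times itself, while you work with $\E{\sum_x\mu(\{x\})^2}$ and the contraction factor $\sum_i\E{\cS_i^2}$; both are correct and both rely on \emph{i)} to see that no mass accumulates at the gluing points. Your part~\emph{ii)} is also a genuine departure from the paper. The paper shows a $\mu$-point is, for every $\varepsilon>0$, eventually trapped in a pendant sub-copy of mass $<\varepsilon$, again via the Markov chain $(\tilde M_n,\tilde\cL_n)$ of Lemma~\ref{lem:MC_exit}~\emph{ii)}. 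You instead set up a finite linear fixed-point system for $q=\Prob{\zeta\in\llb\zeta_1,\zeta_2\rrb}$ and a related $q'=\Prob{\zeta\in\llb\zeta_1,\rho\rrb}$; the idea is sound, but note the weight in the equation for $q'$ is not simply $\sum_i\E{\cS_i^2}$, it is $\sum_i\E{\cS_i^2}+\sum_j\sum_{i\in E_j}\E{\cS_i\cS_j}$, and you must check this is $<1$ uniformly over structural trees (it is, because $E_2=\{1\}$ prevents all off-diagonal terms from appearing). As written this step is too imprecise to be accepted without the verification.

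Part~\emph{iv)} is where you acknowledge a gap, and I agree it is a real one. Your lower-bound direction ($\sD(\Gamma)\cup\{2\}\subseteq\sD(\cT_X)$) is fine, and the observation $\deg\varphi(\eta_j)=1+\#\{i:\varpi_i=j\}$ using \emph{i)} and \emph{ii)} matches the paper. But for the upper bound you propose tracking nested preimage intervals and a ``scaling/dilation'' dichotomy, and you explicitly state the eternally-interior case is unresolved. The paper instead runs a finite-time exploration on the configuration of $k+1$ sampled points: at each level one conditions on which child sub-copies the points fall into, keeps recursing as long as the configuration stays ``undecided'' (cases~(i)--(ii)), and shows that for $k\geq3$ a decisive configuration (case~(iii) or~(iv)) occurs at each level with fixed positive probability, so the exploration terminates a.s.; the $k=2$ case is then pinned by the $k\geq3$ upper bound and the no-where-monotonicity of $X$. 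Your heuristic that eternally-interior points have degree $\le3$, even if it were made rigorous, would not by itself yield the precise identity $\sD(\cT_X)=\sD(\Gamma)\cup\{2\}$ in the non-path case (it would not rule out degree~$3$ when $3\notin\sD(\Gamma)$). This part of your proposal needs to be replaced, not merely filled in.
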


\subsection{A taste of applications}\label{sec:taste_app}

All applications in this work are discussed in detail in Section~\ref{sec:app}; they cover in particular
generalizations of the trees dual to laminations of the disk. Here, we only state 
immediate consequences for the three trees encoded by the functions $\sZ$, $\sH$ and
 $\mathbf e$ that we have used 
as examples earlier (on page~\pageref{ex:examples}). Observe that, the structural tree $\Gamma$ is fixed if $K=2$, while for $K \geq 3$, 
in order to describe it, it suffices to specify the parents $\varpi_3, \ldots, \varpi_K$ of the nodes $i= 3,\dots, K$.  Recall that continuum real tree refers to an element of $\bbT^{\GP}$ or $\bbT^{\GHP}_\tf$.

\begin{cor} \label{coro1}
Up to a multiplicative constant for the distance function, we have (uniqueness being understood in 
distribution):
\begin{compactenum}[(a)]
\item The Brownian CRT $(\sT_{\mathbf e}, d_{\mathbf e}, \mu_{\mathbf e}, \pi_{\mathbf e}(0))$ is the 
unique continuum real tree satisfying \eqref{fix:tree} with $K = 3$, $\varpi_3 = 1$, $\alpha = 1/2$,  
$\mathcal S = \cR= \Delta$ 
 as in \eqref{eqe}.
\item The tree $(\sT_\sZ, d_\sZ, \mu_\sZ, \pi_\sZ(0))$ is the unique  continuum real tree satisfying \eqref{fix:tree} with $K = 2$, $\cS = \cR = \Delta$, 
  as in \eqref{eqZ} and 
$\alpha=(\sqrt{17}-3)/2$.
\item The  tree $(\sT_{\sH}, d_{\sH}, \mu_{\sH}, \pi_\sH(0))$ is the unique  continuum real tree satisfying \eqref{fix:tree} with $K = 2$, $\alpha = 1/3$, $\cS = \Delta$, 
$\cR = (W, 1-W)$ with $\Delta, W$ as in \eqref{eqH}.
\end{compactenum}
Furthermore, these uniqueness results also hold in the spaces $\bbK_\tf^\GHP$ and $\bbK^\GP$. 
\end{cor}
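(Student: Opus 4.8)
The plan is to derive every assertion from Theorem~\ref{thm:treeuniq} after recognising \eqref{eqe}, \eqref{eqZ} and \eqref{eqH} as concrete instances of the functional fixed-point equation \eqref{fix:X}.

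\emph{Step 1: reduction of \eqref{fix:tree} to \eqref{fix:X}.} Fix one of the three parameter sets $(K,\Gamma,\cR,\cS,\alpha)$ and let $\fX=(\cdot,d,\mu,\rho)$ be a random rooted continuum tree solving \eqref{fix:tree}. Solutions with $\Ec{d(\rho,\zeta)}=\infty$ do not exist and those with $\Ec{d(\rho,\zeta)}=0$ are trivial (as recorded after Theorem~\ref{thm:treeuniq}), so $\Ec{d(\rho,\zeta)}=c'\in(0,\infty)$; since the constructions of Sections~\ref{sec:rec_trees} and~\ref{sec:rec_exc} are homogeneous in the distance at the space level and linear in the excursion arguments at the level of $\Phi$, rescaling lets us assume $c'=c$ for any fixed $c>0$. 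By Theorem~\ref{thm:treeuniq}(i) there is a unique (in distribution) continuous excursion $X_\star$ solving \eqref{fix:X} for these parameters with $\Ec{X_\star(\xi)}=c$, and by Theorem~\ref{thm:treeuniq}(ii) applied to $\fX$, the space $(\mathrm{supp}(\mu),d,\mu,\rho)$ has the law of $\fT_{X_\star}$; since a continuum tree has full support, $\fX\eqdist\fT_{X_\star}$. Hence the proof reduces to identifying $X_\star$, up to a multiplicative constant, with $\mathbf e$, $\sZ$ and $\sH$ respectively, which is exactly the freedom allowed in the statement. Finally, Theorem~\ref{thm:treeuniq}(ii) is stated for an arbitrary $\fX\in\bbK^\GHP$, so the identical argument applies to any full-support metric measure space solving \eqref{fix:tree} (there $(\mathrm{supp}(\mu),d,\mu,\rho)=\fX$), which yields the concluding claim about $\bbK_\tf^\GHP$ and incidentally shows such a fixed-point is necessarily a real tree.

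\emph{Step 2: the bookkeeping checks.} For each case one verifies: (a) the stated value of $\alpha$ solves \eqref{def:alpha} --- with the Dirichlet, resp.\ uniform, moments one finds $\mathbf E[\sum_i\cR_i^\alpha\I{J\in\Gamma_i}]$ equal to $\tfrac1{2\alpha+1}+\tfrac2{2\alpha+3}$, to $\tfrac2{\alpha+2}+\tfrac2{(\alpha+2)(\alpha+3)}$, and to $\tfrac4{3(\alpha+1)}$, which equal $1$ precisely at $\alpha=\tfrac12$, $\alpha=\tfrac{\sqrt{17}-3}2$ and $\alpha=\tfrac13$; and (b) $\mathbf e,\sZ,\sH$ are non-trivial continuous excursions with $\Ec{\|f\|}<\infty$ --- for $\mathbf e$ one has $\Ec{\mathbf e(\xi)}=\int_0^1\Ec{\mathbf e(t)}\,dt\in(0,\infty)$ since $t\mapsto\Ec{\mathbf e(t)}$ is bounded and positive on $(0,1)$, and for $\sZ,\sH$ this is part of their construction as scaling limits of normalised encoding functions in \cite{legalcu,BrSu2013a} --- so that Theorem~\ref{thm:treeuniq}(i) indeed pins them down as the $X_\star$ of their respective equations after the rescaling of Step~1.

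\emph{Step 3: matching the excursion identities, the main point.} Instantiate the construction of Section~\ref{sec:rec_exc}. In case (a), $K=3$ and $\Gamma$ has $2,3$ as children of $1$, so $\partial\Gamma=\{2,3\}$, $L=4$, $V_1=\{1,4\}$, $V_2=\{2\}$, $V_3=\{3\}$; with $\bs=(\Delta_1,\Delta_2,\Delta_3)$ and split parameter $\xi$ the resulting half-open intervals are $[0,U_1)$, $[U_1,U_2)$, $[U_2,U_3)$, $[U_3,1]$ with $U_1,U_2,U_3$ as in \eqref{eqe}, the reparametrisations $\varphi_i$ are the affine maps there, and the spatial scalings are $r_i^\alpha=\Delta_i^{1/2}$, so that $\Phi(\mathbf e^{(1)},\mathbf e^{(2)},\mathbf e^{(3)},\cR,\cS,\Xi)$ agrees termwise with the right-hand side of \eqref{eqe}, the single external coordinate $\xi=\xi_1$ controlling both the graft location inside $\Lambda_1$ and the ancestral term $\Delta_1^{1/2}\mathbf e^{(1)}(\xi)$ added on $I_2$ and on $I_3$; moreover the root $\varphi(\rho_1)$ is $\pi_{\mathbf e}(0)$. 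Thus $\mathbf e$ satisfies \eqref{fix:X} for the data of (a), this identity being \eqref{eqe} by Aldous \cite[Corollary~3]{al94}. Cases (b) and (c) are the same computation with $K=2$ and $\Gamma$ the two-vertex path ($\partial\Gamma=\{2\}$, $L=3$, $V_1=\{1,3\}$, $V_2=\{2\}$), giving intervals $[0,U_1)$, $[U_1,U_2)$, $[U_2,1]$ as in \eqref{eqZ}--\eqref{eqH}; the spatial scalings are $\Delta_i^{\beta}$ in (b) (hence $\cR=\cS=(\Delta_1,\Delta_2)$) and $W^{1/3},(1-W)^{1/3}$ in (c) (hence $\cR=(W,1-W)$, independent of $\cS=(\Delta_1,\Delta_2)$), matching \eqref{eqZ} and \eqref{eqH}. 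This termwise matching of the interval decompositions --- in particular keeping track of which copy $X^{(i)}$ sits on which $I_\ell$ and of the shared external randomness --- is the only genuinely delicate step; everything else is a direct appeal to Theorem~\ref{thm:treeuniq}.
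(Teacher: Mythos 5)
Your proof is correct and takes essentially the same route as the paper. The paper's own derivation is dismissively brief (``Corollary~\ref{coro1} (a) and Corollary~\ref{cor:uni} (a) follow immediately from \eqref{eqe} and Theorem~\ref{thm:treeuniq}'', with analogous one-liners for (b) and (c) in Sections~\ref{sec:Z}--\ref{sec:H}); you have simply filled in exactly the bookkeeping the paper elides — the verification that the stated $\alpha$'s solve \eqref{def:alpha} (your Beta-moment computations are all correct), the reduction of \eqref{fix:tree} to \eqref{fix:X} via Theorem~\ref{thm:treeuniq}\emph{(ii)} and full support, and the identification of $\Phi(f_1,\dots,f_K,\cR,\cS,\Xi)$ with the explicit right-hand sides of \eqref{eqe}, \eqref{eqZ}, \eqref{eqH} by tracking $V_1,\dots,V_K$ and the intervals $I_1,\dots,I_L$. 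The observation that Theorem~\ref{thm:treeuniq}\emph{(ii)} immediately yields the extension to $\bbK_\tf^\GHP$ is also the intended reading.
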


The assertion concerning the Brownian CRT (in the unrooted set-up and using a slightly different 
definition of continuum trees) in Corollary~\ref{coro1} 
is the main result in \cite{albgol2015}. Similarly, the claim for the process $\sZ$ in Corollary~\ref{cor:uni} 
has already been given in \cite{BrSu2013a} in the space $\Co$ under additional moment assumptions. In terms of processes, we have the following

\begin{cor} \label{cor:uni}
Up to a multiplicative constant, we have (uniqueness being understood in the sense of distributions):
\begin{compactenum}[(a)]
\item The Brownian excursion $\mathbf e$ is the unique continuous excursion satisfying \eqref{eqe}. 
\item The process $\sZ$ is the unique continuous excursion satisfying \eqref{eqZ}.   
\item The process $\sH$ is the unique continuous excursion satisfying \eqref{eqH}. 
\end{compactenum}
\end{cor}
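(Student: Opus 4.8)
The plan is to observe that each of the displayed recursive identities \eqref{eqe}, \eqref{eqZ} and \eqref{eqH} is a concrete instance of the abstract fixed-point equation \eqref{fix:X}, and then to read off the statement from Theorem~\ref{thm:treeuniq}~i) together with the strengthening recorded in the discussion following it — namely that, up to a multiplicative constant, the solution of \eqref{fix:X} is unique in distribution already in the larger space $\Dex\supseteq\Cex$, with no moment hypothesis. Granting this identification, the corollary is immediate: the operator $\Phi$ of \eqref{def:phi} is positively homogeneous in its function arguments, so that if $X$ solves \eqref{fix:X} then so does $\lambda X$ for every $\lambda\ge 0$; combined with $\Dex$-uniqueness up to a constant, the set of continuous excursion solutions of \eqref{fix:X} is precisely $\{\Law(\lambda X):\lambda\ge 0\}$ for the normalized solution $X$ of Theorem~\ref{thm:treeuniq}~i). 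Since $\mathbf e$ (by \cite[Corollary~3]{al94}), and $\sZ$, $\sH$ (arising from \cite{legalcu,BrSu2013a}; in any case the existence of a continuous excursion solution is part of Theorem~\ref{thm:treeuniq}~i)) are continuous excursions satisfying the corresponding equations, each coincides with $X$ up to a multiplicative constant, which is the claim.

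Thus the actual work is the identification, and I would carry it out in two routine steps. First, the combinatorial data: for \eqref{eqe} take $K=3$ with $\Gamma$ the tree on $\{1,2,3\}$ whose root $1$ has children $2$ and $3$, and $\cR=\cS\sim\text{Dirichlet}(1/2,1/2,1/2)$; for \eqref{eqZ} take $K=2$ with $\Gamma$ the tree whose root $1$ has the single child $2$, and $\cR=\cS\sim\text{Dirichlet}(2,1)$; for \eqref{eqH} take the same $K=2$ and $\Gamma$ as for \eqref{eqZ}, with $\cS\sim\text{Dirichlet}(2,1)$ and $\cR=(W,1-W)$ for $W$ uniform on $(0,1)$, independent of $\cS$ (so $\cR,\cS\in\Sigma_K$ almost surely). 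Second, I would unwind the definition of $\Phi$ in \eqref{def:phi} for these data: the blocks of Section~\ref{sec:rec_exc} come out as $L=4$, $V_1=\{1,4\}$, $V_2=\{2\}$, $V_3=\{3\}$ for \eqref{eqe}, and $L=3$, $V_1=\{1,3\}$, $V_2=\{2\}$ for \eqref{eqZ} and \eqref{eqH}; feeding in the interval lengths prescribed by $\cS$ and the split point prescribed by $\xi$ then produces exactly the consecutive sub-intervals $[0,U_1),[U_1,U_2),\dots$ with the $U$'s the partial sums of the $\Delta_i$ starting from $U_1=\xi\Delta_1$, the piecewise-affine time changes $\varphi_i$, the spatial scalings $\cR_i^\alpha$ (which equal $\Delta_i^\beta$, resp.\ $W^{1/3}$ and $(1-W)^{1/3}$), and the grafting terms $\cR_j^\alpha X^{(j)}(\xi_j)$ on the blocks hanging off the root, that appear on the right-hand sides of \eqref{eqe}, \eqref{eqZ}, \eqref{eqH}. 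For the Brownian case this matching has in effect already been recorded in Example~1 of Section~\ref{sec:rec_exc}.

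Before applying Theorem~\ref{thm:treeuniq} I would also check that the exponents $1/2$, $(\sqrt{17}-3)/2$ and $1/3$ occurring in \eqref{eqe}, \eqref{eqZ}, \eqref{eqH} are the solutions of \eqref{def:alpha} for the corresponding data, as required. For all three trees one has $\Gamma_1=[K]$ and $\Gamma_i=\{i\}$ for $i\ge2$, so \eqref{def:alpha} reduces to $\E{\cR_1^\alpha}+\sum_{i\ge2}\E{\cR_i^\alpha\cS_i}=1$; inserting the Dirichlet moments $\E{\Delta_i^t}=\Gamma(a_i+t)\Gamma(a_0)/(\Gamma(a_i)\Gamma(a_0+t))$ turns this, respectively, into $\tfrac12+\tfrac14+\tfrac14=1$, into $\tfrac{2}{2+\alpha}+\tfrac{2}{(2+\alpha)(3+\alpha)}=1$ (equivalently $\alpha^2+3\alpha-2=0$), and into $\tfrac{4}{3(1+\alpha)}=1$ (equivalently $\alpha=\tfrac13$), confirming the three values.

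The one step I expect to be genuinely laborious — though conceptually routine — is the unwinding of $\Phi$ in the second paragraph: one must follow the bookkeeping of the blocks $V_i$, the intervals $I_k$ and the affine rescalings $\varphi_i$ precisely enough to see that $\Phi$ reproduces \eqref{eqe}, \eqref{eqZ}, \eqref{eqH} term by term, including the correct placement of the grafting terms on the middle block(s). Everything else is a direct appeal to Theorem~\ref{thm:treeuniq}~i) and an elementary Beta-integral computation.
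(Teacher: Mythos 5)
Your proposal is correct and follows essentially the same route as the paper, which simply observes (in Sections \ref{sec:e}--\ref{sec:H}) that \eqref{eqe}, \eqref{eqZ}, \eqref{eqH} are instances of \eqref{fix:X} with the combinatorial data you identify and invokes Theorem~\ref{thm:treeuniq}; your verification of the exponents via \eqref{def:alpha} and the Beta moment formula is accurate, and the appeal to the $\Dex$-uniqueness remark (to pass from "unique with $\E{X(\xi)}=c$" to "unique up to a multiplicative constant") is exactly the point the paper is implicitly using.
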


We now formulate the implications of results on the fractal dimensions of $\cT_\sZ$, $\cT_{\sH}$ and $\cT_{\mathbf e}$. 
Note that the fractal dimension of the continuum random tree has 
already been established in \cite{Aldous1991b} (see also \cite{DLG05} for the more general case 
of L{\'e}vy trees) and the Minkowski dimension of $\sT_\sZ$ in \cite{BrSu2013a}. 

\begin{cor} \label{cor:main}
Almost surely, for the processes $\mathbf e$, $\sZ$ and $\sH$ satisfying \eqref{eqe}, \eqref{eqZ} and \eqref{eqH}, \begin{compactenum}[(a)]
\item $\DimM (\sT_{\mathbf e}) = \DimH (\sT_{\mathbf e}) = 2$.
\item $\DimM (\sT_\sZ) = \DimH (\sT_\sZ) = 2/(\sqrt{17}-3)$,
\item $\DimM (\sT_{\sH}) = \DimH (\sT_{\sH}) = 3$.  
\end{compactenum}
\end{cor}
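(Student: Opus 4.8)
The plan is to deduce Corollary~\ref{cor:main} directly from two ingredients already at our disposal: the identifications in Corollary~\ref{coro1}, which describe $\cT_{\mathbf e}$, $\cT_\sZ$ and $\cT_\sH$ as (laws of) random rooted continuum trees in $\bbT^{\GHP}_\tf$ satisfying \eqref{fix:tree} with explicit data $(K,\Gamma,\alpha,\cR,\cS)$, and Corollary~\ref{cor:opt_Hol}(a) (equivalently, the combination of Theorem~\ref{thm:main2} and Theorem~\ref{thm:main3}). The only genuine work is to verify, for each of the three processes, that the hypotheses apply: that $\xoverline{\cR}$ has the borderline density $(K-1)^{-1}t^{(2-K)/(K-1)}$, so that $C=\gamma=(K-1)^{-1}$ in the notation of Theorem~\ref{thm:main2}; that $\Ec{\xoverline{\cR}^{-\delta}}<\infty$ for some $\delta>0$, as required by Theorem~\ref{thm:main3}; and that $\pc{\|\sT\|>0}>0$.

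First I would compute the law of $\xoverline{\cR}$ in each case. For the Brownian excursion, $K=3$ and $\cR=(\Delta_1,\Delta_2,\Delta_3)\sim\mathrm{Dirichlet}(1/2,1/2,1/2)$, so by exchangeability $\xoverline{\cR}\eqdist\Delta_1\sim\mathrm{Beta}(1/2,1)$, with density $\tfrac12 t^{-1/2}$ on $[0,1]$; this is exactly $(K-1)^{-1}t^{(2-K)/(K-1)}$ with $K=3$, i.e.\ $C=\gamma=\tfrac12$. For $\sZ$, $K=2$ and $\cR=(\Delta_1,\Delta_2)\sim\mathrm{Dirichlet}(2,1)$; since $\Delta_1\sim\mathrm{Beta}(2,1)$ and $\Delta_2\sim\mathrm{Beta}(1,2)$ have densities $2t$ and $2(1-t)$, the mixture $\xoverline{\cR}$ has density $\tfrac12(2t+2(1-t))=1$, so $\xoverline{\cR}$ is uniform on $[0,1]$, again matching $(K-1)^{-1}t^{(2-K)/(K-1)}=1$ with $K=2$. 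For $\sH$, $K=2$ and $\cR=(W,1-W)$ with $W$ uniform, so $\xoverline{\cR}$ is again uniform on $[0,1]$. In all three cases $\Ec{\xoverline{\cR}^{-\delta}}<\infty$ for small $\delta>0$ (any $\delta<1/2$ in the Brownian case, any $\delta<1$ in the other two), and $\pc{\|\sT\|>0}>0$ holds since the encoding excursion is non-trivial: by Theorem~\ref{thm:treeuniq}(iii) one has $X(s)>0$ for $s\in(0,1)$, forcing $\|\sT\|>0$ almost surely.

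With these checks in place, Corollary~\ref{cor:opt_Hol}(a) yields $\DimH(\cT_X)=\DimuM(\cT_X)=\DimoM(\cT_X)=\alpha^{-1}$ almost surely in each case; equivalently, Theorem~\ref{thm:main2} gives $\DimoM\le\alpha^{-1}$, Theorem~\ref{thm:main3} gives $\DimH\ge\alpha^{-1}$, and the general inequalities $\DimH\le\DimuM\le\DimoM$ collapse everything to $\alpha^{-1}$. It then remains only to substitute the value of $\alpha$ recorded in Corollary~\ref{coro1}: $\alpha=1/2$ for $\mathbf e$ gives dimension $2$; $\alpha=\beta=(\sqrt{17}-3)/2$ for $\sZ$ gives $\beta^{-1}$; and $\alpha=1/3$ for $\sH$ gives $3$.

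Since the heavy lifting is entirely carried out by Theorems~\ref{thm:main2}--\ref{thm:main3} and the structural identifications of Corollary~\ref{coro1}, I do not expect a real obstacle here; the single point demanding care is confirming that the Dirichlet-mixture computation for $\xoverline{\cR}$ lands precisely on the critical density $(K-1)^{-1}t^{(2-K)/(K-1)}$, since it is exactly this coincidence (rather than a mere density bound) that makes the upper and lower dimension bounds meet at $\alpha^{-1}$ instead of leaving a gap.
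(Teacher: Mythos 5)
Your proof is correct and takes essentially the same route as the paper: identify $\xoverline{\cR}$ as $\mathrm{Beta}(1/2,1)$ for $\mathbf e$ and as uniform on $[0,1]$ for $\sZ$ and $\sH$, check that these are exactly the critical densities $(K-1)^{-1}t^{(2-K)/(K-1)}$, and then invoke Theorems~\ref{thm:main2} and~\ref{thm:main3} (via Corollary~\ref{cor:opt_Hol}(a)) together with the identification of $\alpha$ from Corollary~\ref{coro1}. The computations and the verification of the hypotheses are all accurate.
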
 

\subsection{Overview of the main techniques} \label{tech-expan}

Most of our proofs  rely on an expansion of the fixed point equation  \eqref{fix:X}. 
For fixed point equations describing real-valued distributions, this idea is classical, 
see, e.g.\ \cite[Section 2.3]{alba}. For the Brownian CRT $\sT_{\mathbf e}$, it was used in \cite{hamcro} as well as, in \cite{albgol2015}.

Let $\Theta = \bigcup_{n \geq 0} [K]^n$ 
be the complete infinite $K$-ary tree and, for each $n \geq 0$, denote 
by $\Theta_n = [K]^n \subseteq \Theta$ the set of vertices on level $n$ in $\Theta$. 
Next, 
$$
\{(\mathcal R^\vartheta, \mathcal S^\vartheta): \vartheta \in \Theta\}, 
\qquad \text{and}\qquad 
\{\Xi^\vartheta = (\xi_1^\vartheta, \ldots, \xi_K^\vartheta) : 
\vartheta \in \Theta\}$$ 
be two independent sets of independent random variables where 
each $(\cR^\vartheta, \cS^\vartheta)$ has distribution $\tau$ and each $\Xi^\vartheta$ is 
uniformly distributed on $(0,1)^K$.
The components of $\cR^\vartheta$ and $\cS^\vartheta$ are assigned to the edges out of $\vartheta$ 
as follows: for the edge $e^\vartheta_i$ between $\vartheta$ and $\vartheta i$, define 
$\cR(e^\vartheta_i)=\cR^\vartheta_i$ and $\cS(e^\vartheta_i)=\cS^\vartheta_i$.
These edge-weights then induce values for the vertices that we define multiplicatively: 
each node $\vartheta\in \Theta$ is assigned a \emph{length} $\scL(\vartheta)$ and 
a \emph{rescaling
factor for distances} $\cV(\vartheta)$ which are given by  
\begin{align}
\cV(\vartheta) = \prod_{e \in \pi_\vartheta} \cR(e)
\qquad \text{and} \qquad
\scL(\vartheta) = \prod_{e \in \pi_\vartheta} \cS(e) \label{def_ell}, 
\end{align} 
where $\pi_\vartheta$ denotes the set of edges on the path from the root $\emptyset$ to $\vartheta$. 
We can think of $\Theta$ as providing the parameters that are required by the recursive 
decomposition. 

In Section~\ref{sec:construction_as}, we will see that, for any $c > 0$, one can construct a 
family of random excursions $\{ \Xex^\vartheta : \vartheta \in \Theta\}$, such that
\begin{align*} 
\Xex^{\vartheta} =  
\Phi \big(\Xex^{\vartheta 1}, \ldots, 
\Xex^{\vartheta K}, \mathcal R^\vartheta, \mathcal S^\vartheta, \Xi^\vartheta \big),
\end{align*}
where, for all $\vartheta \in \Theta$, the distribution of $\Xex^\vartheta$ does not depend 
on $\vartheta$, $\Xex^\vartheta$ is measurable with respect to $\{ (\cR^{\vartheta \sigma}, \cS^{\vartheta \sigma}, 
\Xi^{\vartheta \sigma}) : \sigma \in \Theta \}$, and $\Ec{\Xex^\vartheta(\xi)} = c$.

It should be clear that, for any $\vartheta \in \Theta$ and $n \geq 0$, 
the unit interval is decomposed in half-open intervals such that, for 
$\sigma \in \Theta_n$,  $\Xex^{\vartheta \sigma}$ multiplied by $(\cV(\vartheta \sigma)/\cV(\vartheta))^\alpha$ 
governs the behaviour of the process $\Xex^\vartheta$ on a set $\Lambda^\vartheta_\sigma$ of 
Lebesgue measure $\scL(\vartheta \sigma)/\scL(\vartheta)$ composed of a subset of these intervals. 
Let us give a precise formulation of this decomposition: first, for all $\vartheta \in \Theta$ 
set $\Lambda_\emptyset^\vartheta = [0,1]$ and 
$j \in [K]$, let the set $\Lambda^{\vartheta}_{j}$ and the function 
$\varphi^{\vartheta}_{j}$ be defined as $\Lambda_j$ and $\varphi_j$ in \eqref{eq:def_sets} 
using the vector $(\cS^\vartheta, \Xi^\vartheta)$. Then, given $\Lambda^\vartheta_\sigma$ and 
$\varphi^\vartheta_\sigma$ for $\sigma \in \Theta_n, n \geq 1$, for $j \in [K]$, let 
\begin{align} \label{phie} \Lambda^\vartheta_{\sigma j} 
= \big( \varphi^\vartheta_\sigma \big)^{-1} (\Lambda^{\vartheta \sigma}_j), \end{align}
and let $\varphi^\vartheta_{\sigma j}$ be the unique piece-wise linear bijective and 
increasing function with constant slope mapping $\overline{\Lambda^\vartheta_{\sigma j}}$ onto $[0,1]$. 
For $n \geq 0$, $\Lambda_\sigma^\vartheta$ is the disjoint union of the 
sets $\Lambda^\vartheta_{\sigma \omega}, \omega \in \Theta_n$. In particular, 
$\Lambda^\vartheta_{\omega}, \omega \in \Theta_n$ is a partition of the unit interval.
Throughout the paper, we write $\Lambda_\vartheta = \Lambda^\emptyset_\vartheta$,
$\vartheta \in \Theta$ and $\Xex = \Xex^\emptyset$ for the quantities at the root of $\Theta$. 
(No confusion should arise in the notation as $\Xex^\emptyset$ is indeed the process from 
Theorem~\ref{thm:treeuniq}.)

\subsection{Organization of the proofs}

The remainder of the paper is organized as follows: In Section~\ref{sec:proofs}, we prove 
Theorems~\ref{thm:treeuniq} \emph{i), ii)}, Theorem~\ref{thm:unique_tree-GHP} and 
Proposition~\ref{prop:counter_ex}. Section~\ref{sec:un1} is devoted to showing that, 
up to a scaling constant, there exists at most one solution to \eqref{fix:tree} in 
$\bbK_\tf^{\GHP}$ (or $\bbK^{\GP}$). In Section~\ref{sec:construction_as} this solution is 
constructed together with a unique continuous excursion satisfying \eqref{fix:X}. 
Parts \emph{i)} and \emph{ii)} of Theorem~\ref{thm:treeuniq} are proved at the end of 
this section. Section~\ref{sec:attractiveness_fix} contains the proofs of 
Theorem~\ref{thm:unique_tree-GHP} and Proposition~\ref{prop:counter_ex}.

Section~\ref{sec:proofs_frac} contains the proofs to the remaining statements presented in 
Section~\ref{sec:results}. In Section~\ref{sec:upbound}, we start with the verification 
of Theorem~\ref{thm:main2}. In Section \ref{sec:lowerhaus}, we give the proof of the 
lower bound on the Hausdorff dimension of Theorem~\ref{thm:main3}. In 
Section~\ref{sec:proof_degrees}, we discuss the proofs of Proposition~\ref{prop:degrees} 
and the statements \emph{iii)} and \emph{iv)} of Theorem~\ref{thm:treeuniq}. 
Corollary~\ref{cor:opt_Hol} is discussed in Section~\ref{sec:proof_hoelder}. 

Finally, Section~\ref{sec:app} is dedicated to applications. We discuss the results 
formulated in Section~\ref{sec:taste_app} for the processes $\mathbf e, \sZ$ and $\sH$ in 
detail in Sections~\ref{sec:e}--\ref{sec:H}. Section~\ref{sec:kang} contains new results 
concerning a generalization of the lamination model \cite{legalcu, BrSu2013a}.

\section{Proofs of the existence and uniqueness results}\label{sec:proofs}

\subsection{Uniqueness of the encoding function}  \label{sec:un1}

The proof of Theorem \ref{thm:treeuniq} consists of two steps. First, in Proposition~\ref{prop:treeuniq}, we show that, in distribution, there exists at most 
one compact rooted measured metric space with full support satisfying \eqref{fix:tree}. Second,  
as in \cite{BrSu2013a}, we use a variant of the functional contraction method  developed in \cite{NeSu2014a} to construct a solution 
to \eqref{fix:X} whose supremum has finite moments of all orders. We indicate how to obtain the statements \emph{i)} and \emph{ii)}  in Theorem~\ref{thm:treeuniq} 
from the next two results right after proving Proposition \ref{prop:conlimit}.

For $n \in \N = \{0, 1, 2, \ldots\}$, let $\mathbb M_n = \{ (m_{ij})_{0 \leq i,j \leq n} : 
m_{ij} = m_{ji} \geq 0, m_{ii}=0 \} $ be the set of symmetric $(n+1)$ by $(n+1)$ matrices with 
non-negative entries and zeros on the diagonal. We also write $\mathbb M_{\N}$ for the set of 
infinite dimensional matrices satisfying these properties. We always endow this set with the 
product topology and the corresponding Borel $\sigma$-field. For a fixed compact rooted 
measured metric space $\fX = (\cX, d, \mu, \rho)$, let $(\zeta_i)_{i\ge 1}$ be independent and identically distributed (i.i.d.) test
points with distribution $\mu$ on $\cX$, and set $\zeta_0 = \rho$. 
Observe that the distribution of the random infinite matrix $\mathfrak D_\fX = 
(d(\zeta_i, \zeta_j))_{i, j \geq 0}$ 
does not depend on the representative of the Gromov--Hausdorff--Prokhorov 
(or Gromov--Prokhorov) isometry class of $\fX$. Hence, we can define the  
\emph{distance matrix distribution} $\nu^\fX \in \cM_1(\mathbb M_{\N})$ for elements $\fX$ of 
$\bbK^{\GHP}$ (or $\bbK^{\GP})$\footnote{The map $\fX\to \nu^\fX$ is 
continuous for the Gromov--Prokhorov topology (via the equivalence with the so-called Gromov-weak 
topology; see \cite{GrPfWi2009a,DeGrPf} for details).}. 
For a probability distribution $\kappa \in \cM_1(\bbK^{\GHP})$
(or $\cM_1(\bbK^{\GP})$), we also define the probability measure
$$\nu^\kappa(A) := \int \nu^x(A) d \kappa(x), 
\quad A \subseteq \mathbb M_{\N} \quad \text{measurable}.$$ 
The importance of $\nu^\fX$  and $\nu^\kappa$ is highlighted by the following well-known proposition. For $\GP$-isometry classes, part \emph{i)} is typically referred to as Gromov's reconstruction theorem, see \citet[Section 3 1/2]{Gromov1999} or \citet[Theorem 4]{vershik}. 
Statements \emph{ii)}  and \emph{iii)}  (again in the $\GP$ case and for unrooted structures) are covered by Corollary 3.1 in \cite{GrPfWi2009a} (see also \cite[Corollary 2.8]{Loehr}). For $\GHP$-isometry classes, these results immediately follow from the bimeasurability of~$\iota$  
discussed at the end of Section \ref{sec:metrics}.

\begin{prop} \label{prop:distghp} We have the following results:
\begin{compactenum}[i)]
\item For fixed $\fX_1, \fX_2 \in \bbK^{\GHP}_\tf$ (or $\bbK^{\GP}$) we have $\nu^{\fX_1} = \nu^{\fX_2}$ 
if and only if $\fX_1 = \fX_2$. 
\item For $\kappa_1, \kappa_2 \in \cM_1(\bbK^{\GHP}_\tf)$ (or $\cM_1(\bbK^{\GP})$) we have $\nu^{\kappa_1} = \nu^{\kappa_2}$ if and only if $\kappa_1 = \kappa_2$.
\item  For probability distributions $\kappa$ and $\kappa_n$, $n\ge 1$, on  $\bbK^{\GP}$, we have 
$\kappa_n \to \kappa$ weakly if and only if $\nu^{\kappa_n} \to \nu^\kappa$ weakly.

\end{compactenum}
\end{prop}

With Proposition~\ref{prop:distghp} at hand, we can now compare the distance matrix 
distributions of two solutions of the fixed point equation~\eqref{fix:tree}. To this end, 
for a random variable $\mathfrak X$ with values in $\bbK^{\GHP}$ (or $\bbK^{\GP}$), 
set $\Ec{\nu^{\mathfrak X}} = \nu^{\Law(\mathfrak X)}$, that is, $\Ec{\nu^{\fX}}(A) = \Ec{\nu^{\fX}(A)}$ for a measurable set $A$.
Further, for $n \geq 0$ and 
fixed $\mathfrak X \in \bbK^{\GHP}$ (or $\bbK^{\GP})$, we 
write $\nu_{n}^{\mathfrak X} \in \cM_1(\mathbb M_n)$ for the distribution of the distance 
matrix induced by the first $n+1$ points $\zeta_0, \zeta_1, \ldots, \zeta_n$. The quantities
$\nu_{n}^{\kappa}$ for a probability distribution $\kappa\in \cM_1(\bbK^\GHP)$ 
(or $\cM_1(\bbK^\GP)$) and $\Ec{\nu_n^{\fX}}$ for a random variable $\fX$ shall be 
defined correspondingly.

\begin{prop} \label{prop:treeuniq}
Let $\mathfrak X = (\cX, d, \mu, \rho)$ and $ {\mathfrak U} = (\mathcal U, d', \mu', 
\rho')$  be $\bbK^{\GHP}_\tf$ (or $\bbK^{\GP}$)-valued r.v.'s satisfying \eqref{fix:tree} 
(in distribution) and 
$\Ec{d(\rho, \zeta)} = \Ec{d'(\rho', \zeta')}$ where $\zeta$ (resp.\ $\zeta'$) is 
chosen on $X$ (resp.\ $\mathcal U$) according to $\mu$ (resp.\ $\mu'$). Then, 
$\Ec{\nu^{\mathfrak X}} = \Ec{\nu^{\mathfrak U}}$, hence $\Law(\mathfrak X) = \Law( \mathfrak U)$.
\end{prop}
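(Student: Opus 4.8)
The plan is to compare the annealed distance matrices $\Ec{\nu^{\fT}}$ and $\Ec{\nu^{\mathfrak U}}$ by using the recursive structure of \eqref{fix:tree} to set up a fixed-point (contraction) argument on the level of distance matrices. By Proposition~\ref{prop:distghp} \emph{i)}--\emph{ii)}, it suffices to show $\Ec{\nu^{\fT}}=\Ec{\nu^{\mathfrak U}}$, and since $\nu^\fX$ is determined by the finite marginals $\nu_n^\fX$, it is enough to show $\Ec{\nu_n^{\fT}}=\Ec{\nu_n^{\mathfrak U}}$ for every $n\ge 0$. The idea is to express the law of the finite distance matrix $\mathfrak D_n = (d(\zeta_i,\zeta_j))_{0\le i,j\le n}$ recursively: given the $n+1$ sample points $\zeta_0=\rho,\zeta_1,\dots,\zeta_n$ of $\fT$, each $\zeta_\ell$ ($\ell\ge 1$) falls into exactly one of the $K$ subspaces $X_i$ of the decomposition (the one with $J=i$ in the notation of \eqref{fix:Y}), where the index is chosen according to $\cS$; conditionally on the allocation, the within-subspace points are distributed as independent $\mu_i$-samples, so the pairwise distances inside a common subspace $X_i$ are governed (after rescaling by $\cR_i^\alpha$) by the distance matrix of $\fX_i$ on the appropriate number of points, while the distance between points in different subspaces is reconstructed by adding the (rescaled) lengths of the connecting segments through the structural tree $\Gamma$, plus the positions of the gluing points $\eta_i$ — which are themselves $\mu_i$-samples. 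In other words, $\mathfrak D_n$ is an explicit deterministic function of: the allocation vector, the weights $(\cR,\cS)$, and finitely many \emph{larger} distance matrices $\mathfrak D^{(i)}_{n_i+1}$ of the i.i.d.\ copies $\fX_i$ (the $+1$ accounting for the gluing point $\eta_i$, which plays the role of a root in the subspace).

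The second step is to turn this into a contraction. Fix $n$ and consider the vector of annealed finite distance-matrix laws $(\Ec{\nu_m^{\fT}})_{m\le N}$ for $N$ large (or work in the Gromov--Prokhorov / Gromov--weak sense with a suitable complete metric, e.g.\ a Wasserstein-type distance between the annealed distance matrices truncated at $N$ points, weighted so that the total mass is finite). The recursion above shows that this vector is a fixed point of a map $G$ built from the decomposition; because each copy $\fX_i$ is scaled in distance by $\cR_i^\alpha$ with $\sum_i \Ec{\cR_i^{2\alpha}}<1$ (noted in the discussion after \eqref{fix:Y}), and because the expected number of sample points landing in subtree $i$ is controlled (it is at most $n$, and in expectation comparable to $n\cS_i$), the map $G$ is a strict contraction in an appropriate weighted metric on the space of such sequences of probability measures. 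The crucial finiteness needed to make the metric well defined — that all moments $\Ec{\|X\|^m}$ are finite — is not yet available at this point of the paper for an arbitrary solution of \eqref{fix:tree}; this is the first technical obstacle, and I expect the authors to circumvent it by first establishing $\Ec{d(\rho,\zeta)}<\infty$ (guaranteed by hypothesis) and then bootstrapping finiteness of higher moments of the sampled distances from the recursion \eqref{fix:Y} together with the results of \cite{durlig} on uniqueness of $Y$, or simply by working with a metric that does not require moments of the matrix entries but only of a single entry, e.g.\ a minimal-coupling distance that is automatically bounded by the (finite) expected diameter.

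The third step is bookkeeping: verify that both $\fT$ and $\mathfrak U$, having the same $\Ec{d(\rho,\zeta)}$, feed the same ``boundary data'' into the recursion, so that $(\Ec{\nu_m^{\fT}})_m$ and $(\Ec{\nu_m^{\mathfrak U}})_m$ are both fixed points of the same contraction $G$; uniqueness of the fixed point then gives $\Ec{\nu_n^{\fT}}=\Ec{\nu_n^{\mathfrak U}}$ for all $n$, hence $\Ec{\nu^{\fT}}=\Ec{\nu^{\mathfrak U}}$ and the proposition. I expect the main obstacle to be the careful formulation of the recursion for $\mathfrak D_n$ — in particular handling the random allocation of the $n$ points among the $K$ subspaces (which makes the ``arity'' of the recursion random and couples the sizes $n_i$ via $\sum n_i = n$), and ensuring that the gluing points $\eta_i$ are incorporated correctly so that distances across subtrees are reconstructed from the subspace distance matrices alone. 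A secondary subtlety is making sure the contraction constant is genuinely $<1$ uniformly in $n$: this needs the quadratic bound $\sum_i\Ec{\cR_i^{2\alpha}}<1$ rather than just $\sum_i\Ec{\cR_i^{\alpha}}=1$, and a convexity/Cauchy--Schwarz estimate to pass from the per-subtree scaling to the metric on the whole sequence.
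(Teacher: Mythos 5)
Your overall strategy -- compare annealed distance matrices via Proposition~\ref{prop:distghp}, reduce to finite marginals $\Ec{\nu_n^{\fT}}$, and exploit the recursive decomposition of a distance matrix according to which of the $K$ subspaces the $n$ sample points land in -- is exactly the paper's. But two of your specific claims misread the structure of the recursion, and a third correctly flags a problem that your proposed fix does not resolve while the paper's actual argument sidesteps it entirely.

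First, you assert that $\mathfrak D_n$ is expressed in terms of \emph{larger} distance matrices $\mathfrak D^{(i)}_{n_i+1}$ of the subspaces, the $+1$ coming from the gluing point $\eta_i$. This is not correct, and if it were the recursion would not close at all. Each subspace $i$ receives $\ell_i$ of the $n$ test points (with $\sum_i \ell_i = n$) plus possibly one extra point $\eta_i$ (when the path to some $\zeta_j$ in a descendant subtree passes through); so the number of points whose distance matrix is needed in subspace $i$ is $\ell_i^* \in \{\ell_i, \ell_i+1\}$, and one checks that $\ell_i^* \le n$ always, with $\ell_i^* = n$ possible only when $\ell_i \geq n-1$, i.e.\ essentially all test points fall into the same subspace. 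So the decomposition expresses $D_n$ in terms of matrices of size at most $n$, with a same-size copy appearing in at most one of the $K$ subspaces.

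Second, that observation is the crux of the paper's argument, and it buys you far more than a contraction estimate. Conditioning on the allocation and doing bookkeeping as you describe, the paper obtains a \emph{perpetuity} equation
\begin{equation*}
D_n \stackrel{d}{=} A_n D_n + B_n,
\end{equation*}
where $A_n$ is a scalar random variable with $|A_n|<1$ almost surely (namely one of the $\cR_i^\alpha$), $B_n$ is a random linear expression built only from $(\cR,\cS)$ and independent copies of $D_1,\dots,D_{n-1}$, and $(A_n,B_n)$ is independent of $D_n$. The induction on $n$ then makes the distribution of $B_n$ known, and uniqueness of the perpetuity in distribution is classical (Vervaat, Theorem 1.5), requiring no moment assumptions whatsoever. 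Your proposed route -- a Wasserstein-type contraction on a weighted sequence space of annealed finite distance-matrix laws -- would require moment control on the entries of $D_n$, which, as you yourself observe, is not yet available at this point in the paper. Your suggested patches (bootstrapping higher moments from \eqref{fix:Y}, or designing a coupling metric that avoids moments) are speculative and not obviously workable; the perpetuity reformulation simply dissolves the problem. You also invoke the bound $\sum_i \Ec{\cR_i^{2\alpha}}<1$ to get a contraction constant; the paper needs only $|A_n|<1$ a.s., which is a qualitatively weaker input.

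In short: you have the right decomposition in mind, but you should re-examine what it actually gives you. It does not require a global contraction in a moment-dependent metric; it gives a perpetuity in $D_n$ once $D_1,\dots,D_{n-1}$ are fixed by induction, and that is both sharper and moment-free.
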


\begin{proof}
For $n \geq 1$, let $D_n$ (resp. $D_n'$) be a random variable on $\mathbb M_n$ with 
distribution $\Ec{\nu_n^ {\mathfrak X}}$ (resp. $\Ec{\nu_n^ {\mathfrak U}}$). 
 Both $D_1$ and $D_1'$ satisfy fixed point equation \eqref{fix:Y}. As elaborated in the discussion of \eqref{fix:Y} it follows from the 
 results in \cite{durlig} that $D_1$ and $D_1'$ are identically distributed since both random variables have mean $c$.
 Our aim is to show by induction on $n$ that, for all $n \geq 2$, 
both $D_n$ and $D_n'$ satisfy the same 
stochastic fixed point equation known to admit at most one solution.
Taking that for granted for now, it follows immediately that 
$\Ec{\nu_n^ {\mathfrak X}} = \Ec{\nu_n^{\mathfrak U}}$ for all $n \geq 1$; 
Proposition~\ref{prop:distghp} \emph{ii)} then proves the assertion.

Let $A \subseteq \mathbb M_n$ be measurable. 
With $\tau = \Law((\cR, \cS))$, $\aleph = \Law(\mathfrak X)$ and $\mathfrak X_1, \ldots, 
\mathfrak X_K$ i.i.d.\ with distribution $\aleph$, also independent of $(\cR, \cS)$, 
we deduce from \eqref{fix:tree} that
\begin{align} 
\pc{D_n \in A}  
& = \nu_n^{\aleph}(A) 
= \nu_n^{\Psi(\aleph, \tau)}(A) 
= \Ec{\nu_n^{\psi(\mathfrak X_1, \ldots, \mathfrak X_K, \cR, \cS)}(A)} \nonumber\\
& = \int_{\Sigma_K^2} d \tau(\mathbf{r},\mathbf{s}) 
\int_{(\mathbb K^{\GHP})^K} d \aleph^{\otimes K} (x_1, \ldots, x_K) 
\Ec{\nu_n^{\mathfrak X^*(x_1, \ldots, x_K,\mathbf{r}, \mathbf{s})}}(A), \label{eq:inte}
\end{align}
where $\mathfrak X^*(x_1, \ldots, x_k, \mathbf{r}, \mathbf{s})$ has distribution
$\psi(x_1, \ldots, x_k, \mathbf{r}, \mathbf{s})$. (Here, we could apply Fubini's theorem thanks 
to the product measurability of the map $\psi$ proved in the supplementary material.) 
We now keep $\br=(r_1,\dots, r_K)$, $\bs=(s_1,\dots, s_K)$ and $x_1, \ldots, x_K$ fixed.  
Let $\bar x_1, \ldots, \bar x_K$ be arbitrary representatives of $x_1, \ldots, x_K$ and write
$\bar x_i = (\bar x_i, \bar d_i, \bar \mu_i, \bar \rho_i)$. 
Let $\eta_i, i \in [K]$ be independent points on $\bar x_i$ with distribution $\bar \mu_i$. 
Let $x_* = (x_*, d_*, \mu_*, \rho_*)$ be the space constructed with the help of 
$\Gamma, \alpha, \mathbf{r},\mathbf{s}, \bar x_1, \ldots, \bar x_K, \eta_1, \ldots, \eta_K$ 
following the steps ii) - iv) on page \pageref{list}. 
To sample independent test points 
$(\zeta_\ell)_{1\le \ell \le n}$, on $x_*$ according to $\mu_*$, we consider a family of 
independent random variables $\{\theta_{i,j} : i \in [K], j=1, \ldots, n\}$ which is independent 
of the glue points $\eta_i$, $i \in [K]$, where each $\theta_{i,j}$ takes values in $\bar x_i$ 
and has distribution $\bar \mu_i$. Let also $\bJ=(J_1,\dots, J_n)$ be a vector of i.i.d.\ random 
variables with values in $[K]$, independent of the remaining quantities, 
with $\pc{J_\ell = j} = s_j$ for $j \in [K]$. 
Define $\zeta_\ell = \cp(\theta_{J_\ell,\ell})$, where $\cp$ is introduced in step \emph{ii)} 
of the construction on page~\pageref{it:construction}.

 Then $(\zeta_i)_{1\le i \le n}$ is a family of i.i.d.\ points with distribution $\mu_*$. Set $\zeta_0 := \rho_*$. By construction, the matrix $W_n = (d(\zeta_i, \zeta_j))_ {0 \leq i, j \leq n}$ has distribution 
$\Ec{\nu_n^{\mathfrak X^*(x_1, \ldots, x_k,\mathbf{r},\mathbf{s})}}$. (In particular, its distribution does not depend on the choice of the representatives $\bar x_1, \ldots, \bar x_K$.) 

We now decompose $W_n$ by looking in which of the subspaces (or, more precisely, their images under $\cp$) the random points $\zeta_1,\dots, \zeta_n$ fall. In each space, the trace of the paths induce a rescaled distance matrix with a potentially different number of points which maybe the original points, the root of the subspace or glue points. 
To this end, first recall that the root $\rho_*=\zeta_0$ 
is the (image of the) root $\bar \rho_1$ of $\bar x_1$. 
Set $j_0 := 1$ and, for $\mathbf j = (j_1, \ldots, j_n) \in [K]^n$ and $i \in [K]$ 
define $L_i^{\mathbf j}  = \{0 \leq  \ell \leq n : j_\ell = i\}$. 
Then, set $\ell_1 = \# L_1^{\mathbf j} - 1$ and, for $2 \leq i \leq K$ 
define $\ell_i=\# L_i^{\mathbf j}$. Further, for $i \in [K]$, let $\ell_i^* = \ell_i + 1$ 
if there exists some $1\le \ell\le n$ such that $j_\ell \in \Gamma_i \setminus \{i\}$ 
and $\ell_i^* = \ell_i$ otherwise.
 ($\ell_i$ is the number of test points falling in (the image of) $\bar x_i$, and $\ell_i^*$ accounts 
for the glue point $\eta_i$ which plays a role if there is some segment
$\llb \zeta_p,\zeta_q\rrb$ containing its image under $\cp$.)

 For $\mathbf j \in [K]^n$ let $\cE^{\mathbf j}$ be the event that $J_\ell = j_\ell$ for all $\ell \in [n]$. 
For integers $p \geq 1,  i \in [K],$ we write $Y^{(p)}_{x_i}$ for a generic random variable 
with distribution $\nu_p^{x_i}$.
Now note that, on $\cE^{\mathbf j}$, the distance matrix $W_n$ has the same 
distribution as a linear combination of deterministic linear operators evaluated at independent copies of $Y^{(\ell_i^*)}_{ x_i}, i \in [K]$. 
To this end, for $\ell=1,2, \dots,n$, 
let $\rr(\ell):=\#\{p\in L_{j_\ell}^{\mathbf j}: 1\le p\le \ell\}$ be the rank of $\ell$ in 
the set $L_{j_\ell}^{\mathbf j}$, and set $\rr(0) = 0$.
Then, for $i \in [K]$, we define operators
$G_i^{(\bj)} : \bbM_{\ell_i^*} \to \bbM_{n}$ in two steps. 
Let $A\in \bbM_{\ell_i^*}$ and $1\le p,q\le n$. First, set
\begin{align} \label{g1} 
G_i^{(\mathbf j)}(A)_{p,q} 
= 
\begin{cases} 
A_{\rr(p),\rr(q)} & \text{if } p, q \in L_i^{\bj}, \\
A_{0,\rr(p)} & \text{if } p  \in L_i^{\bj}, j_{q} \notin \Gamma_i,  \\
A_{0,\rr(q)} & \text{if } q  \in L_i^{\bj}, j_{p} \notin \Gamma_i.  
\end{cases}
\end{align}  
This does not define all the entries of $G_i^{(\mathbf j)}(A)$. For the remaining 
entries: if $\ell_i^* = \ell_i$, then $G_i^{(\bj)}(A)_{p,q} = 0$ for all $p, q$ which 
are not covered in one of the cases in \eqref{g1}; or else we have $\ell_i^* = \ell_i + 1$, 
and then
\begin{align} \label{g2} 
G_i^{(\bj)}(A)_{p, q} 
= 
\begin{cases} 
A_{\rr(p),\ell_i^*} & \text{if } p  \in L_i^{\bj}, j_{q} \in \Gamma_i \setminus \{i\},\\
A_{\rr(q),\ell_i^*} & \text{if } q  \in L_i^{\bj}, j_{p} \in \Gamma_i \setminus \{i\},\\
A_{0, \ell_i^*} & \text{if } j_{p}  \in \Gamma_i \setminus \{i\}, j_{q} \notin  \Gamma_i 
\text{ or }  j_{q}  \in \Gamma_i \setminus \{i\}, j_{p} \notin  \Gamma_i,  
\end{cases} 
\end{align} 
and  $G_i^{(\bj)}(A)_{p, q}  = 0$ for all $p, q$ which are covered neither in \eqref{g1} 
nor in \eqref{g2}. Note that, if $\ell_i^* = 0$, we have $G_i^{(\bj)} \equiv 0$.

The following observation is the crucial ingredient of the proof: When conditioning 
on $\cE^{\bj}$, as the points $\theta_{k, \ell}$, $\ell = 1, \ldots, n$, and $\eta_k$, 
are all independent and distributed on $\bar x_k$ according to $\bar \mu_k$, 
we may think of $\eta_k$ as an additional test point on $\bar x_k$. Thus, on $\cE^{\bj}$, 
we obtain the following distributional equality
\begin{align} \label{fix:W}
W_n \eqdist \sum_{i=1}^K r_i^\alpha \cdot G_i^{(\mathbf j)}(Y_{x_i}^{(\ell_i^*)}), 
\end{align}
where the $ Y^{(\ell_i^*)}_{x_i}$ , $i \in [K]$, are independent.
(For the sake of convenience, we agree to set $Y^{(0)}_{x_i}$ to be the matrix containing
a single entry which is $0$.)

The cases where $\ell_i^* = n$ for some $i \in [K]$ need to be considered in detail:
 indeed, they are the cases that yield $(n+1)\times (n+1)$ distance matrices from the 
constituant subspaces, and are thus crucial to the fixed point argument.
To this end, we define the following subsets of $[K]^n$: $C_{i} = \{(i, \ldots, i)\}$ and 
\begin{align*}
C^*_i = \bigcup_{k=1}^K \Big\{(j_1, \ldots, j_n) : j_\ell = i 
\text{ for all } \ell \neq k, j_k \in \Gamma_i \setminus \{i\}\Big\},
\end{align*}
as well as $C = C_1 \cup \dots \cup C_K$ and $C^* = C^*_1 \cup \dots \cup C^*_K$.
Then, for $i \in [K]$ we have $\ell_i^* = n$ if and only if $\bj \in C_i \cup C_i^*$.
In the following, we distinguish these two cases. Recall the definition of $E_i$ from 
\eqref{def:subtree}.

{\bf a)} If $\bj \in C_{i} $, the operator $G_{i}^{(\bj)}$ is the identity and, as observed 
previously, $G_k^{(\bj)} \equiv 0$ if $k \notin E_i \cup \{i\}$ since $\ell_k^* = 0$.  
For  $k \in E_{i}$, however, we have $\ell_k^* = 1$ and $G_{k}^{(\bj)} : \bbM_{1} \to \bbM_{n}$ 
is defined in \eqref{g1}  and \eqref{g2}.
Thus, in this case, \eqref{fix:W} can be written as 
\begin{align} \label{fi}
W_n \eqdist 
r_{i}^\alpha \cdot Y_{x_{i}}^{(n)} 
+ \sum_{k=1, k \neq i}^K r_{k}^\alpha \cdot G_k^{(\mathbf j)}(Y_{x_k}^{(\ell_k^*)}),
\end{align}
where $Y^{(n)}_{x_{i}}, Y_{x_k}^{(1)}$ , $k \in [K] \setminus \{i\}$ are 
independent. 
As just observed, $\ell_k^* \in \{0, 1\}$ for all $k \neq i$.

{\bf b)} For $\bj \in C^*_{i} $, letting $k^* \neq i$ denote the unique value with $L^{\mathbf j}_{k^*} \neq \emptyset$, we have 
$\ell_k^* = 1$ for all $k \in \{k^* \} \cup (E_{k^*} \setminus \{i\})$ and $\ell_k^* = 0$ for all $k \notin \{k^*\} \cup E_{k^*}$.

The operators $G_k^{(\bj)}$ for $k \neq i$ are defined in \eqref{g1}  and \eqref{g2}.
The operator $G_i^{(\bj)}$ acts on a matrix $A \in \bbM_n$ by permuting the indices as 
follows: $0 \to 0, k \to \rr(k)$ for $k \neq m$ and $m \to \ell_i^*=n$.
As the distribution of $Y_{x_i}^{(n)}$ is invariant under such permutations, 
the random variable $W_n$ satisfies \eqref{fi}.

Upon collecting our findings for the different cases and performing the integration 
in \eqref{eq:inte}, it follows that the random matrix $D_n$ satisfies 
\begin{align} \label{perpp}
D_n \eqdist U_n D_n + V_n,
\end{align}
where $U_n\in [0,1]$, $V_n\in \bbM_n$, the $(U_n, V_n), D_n$ are independent, and we have
\begin{align} \label{def:aa}
U_n = \sum_{i=1}^K 
\cR_{i}^\alpha \cdot \mathbf 1_{\bJ \in C_{i} \cup C^*_i}
\end{align}
and
\begin{align} \label{def:bb} 
V_n & = 
\sum_{i=1}^K \sum_{\bj \in C_i \cup C_i^*} \mathbf 1_{\bJ = \bj} 
\sum_{k \neq i }^K \cR_k^\alpha \cdot G_k^{(\bj)}(D_{\ell_k^*}^{(k)})  
+ \sum_{\bj \notin C \cup C^*} \mathbf 1_{\bJ = \bj} 
\sum_{k =1 }^K \cR_k^\alpha \cdot G_k^{(\bj)}(D_{\ell_k^*}^{(k)}). 
\end{align}
Here, recall that we have $\bJ = (J_1, \ldots, J_n)$, where, given $(\cR, \cS)$, the random 
variables $J_1, \ldots, J_n$ are independent and each $J_i$ is distributed as $J$ 
in \eqref{def:alpha}, and $\{D_\ell^{(k)}: 0 \leq \ell \leq n-1, k \in [K]\}$ is an 
independent family of random variables, which is independent of $(\cR, \cS, \bJ)$, 
where each $D_\ell^{(k)}$ is distributed like $D_\ell$. 

A random variable satisfying a fixed point equation of type \eqref{perpp} is called a \emph{perpetuity.} 
It follows from classical results on perpetuities, 
e.g.\ from \cite[Theorem 1.5]{Vervaat1979a}, that \eqref{perpp} has at most 
one solution (in distribution). Repeating the arguments shows that $D_n'$ satisfies a 
distributional identity of the form $D_n' = U'_n  D_n' + V'_n$ with $U'_n = U_n$ and the 
additive term $V'_n$, can be obtained from $V_n$ by replacing each $D^{(k)}_\ell$ by a copy 
of $D_\ell'$ while maintaining the independence structure. Hence, by our induction hypothesis, 
$(U_n,V_n)$ and $(U_n', V_n')$ are identically distributed which shows that $D_n$ and $D'_n$ 
are identically distributed and concludes the proof of the induction. 
\end{proof}

\subsection{Construction of a solution}\label{sec:construction_as}

In this section, we construct the family of processes $\{\Xex^\vartheta : \vartheta\in \Theta \}$
mentioned in Section~\ref{tech-expan} that plays a central role in a number of proofs later on.
The following proposition is a generalization of Theorem~6 and Theorem~17 in \cite{BrSu2013a}. 
We keep the presentation rather compact and refer to \cite{BrSu2013a} for more details on 
technical points; this applies in particular to a number of tedious but straightforward 
inductions occurring throughout the proof.  
Recall that $\| \cdot \|$ denotes the uniform norm on $\Cex$ and the 
definition of $\mathfrak m$ in \eqref{eq:def_fm}. For  $a > 0$, let
$$\mathcal M^a = \left \{ \mu  \in \mathcal M_1(\Cex): 
\int \!\!\!\int_0^1 x(t) dt \mu(dx) = a \text{~and~}
\int \|x\|^{\mathfrak m} \mu(dx) < \infty \right\}.$$
Fix $c > 0$ and let $Q_0^\vartheta = c,  \vartheta \in \Theta$. 
Recall the map $\Phi$ defined in Section~\ref{sec:rec_exc}. 
Recursively, for $n \geq 1$ and $\vartheta \in \Theta$, define
\begin{align} \label{def:qn}
Q_{n}^\vartheta 
= \Phi(Q_{n-1}^{\vartheta1}, \ldots, Q_{n-1}^{\vartheta K}, 
\cR^\vartheta, \cS^\vartheta, \Xi^\vartheta).
\end{align}

\begin{prop} \label{prop:conlimit}
For any $\vartheta \in \Theta$,  almost surely, the sequence $Q_n^\vartheta$ defined 
in \eqref{def:qn} converges uniformly to a process $\Xex^\vartheta$. 
For any $\vartheta \in \Theta$, we have, almost surely,
\begin{align} \label{fix:as}
\Xex ^{\vartheta} =  
\Phi( \Xex^{\vartheta 1}, \ldots, 
\Xex^{\vartheta K}, \mathcal R^\vartheta, \mathcal S^\vartheta, \Xi^\vartheta).
\end{align}
Furthermore, $\Law(\Xex^\vartheta)$ is the unique solution to \eqref{fix:X} 
in the set $\cM^c$ and $\Ec{\|\Xex^\vartheta\|^m} < \infty$ for all $m \geq 1$. 
\end{prop}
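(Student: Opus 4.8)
Following the functional contraction method of \cite{BrSu2013a}, I would argue as follows. The iteration \eqref{def:qn} realises, for each $\vartheta\in\Theta$, the $n$-th iterate started from $\delta_f$ of the annealed map $\mu\mapsto\Law\big(\Phi(Y_1,\dots,Y_K,\cR,\cS,\Xi)\big)$ with $Y_1,\dots,Y_K$ i.i.d.\ $\sim\mu$, now coupled across all $\vartheta$ through the shared families $\{(\cR^\vartheta,\cS^\vartheta,\Xi^\vartheta)\}$. The plan is: (a) bound all moments of the iterates uniformly in $n$; (b) show the increments $Q^\vartheta_n-Q^\vartheta_{n-1}$ contract geometrically in $L^p[0,1]$ for every $p$; (c) combine (a) and (b) to upgrade this to almost sure convergence in $(\Cex,\|\cdot\|)$; (d) pass to the limit in \eqref{def:qn}, using that $\Phi$ is $\|\cdot\|$-Lipschitz, to obtain \eqref{fix:as}, noting $\cX^\vartheta$ then inherits finite moments of all orders (by Fatou) and mean $c$; (e) deduce uniqueness in $\mathcal M^c$ from the contraction estimate in (b).

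Two observations drive everything. First, \eqref{def:alpha} is exactly the statement $\E{\sum_{i=1}^K\beta_i^\alpha}=1$, so the decomposition behind \eqref{fix:Y} gives $\E{\int_0^1 Q^\vartheta_n(t)\,dt}=\int_0^1 f(t)\,dt=c$ for all $n$; thus every iterate, and the limit, has mean $c$. Second, if $Q$ and $Q'$ are built from the \emph{same} external randomness out of inputs that are independent across the $K$ branches and share the mean $c$, then on each piece of the level-one decomposition $Q-Q'$ splits into a \emph{main} term $\cR_i^\alpha(\,\cdot\,)\circ\varphi_i$ from one input copy plus a \emph{spatially constant} path-correction $\sum_{j\in E_i}\cR_j^\alpha(\,\cdot\,)(\xi_j)$ from others; since all inputs have mean $c$, the path-correction differences are conditionally centred, hence contribute nothing at second (indeed at any even) order. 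Summing over pieces gives, for $p=2$, the clean bound that the expected squared $L^2$-norm of $Q-Q'$ is at most $\rho$ times the maximal expected squared $L^2$-distance of the inputs, with $\rho=\E{\sum_i\beta_i^{2\alpha}}<1$ (strict since each $\cR_i\in(0,1)$ a.s.); an induction on $p$ based at $p=2$ upgrades the constant to $\sum_i\E{\beta_i^{p\alpha}}<1$ up to lower-order terms that the inductive hypothesis makes geometrically small. Applied to the coupling of $Q^\vartheta_m$ and $Q^\vartheta_{m-1}$ induced by the shared randomness, this yields $\E{\|Q^\vartheta_m-Q^\vartheta_{m-1}\|_{L^p}^p}\le C_p\rho_p^{\,m}$ with $\rho_p<1$. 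For (a) I would run the analogous recursion for the \emph{sup}-norm: there the single main sub-copy carries the coefficient $\theta_m=\sum_i\E{\cR_i^{m\alpha}}$, which is $<1$ for $m\ge\fm$ by \eqref{eq:def_fm}, while the spawned sub-copies enter only through point evaluations, hence only through the (bounded) $L^m$-norms; a standard comparison argument then closes $\sup_n\E{\|Q^\vartheta_n\|^m}<\infty$ for $m\ge\fm$, and the range $m<\fm$ follows by Jensen.

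For (c) I would apply the same ``main versus spawned'' dichotomy to $b_n:=\E{\|Q^\vartheta_{n+1}-Q^\vartheta_n\|^{\fm}}$: the sup-norm recursion gives $b_{n+1}\le\theta_{\fm}\,b_n+(\text{forcing term})$, where the forcing term is assembled from point evaluations of $Q^\vartheta_n-Q^\vartheta_{n-1}$ and is therefore bounded by $C\,\E{\|Q^\vartheta_n-Q^\vartheta_{n-1}\|_{L^{\fm}}^{\fm}}\le C'\rho_{\fm}^{\,n}$ via (b), the mixed terms being absorbed into $\theta_\fm b_n$ by a weighted arithmetic--geometric inequality. Since $\theta_{\fm}<1$ and $\rho_{\fm}<1$, this forces $b_n\le C''\kappa^n$ for some $\kappa<1$, so $\sum_n\E{\|Q^\vartheta_{n+1}-Q^\vartheta_n\|}<\infty$ and $(Q^\vartheta_n)_n$ is almost surely uniformly Cauchy; its limit $\cX^\vartheta$ is continuous, and \eqref{fix:as} is obtained by letting $n\to\infty$ in \eqref{def:qn}. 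The hard part is precisely this last recursion: each path-correction is small only \emph{on average}, while there are about $K^n$ pieces at level $n$, so one must resist bounding the corrections by sup-norms (that would cost a factor $K$ per level and ruin the estimate) and instead propagate them as $L^p$-quantities, retaining a single main branch at every recursive step. Finally, for (e): $\Law(\cX^\vartheta)\in\mathcal M^c$ is a fixed point by (d), and if $\nu\in\mathcal M^c$ is any other, both have mean $c$ and finite second $L^2$-moment, so applying the $p=2$ estimate to optimally coupled copies gives $d(\nu,\Law(\cX^\vartheta))\le\sqrt{\rho}\,d(\nu,\Law(\cX^\vartheta))$ for the minimal $L^2[0,1]$-metric $d$; since $\rho<1$ and this distance is finite, $\nu=\Law(\cX^\vartheta)$.
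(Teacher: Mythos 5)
Your proposal is correct and follows essentially the same route as the paper: the paper measures the increments $\Delta Q_n^\vartheta$ at a uniform point $\xi$, which is exactly your $L^p[0,1]$-norm framing, it exploits the same observation that the ``spawned'' point-evaluation corrections are conditionally centred so cross terms vanish at order two, it runs the same induction on $p$ to push this to all moments, and it then boosts to the sup-norm using $\sum_i\E{\cR_i^{m\alpha}}<1$ for $m\ge\fm$ while feeding the off-diagonal contributions through the already-controlled pointwise moments. The only cosmetic difference is in the uniqueness step, where the paper reruns the iteration starting from a generic $\mu\in\cM^c$ and shows it converges a.s.\ to the same $\cX^\vartheta$, rather than quoting a one-step contraction of a minimal $L^2$-metric, but the mechanism is identical.
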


\begin{rem} \label{rem:al}
The proof of the proposition also shows the following: 
Let $\mu \in \mathcal M^c$, and let
$\{\cY^\vartheta: \vartheta \in \Theta\}$ be a family of independent random processes 
with $\Law(\cY^\vartheta) = \mu$  for all $\vartheta \in \Theta$ which is 
independent of $\{(\cR^\vartheta, \cS^\vartheta, \Xi^\vartheta): \vartheta \in \Theta\}$. 
Then, the sequence $Q_n^\vartheta$ initiated with $\{\cY^\vartheta: \vartheta \in \Theta \}$, 
that is, $Q_0^\vartheta = \cY^\vartheta$,
converges almost surely uniformly to $\Xex^\vartheta$. \end{rem}

\begin{proof}[Proof of Proposition~\ref{prop:conlimit}]
First of all, note that, for $n \geq 1$, \eqref{def:qn} says that, for all $t\in [0,1]$,
\begin{align} \label{def:qn_detailed}
Q_{n}^\vartheta (t) =
\sum_{i=1}^K \mathbf{1}_{ \Lambda_i^\vartheta}(t) 
\Bigg [ & (\mathcal R_i^\vartheta)^\alpha Q_{n-1}^{\vartheta i}(\varphi^\vartheta_i(t))  + \sum_{j \in E_i}  (\mathcal R_j^\vartheta)^\alpha  Q_{n-1}^{\vartheta j}(\xi^\vartheta_j)  \Bigg ]\, . 
\end{align}
Throughout the proof, $\xi$ denotes a random variable with the uniform 
distribution on $[0,1]$ which is independent of all remaining quantities.
Recalling the choice of $\alpha$ in \eqref{def:alpha} and the fact 
that $\Ec{Q_0^\vartheta(\xi)} = c$, it follows by induction that $\Ec{Q_n^\vartheta(\xi)} = c$ for all $n \geq 0$. From \eqref{def:qn} (or \eqref{def:qn_detailed}), it should be clear that the sequences 
$(Q_n^\vartheta)_{n \geq 0}, \vartheta \in \Theta$, are identically distributed. 
(A formal proof could again be given by induction.)
Next, set $\Delta Q_n^\vartheta := Q_{n+1}^\vartheta - Q_n^\vartheta$ for $n \geq 0$. 
From \eqref{def:qn}, we have for all $t\in [0,1]$, 
\begin{align} \label{diffq}
\Delta Q_n^\vartheta(t)  =
\sum_{i=1}^K \mathbf{1}_{ \Lambda_i^\vartheta}(t) 
\Bigg [ 
(\cR_i^\vartheta)^\alpha \Delta Q_{n-1}^{\vartheta i}(\varphi^\vartheta_i(t)) 
+ \sum_{j \in E_i}  (\cR_j^\vartheta)^\alpha  \Delta Q_{n-1}^{\vartheta j}(\xi^\vartheta_j) 
\Bigg ]\, . 
\end{align}
In particular, one finds
$\Delta Q_n^\vartheta = h(Q_{n-1}^{\vartheta 1}, \ldots, 
Q_{n-1}^{\vartheta K}, \cR^\vartheta, \cS^\vartheta, \Xi^\vartheta)$ for a suitable 
deterministic continuous
function $h$  (see \eqref{diffq}). 
By induction on $n$, it follows that, for any fixed $n \geq 1$, the random 
variables $\Delta Q_n^\vartheta$, $\vartheta \in \Theta$, are identically distributed.

For $i \in [K]$, we now define $\beta_i^\vartheta = \cR^\vartheta_i$ 
if $\xi \in \bigcup_{k \in \Gamma_i} \Lambda_k^\vartheta$ and $\beta_i^\vartheta = 0$ otherwise. 
The random variable $\beta_i^\vartheta$ is distributed like $\beta_i$ defined in \eqref{fix:Y}.
From \eqref{diffq}, using the independence of $(\cR^\vartheta, \cS^\vartheta)$ and
$\Xi^\vartheta$ and the fact that, conditional on $\xi \in \Lambda_i^\vartheta$, 
the relative position of $\xi$ in this interval is uniform and independent 
of $(\cR^\vartheta, \cS^\vartheta)$, it follows from the last display that $\Ec{\Delta Q_n^\vartheta(\xi)^2}$ is equal to 
\begin{align*}
\sum_{i=1}^K & \Ec{\beta_i^{2 \alpha}} \Ec{\Delta Q_{n-1}^\vartheta(\xi)^2} \\
& + \sum_{i = 1}^K \sum_{j_1 \neq j_2 \in E_i \cup \{ i \} } 
\E{\mathbf{1}_{\Lambda_i^\vartheta}(\xi)  (\cR_{j_1}^\vartheta)^\alpha 
(\cR_{j_2}^\vartheta)^\alpha  \Delta Q_{n-1}^{\vartheta j_1}(\xi^\vartheta_{j_1}) 
\Delta Q_{n-1}^{\vartheta j_2}(\xi^\vartheta_{j_2}) }.
\end{align*}
Here the $\Lambda^\vartheta_i$ have distjoint interior, and all the squared terms are collected in the first sum (recall the definition of $\beta_i$ from \eqref{fix:Y}).
Conditional on $\xi \in \Lambda_i^\vartheta$ and on $\cR_{j_1}^\vartheta$ 
and $\cR_{j_2}^\vartheta$, 
the random variables $\Delta Q_{n-1}^{\vartheta j_1}(\xi^\vartheta_{j_1})$ 
and $\Delta Q_{n-1}^{\vartheta j_2}(\xi^\vartheta_{j_2})$ are zero-mean independent random 
variables. Thus, the second term in the last display vanishes. Hence:
\begin{align} \label{boun2}
\Ec{\Delta Q_n^\vartheta(\xi)^2} 
= \sum_{i=1}^K \Ec{\beta_i^{2 \alpha}} \Ec{\Delta Q_{n-1}^\vartheta(\xi)^2} 
= \Ec{\Delta Q_{1}^\vartheta(\xi)^2}  \left(\sum_{i=1}^K \Ec{\beta_i^{2 \alpha}}\right)^{n-1}\,.
\end{align}
 Recalling that $q_2:=\sum_{i=1}^K \Ec{\beta_i^{2 \alpha}} \in (0,1)$, this implies that 
$\Ec{\Delta Q_n^\vartheta(\xi)^2}\le C_2 q_2^n$ for $C_2=1/q_2$.

Next, we aim at showing that, for all $m \geq 1$, we have $\Ec{| \Delta Q_{n}^\vartheta(\xi)| ^m} 
\leq C_m q_m^n$ for some constants $C_m > 0$ and $q_m\in (0,1)$. The previous argument verifies 
this claim for $m = 2$ (and $m=1$ by the Cauchy--Schwarz inequality). Let $m \geq 3$ and 
assume it is true for all $1 \leq \ell \leq m-1$ and let $C_* = \max(C_1, \ldots, C_{m-1})$, 
$q_* = \max(q_1, \ldots, q_{m-1})<1$. Then, again from \eqref{diffq}, we deduce that
\begin{align} 
\Ec{| \Delta Q_n^\vartheta(\xi)|^m}  
&  \leq \sum_{i=1}^K \Ec{\beta_i^{m \alpha}} \Ec{|\Delta Q_{n-1}^\vartheta(\xi)|^m}  \nonumber  \\
& + \sum_{i = 1}^K \sum_{j_1, \ldots, j_{m}} \E{\mathbf{1}_{\Lambda_i^\vartheta}(\xi)  
\prod_{k = 1}^{m} |    \Delta Q_{n-1}^{\vartheta j_k}(\xi^\vartheta_{j_k})| }, \label{summ} 
\end{align}
where the inner sum of the second term ranges over all tuples $(j_1, \ldots, j_{m}) 
\in (E_i \cup \{i \})^m$ for which $\#\{j_1, \ldots, j_m\} \geq 2$. 
(Note that we have dropped some factors $(\cR^\vartheta_j)^\alpha$ in the product in the 
right-hand side.) Now, similarly to the argument above, on the event that
$\xi \in \Lambda_i^\vartheta$, the random variables in the product on the right-hand side 
of the last display are independent for different values of $j_k$.
Since no $j_k$ appears more than $m-1$ times in the product, we can use the induction 
hypothesis to obtain the loose, but sufficient, bound
\begin{align}
\Ec{| \Delta Q_n^\vartheta(\xi)|^m}  
&  \leq \sum_{i=1}^K \Ec{\beta_i^{m \alpha}} \Ec{|\Delta Q_{n-1}^\vartheta(\xi)|^m} \nonumber \\
& + \sum_{i = 1}^K \sum_{j_1, \ldots, j_{m}} (C_* q_*^{n-1})^{\#\{j_1,j_2,\dots, j_m\}} ,  \label{defjj}
\end{align}
with $j_1, \ldots, j_{m}$ as in the sum in \eqref{summ}. From here, since $m\ge 3$ we have
$\sum_{i=1}^K \Ec{\beta_i^{m \alpha}} < 1$ and a simple induction on $n$ shows that 
$\Ec{| \Delta Q_n^\vartheta(\xi)|^m}$ decays exponentially in $n$, as desired. 

The exponential decay for all moments at a uniform point $\xi$ 
can be bootstrapped to yield exponential decay for sufficiently high (hence all) 
moments of the supremum. 
For $m \geq 1$ it follows from \eqref{diffq} that $\Ec{\| \Delta Q_n^\vartheta\|^{m}}$ equals
\begin{align*}
& \E{\max_{1 \leq i \leq K} \left \{\left\| (\cR_i^\vartheta)^\alpha \Delta Q_{n-1}^{\vartheta i}(\varphi^\vartheta_i(t)) + \sum_{j \in E_i}  (\cR_j^\vartheta)^\alpha  \Delta Q_{n-1}^{\vartheta j}(\xi^\vartheta_j)\right \|^m\right \}} \\ 
& \leq \E{\max_{1 \leq i \leq K} \left \{ (\cR_i^\vartheta)^{ m\alpha} \|\Delta Q_{n-1}^{\vartheta i}\|^m + 
\sum_{j_1, \ldots, j_m} \|\Delta Q_{n-1}^{\vartheta i}\|^{\ell_i} \prod_k |\Delta Q_{n-1}^{\vartheta j_k}
(\xi^\vartheta_{j_k})| \right \}}
\end{align*}
with $j_1, \ldots, j_m$ as in the sum in \eqref{summ},
$\ell_i = \#\{1 \leq k \leq m : j_k = i\}$ and the product over $k$ only ranges over those 
values $1 \leq k \leq m$ with $j_k \neq i$. (Observe that $\ell_i < m$ for all $i$.) 
Bounding the maximum by the sum, abbreviating $C_{**} = \max(C_1, \ldots, C_{m})$ 
and $q_{**} = \max(q_1, \ldots, q_{m})$ and using the stochastic independence of
$\Delta Q_{n-1}^{\vartheta 1}, \ldots  \Delta Q_{n-1}^{\vartheta K}, \xi^{\vartheta}_{1}, \ldots, \xi^{\vartheta}_{K}$,
gives
\begin{align*}
& \Ec{\| \Delta Q_n^\vartheta\|^{m}}  \\ & \leq \sum_{i=1}^K \Ec{ \mathcal R_i^{{m} \alpha}} \Ec{\|\Delta Q_{n-1}^\vartheta\|^{m}} 
+ \sum_{i = 1}^K \sum_{j_1, \ldots, j_{{ m}}} \Ec{\| \Delta Q_{n-1}^\vartheta\|^{\ell_i}} (C_{**} q_{**}^{n-1})^{p_i^*} \\
& \leq   \sum_{i=1}^K \Ec{ \mathcal R_i^{{m} \alpha}} \Ec{\|\Delta Q_{n-1}^\vartheta\|^{m}}  + \sum_{i = 1}^K \sum_{j_1, \ldots, j_{{ m}}} \Ec{\|  \Delta Q_{n-1}^\vartheta \|^m}^{\ell_i/m} (C_{**} q_{**}^{n-1})^{p_i^*},
\end{align*}
where we used $p_i^* = \# \{j_k : 1 \leq  k \leq m, j_k  \neq i \}$.
Recall $\fm$ from \eqref{eq:def_fm}; for $m \geq \fm$ we 
have $\sum_{i=1}^K\Ec{ \mathcal R_i^{{ m} \alpha}}< 1$ and a simple induction on $n$ 
shows that $\Ec{\| \Delta Q_n^\vartheta\|^{m}}$ decays exponentially in $n$. 
From there, standard arguments (see, e.g.\ the proof Theorem 6 in \cite{BrSu2013a}) imply that, almost surely, 
$Q^\vartheta_{n}$ converges uniformly and, writing $\Xex^\vartheta$ for its limit, the
identity in \eqref{fix:as} holds.
Furthermore, the random variable $\|\Xex^\vartheta\|$ has  finite polynomial moments of all orders. 

It remains to show that the constructed process is the unique solution to \eqref{fix:X} in distribution in $\mathcal M^c$. As this part does not require significantly new ideas, we remain brief. Let $\mu \in \mathcal M^c$ and consider a set of independent random variables $\{\mathcal Y_0^\vartheta, \vartheta\in \Theta\}$  
with $\Law(\mathcal Y_0^\vartheta) = \mu$ that is independent of $\{(\mathcal R^\vartheta, \mathcal S^\vartheta, \Xi^\vartheta): \vartheta \in \Theta\}$. Then, analogously to \eqref{def:qn}, 
for $n \geq 1$ and $\vartheta \in \Theta$, recursively define
\begin{align*}
\mathcal Y_{n}^\vartheta 
= \Phi(\mathcal Y_{n-1}^{\vartheta1}, \ldots, \mathcal Y_{n-1}^{\vartheta K}, \mathcal R^\vartheta, \mathcal S^\vartheta, \Xi^\vartheta).
\end{align*}
By repeating the steps of the proof of uniform convergence of $(Q_n^\vartheta)_{n \geq 0}$, 
we find that the sequence $(\mathcal Y_n^\vartheta)_{n\ge 0}$ converges almost surely uniformly to a 
solution $\mathcal Y^\vartheta$ of \eqref{fix:X}. Furthermore, by following the same inductive 
arguments as before, one can show that $\Ec{(Q_n^\vartheta(\xi) - \mathcal Y_n^\vartheta(\xi))^2} \to 0$ 
exponentially fast. (To be precise, we have the same bound as in \eqref{boun2} for this term 
with $\Ec{\Delta Q^\vartheta_1(\xi)^2}$ replaced by $\Ec{(\mathcal Y_0^\vartheta(\xi) - c)^2}.$)
The fact that $\Ec{|Q_n^\vartheta(\xi) - \mathcal Y_n^\vartheta(\xi)|^{m}} \to 0$ exponentially fast for all $1 \leq m \leq \mathfrak m$ follows as in \eqref{summ} and \eqref{defjj}.
Based on this, the same steps as before yield that $\Ec{\|Q_n^\vartheta - \mathcal Y_n^\vartheta\|^{m}} \to 0$ for all $1 \leq m \leq \fm$.
Hence, $\mathcal Y^\vartheta = \Xex^\vartheta$ almost surely.
This concludes the proof.
\end{proof}

\begin{proof}[Proof of Theorem~\ref{thm:treeuniq} i) and ii)]
The statement in \emph{ii)} is an immediate consequence of Proposition~\ref{prop:treeuniq} 
and Proposition~\ref{prop:distghp} \emph{ii)} since $\Ec{\nu^\fX}$ remains invariant upon 
replacing $\cX$ by the support of the measure. The uniqueness claim for the process in 
Theorem~\ref{thm:treeuniq} \emph{i)} can be deduced as follows: 
Let $\Xex$ be the process constructed in Proposition~\ref{prop:conlimit}, and
assume that $\cY$ is a continuous excursion satisfying \eqref{fix:X} with
$\Ec{\mathcal Y(\xi)} = \Ec{\Xex(\xi)}$. Then, by Proposition~\ref{prop:treeuniq}, 
$\Ec{\nu_n^{\fT_\Xex}} = \Ec{\nu_n^{\fT_{\cY}}}$ for all $n \geq 1$. 
Let $f_n :  \cM_1(\bbM_n) \to \cM_1([0, \infty))$ be  the map that, 
to $\nu \in \cM_1(\bbM_n)$, associates the law of $\sup \{A_{0,i}: 0 \leq i \leq n\}$,
where $\Law(A) = \nu$.
Then, as $n \to \infty$, we have the following weak convergences: 
$$
f_n \big( \Ec{\nu_n^{\fT_\Xex}} \big) \to \Law(\|\Xex\|), 
\quad \text{and}\quad 
f_n \big( \Ec{\nu_n^{\fT_{\cY}}} \big) \to \Law(\|{\cY}\|).$$ 
Hence, $\Law(\|{\cY}\|) = \Law(\|\Xex\|)$, and in particular, by Proposition~\ref{prop:conlimit}, 
$\|{\mathcal Y}\|$ must have finite moments of all orders. The uniqueness statement under the 
finite moment condition in Proposition~\ref{prop:conlimit} then implies 
that $\Law({\cY}) = \Law(\Xex)$.
\end{proof}

\subsection{Attractiveness of the fixed points of \eqref{fix:tree}}
\label{sec:attractiveness_fix}

In this section, we prove Theorem~\ref{thm:unique_tree-GHP} and 
construct the counter-examples of Proposition~\ref{prop:counter_ex}. 
We start with the following lemma that provides a height function representation of 
random elements in $\bbT^\GHP_\tf$. 
For technical reasons, we work in the space $\Do$ of c{\`a}gl{\`a}d functions $f : [0,1] \to \R$ satisfying
$f(t) = \lim_{s \uparrow t} f(s)$ for all $t \in (0,1]$ and for which the right-hand limits $f(t+) = \lim_{s \downarrow t} f(s)$ exist for all $t \in [0,1)$. The set $\Do$ is equipped with the Skorokhod $J_1$-topology (\cite[Chapter 3]{bil68}). 
Let $\Dex \subset \Do$  be the set of non-negative functions $f \in \Do$ with $f(0) = f(0+) = 0$ and $f(t) - f(t+) \geq 0$ for all $t \in [0,1]$. 
Analogously to continuous excursions, every $f \in \Dex$ encodes a compact rooted measured real tree $(\sT_f, d_f, \mu_f, \rho_f)$ satisfying \textbf{C1} via the construction outlined in 
Section~\ref{sec:rtc} (\cite[Lemma 2.1]{duq1}). The proof of the following lemma is found in the appendix.

\begin{lem}\label{lem:extra}
Let $\nu$ be a probability distribution on $\bbT^{\GHP}_\tf$. 
Then, there exists a probability distribution $\eta$ on $\mathbb D_{\rm{ex}}$ such that $\Law(\mathfrak T_\Xex) = \nu$ for a random variable 
$\Xex$ with law $\eta$.
\end{lem}


\begin{proof}[Proof of Theorem~\ref{thm:unique_tree-GHP}]
{\em i)} For $m, n \geq 1$,  write  $\tilde D^{(m)}_n$ for a generic random variable with 
distribution $\nu_n^{\phi^m(\nu)}$. Recall from the proof of 
Proposition~\ref{prop:treeuniq} that, if $\fT$ satisfies \eqref{fix:X}, then 
for $n \geq 1$, a r.v.\ $D_n$ with distribution $\Ec{\nu_n^{\fT}}$ satisfies the fixed point equation \eqref{perpp}, that is,
$D_n \stackrel{d}{=} U_n D_n + V_n$,
where $U_n$ is a real-valued r.v.\ with $U_n\in(0,1)$ a.s.\ that is given in \eqref{def:aa},
$V_n$ is a random matrix with non-negative entries given in \eqref{def:bb}, 
and $(U_n, V_n), D_n$ are independent.  
Since $\phi^{m+1} = \phi \circ \phi^m$, the arguments from the proof of 
Proposition~\ref{prop:treeuniq} show that
$$\tilde D^{(m+1)}_n \stackrel{d}{=} U_n \tilde D^{(m)}_n + V_n^{(m)}\,,$$ 
where $(U_n, V_n^{(m)})$ and $\tilde D^{(m)}_n$ are independent and $V_n^{(m)}$ is given 
as $V_n$ in \eqref{def:bb} upon replacing each copy of $D_\ell^{(k)}$ 
by a copy of $\tilde D^{(m)}_\ell$ and maintaining the independence between $\cR, \cS, \bJ$ and the copies
of  $\tilde D^{(m)}_1, \ldots, \tilde D^{(m)}_{n-1}$.
The remainder of the proof consists in showing that, in distribution 
(and in mean)
for all $n \geq 1$, we have $\tilde D^{(m)}_n \to  D_n$ as $m \to \infty$. 
By Proposition \ref{prop:distghp} (iii), this implies the assertion. 
The proof relies on a contraction argument.
In the following, as matrices in $\bbM_{n}$ are symmetric with zeros on the diagonal, 
we use the natural identification of $\bbM_n$ with $\R^{{n+1 \choose 2}}$. 
For $p \in \N, p \geq 1$, let $\cM_1^1(\R^p)$ be the set of probability measures on $\R^p$ 
whose max-norm $\|\cdot \|_\infty$ is integrable. 
Recall the \emph{Wasserstein} distance on $\cM_1^1(\R^p)$ defined by, 
for $\mu_1, \mu_2 \in \mathcal \cM_1^1(\R^p)$, 
$$\ell_1(\mu_1,\mu_2) 
= \inf \big\{\Ec{\|X - Y\|_{\infty}} : \Law(X) = \mu_1, \Law(Y) = \mu_2 \big\}.$$
For random variables $X, Y$ in $\R^p$, we abbreviate $\ell_1(X,Y) := \ell_1(\Law(X), \Law(Y))$.
We proceed by induction on $n\ge 1$, and assume that, for all $1 \leq i < n$,
$\ell_1(\tilde D^{(m)}_i, D_i) \to 0$ as $m \to \infty$. 
First, observe that $\|G^{\mathbf j}_i(A)\|_\infty \leq \|A\|_\infty$ for all 
matrices $A$ and linear operators $G^{\bj}_i$ in and around \eqref{g1}--\eqref{g2}.  
Since the random variables $\cR_i, i \in [K]$, lie in $[0,1]$, 
conditioning on $(\cR, \cS, \bJ)$ shows that
$$\ell_1(\tilde D^{(m+1)}_n,  D_n) 
\leq \Ec{U_n} \cdot \ell_1(\tilde D^{(m)}_n,  D_n) 
+ \sup_{1 \leq \ell \leq n-1} \ell_1(\tilde D^{(m)}_\ell,  D_\ell).$$ 
From here, since $\Ec{U_n} < 1$ and $\ell_1(\tilde D^{(m)}_\ell,  D_\ell) \to 0$ 
for all $1 \leq \ell \leq n-1$ by the induction hypothesis, it follows by induction on $m$ 
that the sequence $(\ell_1(\tilde D^{(m)}_n,  D_n))_{m \geq 1}$, is bounded. 
Taking the limit superior  in the last display then shows that
$\ell_1(\tilde D^{(m)}_n, D_n) \to 0 $ as $m \to \infty$ since $\Ec{A_n} < 1$. 
 This concludes the proof of the induction step, and it only remains to
establish the base case $n = 1$. 
Note that, under our identification, $\tilde D^{(m)}_1$ and $ D_1$ are  real-valued
and non-negative random variables. 
Distributional convergence $\tilde D^{(m)}_1 \to D_1$ as $m \to \infty$ 
follows immediately from Theorem~2 b) in \cite{durlig}. 
Furthermore, by construction $\phi^m$ preserves the expected distance 
between the root and an independent random point and therefore
$\Ec{\tilde D^{(m)}_1} = \Ec{D_1}$ for all $m \geq 1$. 
But convergence in $\ell_1$ for non-negative random variables is equivalent 
to distributional convergence together with convergence of the mean, see, 
e.g.\ \cite[Lemma 8.3]{BickelFreedman}. This concludes the proof of {\em i)}.

{\em ii)} If $\nu\in \cM_1(\bbT_\tf^\GHP)$ the claim 
follows easily from the proof of Proposition~\ref{prop:conlimit}. Indeed, by 
Lemma~\ref{lem:extra}, there exists a probability distribution $\nu^*$ on $\Dex$
such that the (isometry class of the) tree encoded by a random variable with law $\nu^*$ has 
distribution $\nu$.
Then, for any $n\ge 0$, $\phi^n(\nu)$ is the distribution of the real tree encoded 
by $Q_n^\emptyset$ from the proof of Proposition~\ref{prop:conlimit} when $Q_0^\vartheta$,
$\vartheta\in \Theta_n$, are i.i.d.\ with distribution $\nu^*$; see Remark \ref{rem:al}. 
(All arguments in the proof of Proposition~\ref{prop:conlimit} apply analogously to
c{\`a}gl{\`a}d functions.)
As we have seen there, $Q_n^\emptyset$ converges almost surely to a solution 
of \eqref{fix:tree}. (It is here where we need the moment assumption on $\|\mathfrak T\|$.)
This proves the claim.
\end{proof}

\begin{proof}[Proof of Proposition~\ref{prop:counter_ex}]\label{pf:counter_ex}
{\em i)} The example we provide generalizes the one by \citet{albgol2015} in the special case of 
Example {\bf 1)}. Let $\tau = \Law((\cS, \cS))$ where $\Law(\cS) = \eta$. 
(In other words, we choose $\Law(\cR) = \eta$ and the coupling $\cR = \cS$.) 
Let $\nu_c$ be the unique law solving \eqref{fix:tree} in $\bbT^\GHP_\tf$ with $\Ec{d_c(\rho_c,\zeta_c)}=c$, where 
$\zeta_c$ is sampled according to  $\mu_c$ on $\cT_c$ and $(\cT_c, d_c, \mu_c, \rho_c)$ has distribution $\nu_c$. Such a solution exists by Theorem~\ref{thm:treeuniq}. 
The idea is to construct another random compact rooted measured 
real tree in $\bbT^\GHP$ by appending massless hair to a tree sampled from $\nu_c$. 
 
Choose an integer $\kappa\ge 1$. Let $c_0^+$ be the set of all non-negative sequences converging to zero. 
Let $\fX = (\cX, d, \mu, \rho)$ be a fixed compact measured 
metric space.  For a sequence of points $u = (u_n)_{n \geq 0}$ in $\cX$ and $s \in c_0^+$, let
$\chi_1(u,s)$ be the isometry class of the space obtained upon attaching $\kappa$ disjoint segments of length $s_i$ at the point $u_i$, each by one extremity, for all $i \geq 1$. As
$s \in c_0^+$, the resulting space is compact. Hence, $\chi_1: \cX^\N \times c_0^+ \to \bbK^{\GHP}$ is well-defined. 
The map is continuous as proved in  Lemma~\ref{lem:psi1_cont} in the appendix.

Let $\cP$ be a Poisson point process with intensity measure $\mu \otimes s^{-1-1/\alpha} ds$ 
on $\cX \times [0,\infty)$. $\cP$ can be considered a $(\cX^\N \times c_0^+)$-valued random variable. The isometry class $f(\mathcal P)$ is a random variable whose distribution does not depend on the choice of the representative of the isometry class of $\fX$. 
Hence, this operation defines a map $\psi_1 : \bbK^{\GHP} \to \cM_1(\bbK^{\GHP})$, 
 which is continuous (Lemma~\ref{lem:psi1_cont} in the appendix) and a version at the level of 
measures $\Psi_1 : \cM_1(\bbK^{\GHP}) \to \cM_1(\bbK^{\GHP})$ that is defined by
$$\Psi_1 (\upsilon)(A) = \Ec{ \psi_1(Y)(A)}, \qquad \text{with} \qquad \Law(Y) = \upsilon. $$
The important observation is that, because of the choice of intensity measure with exponent $-1-1/\alpha$ in the length, the effects of a multiplication of the mass by $C$ and of a multiplication lengths by $C^\alpha$ are equivalent, and thus $\Psi$ and $\Psi_1$ commute. 
In other words, for $\upsilon \in \cM_1(\mathbb K^\GHP)$, we have 
$$\Psi(\Psi_1 (\upsilon), \Law((\cS,\cS))) 
=   \Psi_1 (\Psi(\upsilon, \Law((\cS,\cS)))).$$
It follows immediately, that for any fixed point $\nu$ of \eqref{fix:tree}, the measure 
$\Psi_1 (\nu)$ also solves \eqref{fix:tree}. 
In particular, $\Psi_1(\nu_c)$ is such a fixed point and it charges only 
$\bbT^\GHP \setminus \mathbb T^\GHP_\tf$. 

Let $\fT = (\sT, d, \mu, \rho)$ be a random variable with distribution $ \Psi_1 (\nu_c)$. 
The height $\|\mathfrak T\|$ is at least as large as the length of the longest attached segment. Hence,  
for $h>0$, we have 
\begin{align}
\pc{\|\mathfrak T\|\ge h} \ge \textstyle \pc{\Po(\int_{h}^\infty s^{-1 - 1/\alpha}ds) \geq 1}  = 1 - \exp(-\alpha h^{-1/\alpha}), \label{comm1}
\end{align} 
where $\Po(\lambda)$ denotes a Poisson random variable with parameter $\lambda$. 
Since $\fm\ge 1/\alpha$  (see above \eqref{eq:def_fm}) it follows readily that 
$\Ec{\|\mathfrak T\|^{\fm}}=\infty$. 

Furthermore, for different values of $\kappa$ the corresponding laws are mutually 
singular. Finally, note that there is nothing specific to massless \emph{segments} in 
the argument. Indeed, we can replace the segments by loops, or equivalently 
identify the extremities of all the segments together when $\kappa\ge 2$. 
This proves that there exist fixed points which are real trees with probability zero.

{\em ii)}  The measure $\Psi_1(\nu_c)$ we have just constructed charges only 
spaces in $\bbT^\GHP\setminus \bbT^\GHP_\tf$ since the hair has no mass; we now provide a modified 
example where (a) the metric part of the spaces is the unique (up to scaling) 
fixed point in $\bbT_\tf^\GHP$ 
on which we graft hair, just as in {\em i)} before, but (b) we modify the mass measure so that it 
also charges the hair. We then prove that the diameter of spaces obtained by iteration does not converge in distribution to the diameter of $\fT_\Xex$. 

 Let $\fX^\circ = (\cX^\circ, d^\circ, \mu^\circ, \rho^\circ)$ be any \emph{fixed} compact measured 
metric space where $\mu^\circ$ has no atoms; to fix ideas, one may take $\cX^\circ = [0,1]$ equipped with the
Euclidean metric, the Lebesgue measure, and $\rho^\circ = 0$. From $\fX^\circ$, we construct the ``hairy''
compact measured metric space $(\cX, d, \mu, \rho)$ using the decoration by a Poisson point process as above. Here, and subsequently, we focus on the case $\kappa = 1$.
Now, we construct a new measure $\mu_\cP$ on $\cX$
as follows. We first set $\mu_\cP(\cX^\circ)=0$. Then, for $(u,s)\in \cP$, we associate a total 
$\mu_\cP$ mass $\min\{s, 1\}^{1/\alpha+1}$ to the segment of length $s$ attached to $u$; we distribute this mass along this segment with density $\alpha \cdot (1+\alpha)^{-1} \cdot r^{1/\alpha}\I{0\le r\le s}dr$ 
if $s\le 1$, and with density $e^{-r}(1-e^{-r})^{-1}\I{0\le r\le s}dr$ if $s>1$.
Then, 
\begin{align*} \Ec{\mu_\cP(\cX)}
=\int_0^\infty \min\{s,1\}^{1/\alpha+1} s^{-(1+1/\alpha)}ds
& =\int_0^1 ds + \int_1^{\infty} s^{-(1+1/\alpha)} ds \\ & = 1 + \alpha < \infty\,,
\end{align*}
and it follows that $\mu_\cP(\cX)<\infty$ almost surely. We let $\mu^*$ be the unique 
probability measure on $\cX$ that is proportional to $\mu+\mu_\cP$.
For a random point $\zeta^*$ sampled according to $\mu^*$, we have
\begin{align*}
\Ec{d(\rho,\zeta^*)} &
\le \E{\int d(\rho,u) (\mu+\mu_\cP)(du)} \\ & \le \Ec{d(\rho,\zeta)} + \E{\int d(\rho,u) \mu_\cP(du)}\,,
\end{align*}
and 
\begin{align*}
\E{\int d(\rho,u) \mu_\cP(du)} 
\le \|\fX^\circ\|+ 1 + \sup_{s\ge 1} \int_0^s e^{-r}(1-e^{-r})^{-1}dr
= \|\fX^\circ\| + 2.
\end{align*}
It follows that $\lambda^*:=\Ec{d(\rho,\zeta^*)}<\infty$, so we may 
rescale the metric and define $d^*(\cdot,\cdot)=d(\cdot,\cdot)\times c/\lambda^*$.  
Finally, let 
$\mathfrak X^*=(\mathcal X,d^*,\mu^*,\rho)$ which satisfies $\Ec{d^*(\rho,\zeta^*)}=c$ and $\Ec{\|\fX^*\|^{1/\alpha}}=\infty$. 
As for the function $\psi_1$ in part \emph{i)}, technical proofs which we omit show that the isometry class of $\fX^*$ is a random variable whose  
distribution only depends on the \GHP-equivalence class of 
$\mathfrak X^\circ$. We denote this distribution  by $\nu^*=\nu^*_c\in \cM_1(\bbK^\GHP_\tf)$.
Note that it is not difficult to construct explicitly a random variable $g \in \Cex$ in terms of the points in $\mathcal P$ such that $\fT_g = \fX^*$ (with respect to isometry classes).

Fix $n \geq 1$ and let $\fX^*_n$ be a random variable with distribution  $\phi^n_\GHP(\nu^*)$. We now study the diameter of $\fX^*_n$. 
To this end, let $\tilde \nu$ be the distribution of the isometry class of $\tilde \fX =(\fX, d, \mu^*, \rho)$ and $\tilde \fX_n$ be a random variable with distribution $\phi^n_\GHP(\tilde \nu)$.
As the rescaling for distances is deterministic, in distribution, we obtain $\fX_n^*$ from $\tilde \fX_n$ by multiplying distances in $\tilde \fX_n$ by $c/\lambda^*$.  
We can think of the random variable $\tilde \fX_n$ as constructed from a family of independent copies $\{\tilde \fX^\vartheta : \vartheta \in \Theta_n\}$ of $\tilde \fX$ relying on the family of independent rescaling factors $\{(\mathcal R^\vartheta, \mathcal S^\vartheta) : \vartheta \in \Theta_n\}$, where glue points are chosen with respect to the mass measures. (Formally, we can use the space $\fT_{Q_n^\emptyset}$ with $Q_n^\emptyset$ from the proof of Proposition~\ref{prop:conlimit} when $Q_0^\vartheta, \vartheta \in \Theta_n$, are i.i.d.\ copies of $g$ mentioned above.) Recall that the 
distribution of $\cP$ has been tailored precisely so that it is not affected by the rescaling: 
 the distribution of lengths of the segments on the union of the rescaled copies of the 
$\tilde \fX^\vartheta$  is equal to the distribution of lengths of the segments 
on $\tilde \fX$ \footnote{This is true, regardless of the fact that these segments may not be hair any 
longer in $\tilde \fX$, since the gluing occurs after the rescalings.}. In particular, it follows that the diameter of $\tilde \fX_n$ is bounded from below in a stochastic sense by the 
largest segment appearing in the Poisson process. From these considerations and \eqref{comm1}, it follows that, for $x > 0$, 
$$\pc{\text{diam}(\fX_n^*) \geq \lambda^* x/c} = \pc{\text{diam}(\tilde \fX_n) \geq x} \geq 1 - \exp(-\alpha x^{-1/\alpha}).$$
As $\| \fT_\Xex\|$ has finite moments of all orders and the last display is valid for all $n$, it follows that, for all $x$ 
large enough, the left hand side of the last display does not converge to $\pc{\text{diam}(\fT_\Xex^*) \geq \lambda^* x / c}$. This concludes the proof.
\end{proof}

\section{Proofs of the geometric properties}\label{sec:proofs_frac}

All arguments in this section rely on a generalization of the decomposition of the 
tree $\sT_\Xex$ into $K$ subtrees corresponding to $\sT_{\Xex^1}, \ldots, \sT_{\Xex^K}$ when
the fixed point equation \eqref{fix:X} is developed for several levels. 
Throughout Section~\ref{sec:proofs_frac}, let $\fT^\vartheta = (\sT^\vartheta, d^\vartheta, 
\mu^\vartheta, \rho^\vartheta) := \fT_{\Xex^\vartheta}$  with $\Xex^\vartheta$ as in 
Proposition~\ref{prop:conlimit} where, for the sake of convenience, 
we assume $\int_0^1 \Ec{\Xex^\vartheta(t)} dt = 1$.

The structural tree $\Gamma$ describes the way the trees $\sT^1, \dots, \sT^K$ (or rather, 
their images under the canonical surjection $\cp$), are arranged in $\sT = \sT_\Xex$. 
Similarly, we introduce a structural tree for the decomposition at level $n\ge 1$ as follows: 
Let $\Gamma^{n}$ be the plane tree on $K^n$ nodes labelled with elements of $\Theta_n$ that 
describes the adjacencies between the  subtrees at level $n$ of the decomposition, that is,
$\mathfrak T^\vartheta, \vartheta \in \Theta_n$, when carried out up to this level. 
We think of $\Gamma^n$ as rooted at $1\dots 1$. Observe that $\Gamma^n$ is a random object, 
since the adjacency relations depend on the random points used to glue the trees. The tree 
$\Gamma^n$ is measurable with respect to
$\{(\cS^\vartheta, \Xi^\vartheta) : 0 \leq |\vartheta| < n\}$. Analogously to 
the construction on page \pageref{list}, we shall consider $\sT = \sT_\Xex$ as 
the disjoint union $\sqcup_{\vartheta \in \Theta_n} \sT^\vartheta$ upon identifying each root 
of a tree $\sT^\vartheta, \vartheta \neq 1\ldots 1$ with the glue point on the associated 
parent tree. We write $\cp_n:  \sqcup_{\vartheta \in \Theta_n} \sT^\vartheta \to \sT$ for 
the corresponding canonical surjection. 
See Figure~\ref{fig:phi-squared} for an 
illustration.

\begin{figure}[htb]  
\centering
\includegraphics[scale=.40]{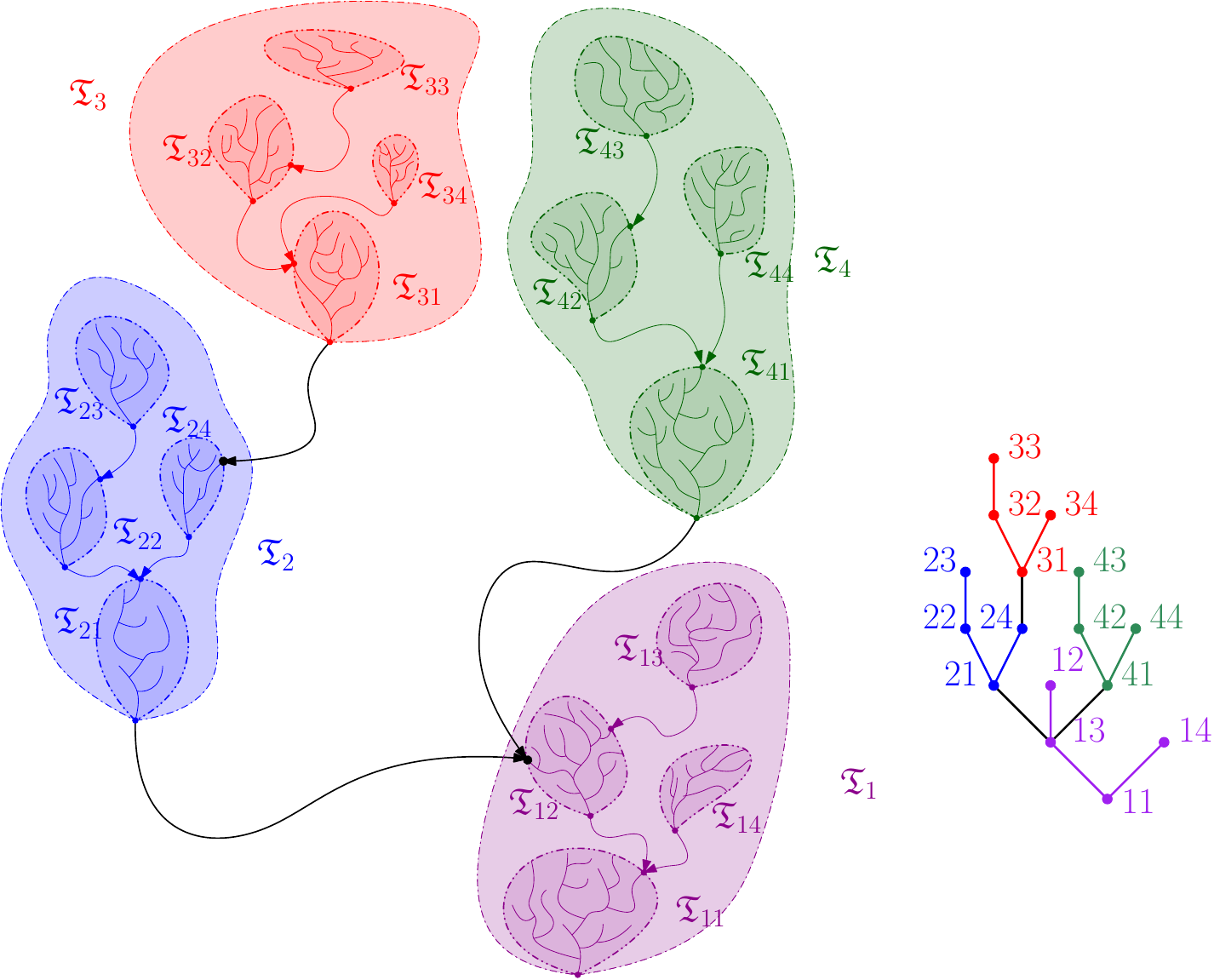}
\caption{\label{fig:phi-squared} The construction of $\fT$ from the first two levels of $\Theta$,
and the corresponding structural tree $\Gamma^2$ on the right-hand side.}
\end{figure}

\subsection{The upper Minkowski dimension: Proof of Theorem~\ref{thm:main2}} 
\label{sec:upbound}


In the following, for the sake of clarity, we write $a \cX$ for the metric space 
$(\cX, a d)$, where a constant $a>0$ and a metric space $(\cX,d)$. 
The approach here is to turn the heuristic presented in \eqref{heuris} into a rigorous argument. 

Given $x>0$, the idea consists in (1) expanding the recurrence for each subspace that has a scaling factor (for distances) greater than $x^\alpha$, until we are left with spaces (potentially in different generations) with scaling factor no greater than $x^\alpha$; (2) prove that, with high probability and for any $\epsilon>0$, we can cover the whole space by using a single ball of radius $x^{\alpha-\epsilon}$ per subspace, which ends up being a collection of no more than $x^{-1-\epsilon}$ balls, and finally (3) show that the bounds ensure that this occurs for all $x$ small enough with probability one. Let us move on to the details. 

With $x>0$ fixed, we first look for the suitable portion of the space that should intuitively be covered with balls of size roughtly $x^\alpha$. For $\vartheta\in \Theta\setminus \{\varnothing\}$, we let $\vartheta^+$ be the direct ancestor (parent) of $\vartheta$ in $\Theta$. Let $\sL_x:=\{\vartheta \in \Theta: \cV(\vartheta)\le x, \cV(\vartheta^+)>x\}$. The set $\sL_x$ separates the root $\varnothing$ from infinity, and no two elements of $\sL_x$ lie on the same ancestral path. It follows that $\sum_{\vartheta\in \sL_x} \cV(\vartheta)=1$ almost surely, and $\sL_x$ provides a natural partition of the entire space $\cT$. The cardinality of $\sL_x$ has been studied finely by Janson and Neininger \cite{JaNe2008}; we only need rather weak results: it turns out that under the conditions of Theorem~\ref{thm:main2} (their Condition A), as one would expect, $\Ec{\# \sL_x} = O(1/x)$ as $x\to 0$, and therefore, for any $\epsilon>0$, by Markov's inequality,
\[\pc{\sL_x > x^{-1-\epsilon}} \le C x^{\epsilon},\]
for some constant $C$. 

Now, for each $\vartheta \in \sL_x$, the corresponding portion of the space is stochastically smaller (coupling of the scale factor) than $x \cX$, and since the height has moments of all orders, by choosing $k$ as the smallest integer such that $k\epsilon > 2$, we have 
\begin{align*}
\pc{\exists \vartheta\in \sL_x: \cV(\vartheta)^\alpha \|\fT^\vartheta\|> x^{\alpha-\epsilon}}
& \le \pc{\exists \vartheta\in \sL_x: \|\fT^\vartheta\|> x^{-\epsilon}}\\
& \le Cx^\epsilon + x^{-1-\epsilon} \Ec{\|\fT\|^k} x^{k\epsilon}\\
& \le Cx^\epsilon + C' x^{1-\epsilon},
\end{align*}
where the constant $C'$ only depends on $\epsilon$. 

As a consequence, using balls of radius $x^{\alpha-\epsilon}$ centered at the roots $\rho^\vartheta$ for $\vartheta \in \sL_x$, it follows that $N_\cT(x^{\alpha-\epsilon})$ is such that $\pc{N_\cT(x^{\alpha-\epsilon})>x^{-1-\epsilon}}\le C'' x^\epsilon$ for $\epsilon\in (0,1/2)$. Choosing $x=2^{-n}$, a straightforward application of the Borel--Cantelli lemma implies that with probability one, $N_{\cT}(2^{-n(\alpha -\epsilon)}) \le 2^{n(1+\epsilon)}$ for all but finitely many natural numbers $n$. This is then easily extended to any value of $x$ by considering the unique $n\in \N$ for which $2^{-n}<x \le 2^{-n+1}$, and the fact that one then has $\cN_\cT(x^{\alpha-\epsilon}) \le \cN(2^{-n})$. It follows that with probability one, 
\[\limsup_{x\to 0} \frac{\log \cN_{\cT}(x^{\alpha-\epsilon})}{\log 1/x^{\alpha-\epsilon}} \le \frac{1+\epsilon}{\alpha-\epsilon}\,.\]
Since $\epsilon\in (0,1/2)$ is arbitrary, this completes the proof of Theorem~\ref{thm:main2}.

\subsection{Lower bound on the Hausdorff dimension: Proof of Theorem~\ref{thm:main3}} 
\label{sec:lowerhaus}
For a real tree $\sT_f$ with $f \in \Cex$, natural candidates for measures giving lower bounds on $\DimH(\sT_f)$ 
using the mass distribution principle \eqref{massdistributionprinciple}
are the push-forward measures $\mu^*_f := \mu^* \circ \pi_f^{-1}$ of measures $\mu^*$ on $[0,1]$ 
under the surjection $\pi_f: [0,1] \to \sT_f$. 
In our setting, in the case $\cR = \cS $ which covers both examples 
in \eqref{eqe} and \eqref{eqZ}, it is intuitive that the Lebesgue measure on $[0,1]$, 
or, equivalently, the canonical measure on $\sT$, leads to an efficient choice. 
But when scaling factors in time and space are independent 
such as in example \eqref{eqH}, it turns out that one first has to find an appropriate 
time-change on the unit interval in order to re-correlate the masses of fragments in the 
tree with the extent of distances in the corresponding subtrees. 
Time-changes constructed in this context are typically 
random, and one is led to construct the pair ``tree+time-change'' simultaneously. 
It is  this situation in which the almost sure construction in Section~\ref{sec:construction_as} 
turns out especially useful.  

The construction of a suitable time-change is presented in the following proposition which requires the introduction
of additional notation.
Let $\Phi_1$ be the analog of the 
map $\Phi$ defined in \eqref{def:phi} for $\alpha = 1$ that combines functions of the 
space $\Co_1 = \{f \in \Co : f \geq 0, f(0) = 0, f(1) = 1\}$. In other words:
$\Phi_1: \Co_1^K \times \Sigma_K^2 \times (0,1)^K \to \Co_1$, such 
that $g = \Phi(f_1, \ldots, f_K, \br, \bs, \bu)$ is the unique function of $\Co_1$ for which 
\begin{align*}
g(x) - g(y) =  r_{w_\ell}
\left[f_{w_\ell}(\varphi_{w_\ell}(x)) -  f_{w_\ell}(\varphi_{w_\ell}(y))\right],  
\end{align*}
for all $1 \leq \ell \leq L$ and $x,y \in I_\ell$ (see around the definition of $\Phi$ for the 
notation). 
Further, for $i \in \Gamma^o$ let $\Lambda_i^{\vartheta-}$ and $\Lambda_i^{\vartheta+}$ 
be the two half-open intervals forming $\Lambda_i^\vartheta$ 
where $\inf \Lambda_i^{\vartheta-} \leq \sup  \Lambda_i^{\vartheta+}$. 
For the various quantities playing a role in the proposition, we refer the reader to 
Section~\ref{tech-expan}.
\begin{prop}\label{prop:time_change}
Almost surely, for any $\vartheta\in \Theta$, there exists a probability measure $\mu^{\vartheta}$ 
on $[0,1] = \Lambda^\vartheta_\emptyset$, such that, for every $\sigma \in \Theta$, 
\begin{align*} 
\mu^{\vartheta}(\Lambda^\vartheta_\sigma)= \cV(\vartheta \sigma) / \cV(\vartheta).
\end{align*} 
The (random) distribution function $F^\vartheta$ of $\mu^{\vartheta}$ is 
measurable with respect to $\{\cR^{\vartheta \sigma}, \cS^{\vartheta \sigma}, 
\Xi^{\vartheta \sigma}: \sigma \in \Theta \}$, and if $\xi$ is an independent 
random variable uniform in $[0,1]$, then $\Ec{F^\vartheta(\xi)} = 1/2$. 
All $F^\vartheta$, $\vartheta\in\Theta$, have the same law and satisfy
\begin{align} \label{fix:tau}
F^{\vartheta} = \Phi_1( F^{\vartheta 1}, \ldots, 
 F^{\vartheta K}, \cR^\vartheta, \cS^\vartheta, \Xi^\vartheta)\,. 
\end{align}
Furthermore, in distribution, $F^\vartheta$ is the unique continuous distribution 
function on $[0,1]$ satisfying \eqref{fix:tau}.
\end{prop}

\begin{proof}
Let $\mu_n^\vartheta$ be the unique probability measure on $[0,1]$ which corresponds to 
the mass distribution on the partition $\{\Lambda^\vartheta_\sigma: \sigma \in \Theta_n \}$
such that $\mu_n^\vartheta(\Lambda^\vartheta_\sigma) = \cV(\vartheta \sigma) / \cV(\vartheta)$, and
$\mu_n^\vartheta$ has constant density on each of the sets $\Lambda^\vartheta_\sigma$, 
$\sigma \in \Theta_n$. 
This construction is consistent 
in the sense that, for $m \geq n$ and $\sigma \in \Theta_n$, we have
$\mu_m^{\vartheta}(\Lambda^\vartheta_\sigma) = \mu_n^{\vartheta}(\Lambda^\vartheta_\sigma).$
The measure $\mu^{\vartheta}$ is constructed as the almost sure limit of 
the sequence of random measures $(\mu_n^\vartheta)_{n\ge 0}$.
Denote by $F_n^\vartheta$ the distribution function of $\mu_n^\vartheta$. 
By construction, almost surely,
\begin{align} \label{FNr} F_n^{\vartheta} = \Phi_1( F_{n-1}^{\vartheta 1}, \ldots, 
 F_{n-1}^{\vartheta K}, \cR^\vartheta, \cS^\vartheta, \Xi^\vartheta). \end{align}
Note that $\{(F_n^\vartheta)_{n\ge 0}:  \vartheta \in \Theta\}$ is a family of 
identically distributed random variables. 
The uniform convergence of $F_n^\vartheta$ is shown analogously to the convergence of 
$Q_{n}^\vartheta$ in Proposition~\ref{prop:conlimit}: write $F_n^\vartheta$ as 
a telescoping sum, prove almost sure convergence of the corresponding series at an
independent uniform point, and bootstrap to almost sure uniform convergence; 
we omit the details. 
Here, it is important to note that, since $F_0^\vartheta(t)  = t$ for all $t \in [0,1]$, 
one can verify inductively that 
$\Ec{F_n^\vartheta(\xi)} = 1/2$ for all $n \geq 1$. 
The relevant constant that takes a value smaller than one, which allows to establish 
the convergence of $F_n^\vartheta$ at a uniformly chosen point by a contraction argument 
is $\sum_{i=1}^K \Ec{\beta_i^2}$ 
(this is similar to the constant in \eqref{boun2} with $\alpha$ replaced by $1$, just 
as $\Phi_1$ is similar to $\Phi$ with $\alpha$ replaced by $1$).
From the convergence of $F_n^\vartheta$ and \eqref{FNr}, it follows that
$F^\vartheta$ satisfies \eqref{fix:tau} and is a continuous distribution function. 
The induced measure $\mu^\vartheta$ satisfies the desired properties.
\end{proof}

In the remainder of the section (and here only), let 
$\bar \mu = \mu^\emptyset \circ \pi_\Xex^{-1}$
with $\mu^\emptyset$ as in Proposition~\ref{prop:time_change}.

\begin{lem} \label{lem:easycond}
Assume that $\Ec{\xoverline {\cR}^{-\delta}} < \infty$ for some $\delta > 0$ and let $\zeta$ be drawn 
on $\sT$ according to $\mu$ and $\bar \zeta$ be drawn according to $\bar \mu$, 
where these random variables are independent given $\fT$. 
Then, there exists an $\varepsilon > 0$, such that,
$$
\max \Big\{ 
\pc{d(\rho, \zeta) < r}, 
\pc{d(\rho, \bar \zeta) < r}, 
\pc{d(\zeta, \bar \zeta) < r} 
\Big\}
= O(r^\varepsilon), \quad r \downarrow 0.$$
\end{lem}

\noi \textsc{Remark}.
In most classical examples, we have $\mu = \bar \mu$ and invariance by 
rerooting at a random point distributed according to $\mu$. This applies in particular to $\mathfrak T_{\sZ}$ and $\mathfrak T_{\mathbf e}$. The tree $\mathfrak T_{\sH}$ is invariant under rerooting but the time-change is non-trivial. The random variable $\mathbf e(\xi)$ has the Rayleigh distribution, thus $ \pc{\mathbf e(\xi) \leq t} = 
1-e^{-t^2/8} = t^2 / 8 + o(t^2)$ as $t \to 0$. 

\begin{proof} 
Since the arguments are similar: we give all the details for the bound on $d(\rho,\zeta)$ 
and only discuss the relevant modifications for $d(\rho,\bar \zeta)$ and $d(\zeta,\bar\zeta)$. 
Recall the construction discussed at the beginning of this section involving 
the random tree $\Gamma^n$ for $n \geq 1$.

{\em i) The lower bound on $d(\rho,\zeta)$.}
Fix $n \geq 1$ and let $\zeta$ be as in the lemma. We define $\Gamma^*$ as the set of nodes
of $\Gamma^n$ containing all vertices $\vartheta \in \Theta_n$ such that $\cp_n(\sT^\vartheta)$ 
intersects the segment $\llb \rho, \zeta \rrb$. Note that $\Gamma^*$ forms a path in $\Gamma^n$.
By construction, $N = \# \Gamma^*$ is distributed as  the number of individuals 
in the $n$-th generation of a discrete-time branching process with offspring distribution
$\nu = \Law \left( 1 + \#E_J \right).$
Note that $\nu(\{0\}) = 0$ and $\nu(\{1\}) < 1$. For $\vartheta \in  \Gamma^*$, the 
contribution of the intersection of 
$\llb \rho, \zeta \rrb$ with $\cp_n(\sT^\vartheta)$ to $d(\rho, \zeta)$ is distributed as a 
scaled copy of $Y$, the height of a random point (see around \eqref{fix:Y}). More precisely, we have 
\begin{align} \label{idy} 
Y \stackrel{d}{=} \sum_{\vartheta \in \Gamma^*} \cV(\vartheta)^\alpha Y^\vartheta, 
\end{align} 
with a family of independent random variables $\{Y^\vartheta : \vartheta \in \Theta_n\}$ 
distributed as $Y$ 
which is independent of $\{(\cR^\vartheta, \cS^\vartheta) : |\vartheta| < n \}, \Gamma^*$. 
(Note that $\Gamma^*$ and $\{(\cR^\vartheta, \cS^\vartheta): |\vartheta| < n\}$ are typically 
not independent.) Therefore, with $m_n = \min \{\cV(\vartheta) : |\vartheta| = n\}$ and
$c > 0$, we obtain
$$\pc{{Y} \leq r}  
\leq \Prob{\sum_{ \vartheta \in \Gamma^*} e^{-c \alpha n } Y^\vartheta < r} 
+ \Prob{m_n < e^{-c n}}.$$ 
We now consider a crude (stochastic) bound on $N$ which turns out to be sufficient: 
as $\nu(\{0\}) = 0$, we may alternatively think of the branching process as a path from the 
root to a leaf in level $n$ such that all nodes on the path produce additional offspring in 
the next generation according to a copy of $\# E_J$, and all nodes created in this way reproduce 
according to $\nu$. Upon keeping only a single node in the offspring of the latter set of 
particles, we can bound $N$ from below by a random variable with a binomial distribution 
with parameters $n$ and $\nu([2, \infty))$ hereafter denoted by $\bin(n, \nu([2, \infty)))$.
Thus, for $0 < \gamma <  \nu([2, \infty))$, we have
\begin{align*} 
\pc{N < \lceil \gamma n \rceil } 
~\leq~ \p{\bin(n,  \nu([2, \infty))) < \lceil \gamma n \rceil} 
~\leq~ Ce^{- \varepsilon n}, 
\end{align*}
for some $C >0 $ and $\varepsilon > 0$ both depending on $\gamma$. Next, choose $\delta > 0$ such that 
$\Ec{\xoverline {\cR}^{-\delta}} < \infty$. Then, 
\begin{align*}
\pc{m_n < e^{-cn} } 
\leq K^n \Prob{\prod_{i=1}^{n} \xoverline {\mathcal R}_i <  e^{-cn}} 
\leq \exp\left(n\left[\log \left(K \Ec{\xoverline {\cR}^{-\delta}}\right) -c \delta\right]\right).
\end{align*}
With $c$ large enough, the right-hand side is $O(e^{-\delta' n })$ for some $\delta' > 0$. 
Hence, with $n = \lceil C \log (1/r)\rceil $ and independent copies $Y_1, \ldots, Y_n$ of $Y$, we obtain
\begin{align*}
\Prob{Y \leq r} 
& \leq \Prob{\sum_{i=1}^{\lceil \gamma n \rceil } e^{-c \alpha n} Y_i \leq r} 
+ O(e^{-\min(\delta',\varepsilon) n}) \\
& \leq \p{Y\le r e^{c\alpha n}}^{\lceil \gamma n \rceil} + O(e^{-\min(\delta',\varepsilon) n})\\
&  \leq  r^{-C\gamma \log \pc{Y \leq e^{\alpha c} r^{1-c \alpha C}}}  + O(r^{C \min(\delta', \varepsilon)}).
\end{align*}
Choosing $C < (\alpha c)^{-1}$  the exponent in the first term actually tends 
to $+\infty$ as $r\to 0$; hence this yields the assertion for $d(\rho, \zeta)$.

{\em ii) The lower bound on $d(\rho, \bar \zeta)$.}
We proceed similarly and write $\bar \Gamma$ for the set of nodes on the path in $\Gamma^n$ 
containing those $\vartheta \in \Theta_n$ with $\cp_n(\sT^\vartheta)$ intersecting
$\llb \rho, \bar \zeta  \rrb$. 
Let $\vartheta^* = 11\ldots 1$ denote the root of $\Gamma^n$. Upon disregarding 
the contribution of $\cT^{\vartheta^*}$, we have the following inequality which 
is the analogue of \eqref{idy} ($\ge_{\mathbb P}$ denotes stochastic order): 
\begin{align} \label{idbar} 
d(\rho, \bar \zeta)
\geq_{\mathbb P} \sum_{\vartheta \in \bar \Gamma \setminus \{\vartheta^*\}} \cV(\vartheta)^\alpha Y^\vartheta. 
\end{align}
Let $\bar N = \# \bar \Gamma - 1$. Then, $\bar N$ is distributed as the number of particles of type 2 in generation $n$ in a two-type branching process, where 
each particle reproduces independently with the following dynamics: the process starts at time
$0$ with a type 1 particle which remains alive forever. 
We can think of this particle as 
the subtree in the decomposition whose corresponding image contains $\bar \zeta$. 
Let $\bar J$ be a random variable satisfying
$\pc{\bar J=j\,|\, \cR,\cS}= \cR_j$ for $j= 1, \ldots, K$.
In each generation, the immortal particle gives birth to an independent number of type 2 
particles distributed as $\# E_{\bar J}$ corresponding to those subtrees intersecting
$\llb \rho, \bar \zeta  \rrb$ which do not contain $\bar \zeta$ and arise in the decomposition 
of the subtree associated with the immortal particle. 
Finally, since particles corresponding to (images of) subtrees intersecting
$\llb \rho, \bar \zeta\rrb$ which were created on earlier levels are further decomposed, 
every type 2 particle generates type 2 offspring according to $\nu$. (Type 2 particles do 
not generate offspring of type 1.)

Similarly to the bound on $N$ derived above, upon only keeping track of type 2 children of 
the immortal particle, we can bound $\bar N$ from below by a random variable with a 
binomial distribution with parameters $n, \Prob{E_{\bar J} \geq 2}$. 
As $\pc{E_{\bar J} \geq 2} > 0$, for $\gamma > 0$ sufficiently small, we find that
$\pc{\bar N  < \lceil \gamma n \rceil} $ decays exponentially in $n$. 
The claim now follows as above.

{\em iii) The lower bound on $d(\zeta,\bar \zeta)$.} 
Write $\Gamma'$ for the path of nodes in $\Gamma^n$ whose associated subtree (image) intersects 
$\llb \zeta, \bar \zeta  \rrb$. Let $\vartheta_1, \vartheta_2$ be the two (non-necessarily distinct) end-points of $\Gamma'$, and $\vartheta_3$ their highest common ancestor. Similarly to \eqref{idy} and \eqref{idbar}, we have 
\begin{equation}\label{eq:decomp-4types}
d(\zeta, \bar \zeta) 
\geq_{\mathbb P} 
\sum_{\vartheta \in \Gamma' \setminus \{\vartheta_2, \vartheta_3\}} 
\cV(\vartheta)^\alpha Y^\vartheta.
\end{equation}
Describing $N' = \# \Gamma' - 2$ now leads to a branching process with as many types as kinds of portions of the path between $\zeta$ and $\bar \zeta$, which correspond to rescaled copies $d(\zeta, \bar \zeta)$ (type 1), $d(\rho, \zeta)$ (type 2), $d(\rho,\bar \zeta)$ (type 3), and $d(\zeta,\zeta')$ (type 4), where $\zeta'$ is an independent copy of $\zeta$. The initial particle has type $1$; once it reproduces, type 1 never reappears and there is an immortal particle of type $3$; all other particles are of type $2$ except the one corresponding to the portion of the path in $\vartheta_3$ which as type $4$; hence the representation in \eqref{eq:decomp-4types}. Again, the number of children generated by the immortal particle up to time $n$ grows linearly in $n$ except on an event of exponentially small probability. We omit further details.
\end{proof}

The proof of Theorem \ref{thm:main3} requires more details about the tree $\Gamma^n$:
For a node $\vartheta \in \Gamma^n$, let $\cC'(\vartheta)$ denote the (random) set of its children 
in $\Gamma^n$. By construction, if $\vartheta=\vartheta_1\vartheta_2\dots\vartheta_n$, 
then $\cC'(\vartheta)$ only contains nodes of the form
$\vartheta_1\dots\vartheta_\ell\gamma 1\dots 1$ with $0 \leq \ell  < n$ and 
$2\le \gamma \le K$ satisfying
$\varpi_\gamma = \vartheta_{\ell+1}$ where $\vartheta_0 := 1$ (that is, $\gamma$ is a child of
$\vartheta_{\ell+1}$ in $\Gamma$).
By $\cC(\vartheta) \subseteq \cC'(\vartheta)$ we denote the subset of children of $\vartheta$ where, 
for any $1\leq \ell < n$, 
if $\vartheta_1 \ldots \vartheta_{\ell} \gamma 1 \ldots 1 \in \cC'(\vartheta)$ 
for some $\gamma$, we keep only that child with minimal $\gamma$. (We also keep a 
child $\gamma 1 \ldots 1$ with $\gamma \geq 2$ in  $\cC(\vartheta)$ if it exists in 
$\cC'(\vartheta)$.)
From Proposition~\ref{prop:degrees} \emph{iii)}, whose proof given in 
Section~\ref{sec:proof_degrees} does not make use of any results from the current section,
we know that all the trees corresponding to nodes in $\cC(\vartheta)$ are glued on the tree 
corresponding to $\vartheta$ at points that are distinct with probability one. By construction, 
$\cC(\vartheta)$ is a maximal set with this property, and $\#\cC(\vartheta) \le n$. 
Informally, $\# \cC(\vartheta)$ counts the number of distinct ``exit points'' of 
the subtree $\cp_n(\sT^\vartheta)$ in the decomposition of $\mathcal T$, that is, the points distinct from the root $\cp_n(\rho^\vartheta)$ where geodesics may 
leave $\cp_n(\sT^\vartheta)$. See Figure~\ref{fig:phi-squared} for an illustration of 
the construction and of the sets $\cC(\vartheta)$. In particular, in this figure, we have 
  $\cC'(12)= \{13, 21, 41\}$, $\cC(12)= \{13, 21\}$ while $\cC'(24) = \cC(24)=\{31\}$.

\begin{proof}[Proof of Theorem~\ref{thm:main3}]
Fix $n \in \N$ and $\gamma < 1 / \alpha$. We need to show that $\DimH(\fT) \geq \gamma$ 
almost surely. 
Let $\cP = \bigcup_{\vartheta \in \Theta_n}  \cp_n(\{\rho^\vartheta\})$. For $x \in \mathcal T \setminus \mathcal P$ let $\vartheta(x)$ be the unique node in $\Gamma^n$ with
$x \in \cp_n(\sT^{\vartheta(x)})$.
Set  $\mathfrak T(x) = 
\fT^{\vartheta(x)}$. 
Furthermore, let $H_x = d(x, \cp_n(\rho^{\vartheta(x)}))$ be the height of $x$ in (the image of) $\sT(x)$ and
$E_x = d(x, \sT \setminus \cp_n(\sT(x)))$ the distance to exit (the image of) $\sT(x)$ from $x$. 
With $\cC(x) := \cC(\vartheta(x))$, for all $x\in \sT$, we have 
\[
E_x = 
\left\{
\begin{array}{ll}
\min_{\sigma \in \cC(x)} d(x, \cp_n (\sT^\sigma)) & \text{if } \vartheta(x)= 1\dots 1\\
\min_{\sigma \in \cC(x)} d(x, \cp_n (\sT^\sigma)) \wedge H_x &\text{if } \vartheta(x)\ne 1\dots 1\,.
\end{array}
\right.
\]
Recall that $B_r(x) = \{y \in \sT: d(x,y)< r \}$ for $x \in \sT$, $r > 0$.
For any $x \in \sT \setminus \mathcal P$ and $r > 0$, we have either $B_r(x) \subseteq \cp_n(\cT(x))$ or $E_x \le r$. 

We aim at using the mass distribution principle with the measure $\bar \mu = \mu^\emptyset \circ \pi_{\mathcal Z}^{-1}$ constructed thanks to Proposition~\ref{prop:time_change}. As in the previous proof, we let 
$\bar \zeta$ be a random variable drawn on $\mathcal T$ according to $\bar \mu$ (given $\mathfrak T$). 
Formally, we can let $\bar \zeta = \pi_{\mathcal Z}((F^{\emptyset})^{-1}(\xi))$ for a random variable $\xi$ with the 
uniform distribution on $[0,1]$ which is independent of all remaining quantities. 
As $\bar \mu$ has no atoms  ($F^\emptyset$ is continuous and Proposition~\ref{prop:degrees} \emph{iii)}), we obtain
\begin{equation*}
\pc{\bar \mu(B_{r}(\bar \zeta)) > r^\gamma} 
\leq \pc{\bar \mu(\cp_n(\sT(\bar \zeta))) > r^\gamma} + \pc{ E_{\bar \zeta} \leq r}.
\end{equation*}
For any $\vartheta \in \Theta_n$, we denote 
by $\sigma_0(\vartheta), \ldots, \sigma_{n-1}(\vartheta)$ 
the potential elements of $\cC(\vartheta)$ where, seen as words on $[K]$,
$\sigma_\ell(\vartheta)$ and $\vartheta$ have a common prefix of length $\ell$. 
Then, abbreviating $\sigma_\ell := \sigma_\ell(\vartheta(\bar \zeta))$, 
\begin{align} \label{eq:stt}  
\pc{E_{\bar \zeta} \le r}  \le \p{H_{\bar \zeta} \le r} 
+ \p{\bigcup_{\ell=0}^{n-1} \{d(\bar \zeta, \cp_n(\cT^{\sigma_\ell})) \leq r, \sigma_\ell \in \cC(\bar\zeta)\}  }. 
\end{align}

Let $\bar \zeta' = \pi_\Xex((F^{\emptyset})^{-1}(\xi'))$ and $\zeta''=\pi_\Xex(\xi'')$, where $\xi',\xi''$ are independent random variables with uniform 
distribution on $[0,1]$, independent of the remaining quantities.
Then, for $\ell \in \{0,1,\dots, n-1\}$, we have
\begin{align*}
\p{d(\bar \zeta, \cp_n(\cT^{\sigma_\ell})) \leq r, \sigma_\ell \in \cC(\bar \zeta)} 
& = \pc{\cV(\vartheta(\bar \zeta))^\alpha \cdot d(\bar \zeta', \zeta'') \leq r, \sigma_\ell \in \cC(\zeta)} \\
&  \leq \pc{\cV(\vartheta(\bar \zeta))^\alpha \cdot d(\bar \zeta', \zeta'') \leq r}. 
\end{align*}
Similarly, $H_{\bar \zeta}$ is distributed like $\cV(\vartheta(\bar \zeta))^\alpha \cdot d(\rho, \bar \zeta')$, for $\bar \zeta'$ an independent copy of $\bar \zeta$. 
Let $\eta \in (0,1)$ be a parameter to be chosen later. 
Applying the union bound on the right-hand side of \eqref{eq:stt} yields
\begin{align*}
\pc{E_{\bar \zeta} \le r} 
\leq  n \big \{ & \pc{\cV(\vartheta(\bar \zeta)) \leq r^{\eta / \alpha}} 
+ \pc{d(\rho, \bar \zeta) \leq r^{1-\eta}}\\ 
& + \pc{d(\bar \zeta, \zeta'') \leq r^{1-\eta}} \big \}\,.
\end{align*}
As $\bar \mu( \cp_n(\sT(\bar \zeta))) = \cV(\vartheta(\bar \zeta)),$ combining the bounds and 
using Lemma~\ref{lem:easycond}, we see that there exists universal constants $\varepsilon_1 > 0$ and $C > 0$ such that
\begin{equation*}
\pc{\bar \mu(B_{r}(\bar \zeta)) > r^\gamma} 
\leq \Prob{\cV(\vartheta(\bar \zeta)) > r^\gamma} 
+ C n \left\{ \pc{\cV(\vartheta(\bar \zeta)) \leq r^{\eta / \alpha}} + r^{\varepsilon_1 ( 1 - \eta)} \right\}\,.
\end{equation*}

We now choose the parameters. Note that $-\log \cV(\vartheta(\bar \zeta))$  is distributed like the sum of $n$ independent copies of the random variable $-\log \sum_{i=1}^K \I{\bar J=i} \cR_i$; furthermore, as $\cR^* := \sum_{i=1}^K \I{\bar J=i} \cR_i$ is stochastically larger 
than $\xoverline \cR$, the tail bound on $\xoverline \cR$ implies that
$-\log \cR^*$ has exponential moments, and hence that the expected value $q:=\Ec{-\log \cR_{\bar J}}<\infty$ governs the asymptotics. First, let $\eta\in (\gamma \alpha,1)$. Then choose $\delta\in (\gamma/q, \eta / (q\alpha))$. Finally, let $n = n(r) =  \lfloor - \delta \log r \rfloor$.  
Hence, by Cram{\'e}r's theorem for large deviations, there exist 
$C_2, \varepsilon_2 >0$ (depending on the remaining parameters but not on $r$), such that
$\pc{\cV(\vartheta(\bar \zeta)) > r^\gamma} \leq C_2 r^{\varepsilon_2}$ and 
$\pc{\cV(\vartheta(\bar \zeta)) \leq r^{\eta / \alpha}} \leq C_2 r^{\varepsilon_2}$.
Summarizing, there exists $C > 0$ (which may depend on all parameters but not on $r$), 
such that, 
\begin{equation*}
\pc{\bar \mu(B_{r}(\bar \zeta)) > r^\gamma} 
\leq C \log(1/r) \cdot (r^{\varepsilon_2} + r^{\varepsilon_1(1-\eta)} ), \quad 0 < r< 1.
\end{equation*}
It follows that for $r_n = 2^{-n}$, 
$$\sum_{n \geq 1} \pc{\bar \mu (B_{r_n}(\bar \zeta)) > r_n^\gamma} < \infty.$$
Hence, by the Borel--Cantelli lemma, almost surely, 
$$\limsup_{r\to 0} \frac{\bar \mu(B_{r}(\bar \zeta))}{r^\gamma} \leq 2.$$
Thus, denoting  $A =  \{x \in \sT: \limsup_{r\to 0} \bar {\mu}(B_{r}(x))/r^\gamma \leq 2\}$, 
we have $1 = \Prob{\bar \zeta \in A} = \Ec{\bar {\mu} (A)}$ 
implying $\bar {\mu} (A) = 1$ 
almost surely.
From \eqref{massdistributionprinciple}, it follows that, almost surely, 
$\DimH(\sT) \geq \DimH(A) \geq \gamma$ which completes the proof. 
\end{proof}
\color{black}

\subsection{Degrees and properties of the encoding: Proof of Proposition~\ref{prop:degrees} }
\label{sec:proof_degrees}

The proofs of the missing parts of Theorem~\ref{thm:treeuniq}, i.e., \emph{iii)} and  
\emph{iv)}, rely on the dynamics governing the number of points in the set $\cC(\vartheta)$ 
as $\vartheta$ follows a path in $\Theta$ away from the root. 
The following lemma is straightforward from the construction, 
since the exit points counted by $\# \cC(\vartheta)$ are chosen on $\cT^\vartheta$ according 
to the mass measure $\mu^\vartheta$. Recall that, for $\vartheta\in \Theta$, 
we write $\scL(\vartheta)$ for the Lebesgue measure of the set
$\Lambda_{\vartheta}$ (see \eqref{def_ell}). Recall also that $\Gamma^o=\Gamma\setminus \partial \Gamma$, where $\partial \Gamma$ is the set of leaves of $\Gamma$.

\begin{lem}\label{lem:MC_exit}
{i)} Let $\varepsilon_1, \varepsilon_2, \ldots \in [K]$ and, for each $n\ge 1$,
$\vartheta_n = \varepsilon_1 \ldots \varepsilon_n 
\in \Theta_n$. Then, $\scL(\vartheta_{n+1}) = \scL(\vartheta_n)\cdot \cS^{\vartheta_n}_{\varepsilon_{n+1}}$.
The sequence $( \# \cC(\vartheta_n), \scL(\vartheta_n)), n \geq 0$ is a 
Markov chain on $\N \times [0,1]$ starting at $(0,1)$, whose evolution can be 
described as follows: given $(\# \cC(\vartheta_n), \scL(\vartheta_n))$, we have
$$
(\# \cC(\vartheta_{n+1}), \scL(\vartheta_{n+1})) 
= (\I{\varepsilon_{n+1} \in \Gamma^o} + \bin\big(\# \cC(\vartheta_n), \cS^{\vartheta_n}_{\varepsilon_{n+1}}
\big), 
\scL(\vartheta_n)\cdot \cS^{\vartheta_n}_{\varepsilon_{n+1}}).
$$
{ii)} Let $\xi$ be uniformly distributed on $[0,1]$, independent of all remaining quantities 
and, for $n \geq 0$, let $\tilde \vartheta \in \Theta_n$ be the unique node with 
$\xi \in \Lambda_{\tilde \vartheta}$. 
Define $(\tilde \cC_n, \tilde{\scL}_n) := ( \# \cC(\tilde \vartheta_n), \scL(\tilde \vartheta_n))$. 
Then, the sequence $( \tilde \cC_n, \tilde \scL_n), n \geq 0$ is a 
homogeneous Markov chain on $\N \times [0,1]$ starting at $(0,1)$, whose evolution 
can be described as follows: given $( \tilde \cC_n, \tilde \scL_n)$, we have
$$
(\tilde \cC_{n+1}, \tilde \scL_{n+1}) 
= (\I{J_{n+1} \in \Gamma^o} + \bin(\tilde \cC_{n}, \cS^{n+1}_{J_{n+1}}), 
\tilde \scL_n \cdot \cS^{n+1}_{J_{n+1}}),
$$
where $(\cS^n)_{n\ge 0}$ is a family of independent copies of $\cS$ 
and $\pc{J_{n+1} = i \,|\,\cS^{n+1}} = \cS^{n+1}_i$.
\end{lem}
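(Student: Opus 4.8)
The statement is essentially a bookkeeping fact about the almost sure construction of Section~\ref{sec:construction_as}: the plan is to read off the one-step dynamics along a branch of $\Theta$ and check that each update is a function of the current pair and of the fresh vector $\cS^{\vartheta_n}$ attached to the current node. The length coordinate is immediate from \eqref{def_ell}: $\cL(\vartheta_{n+1})=\cL(\vartheta_n)\,\cS^{\vartheta_n}_{\varepsilon_{n+1}}$, and $\cS^{\vartheta_n}$ is, by construction, independent of all $\cR^\vartheta,\cS^\vartheta,\Xi^\vartheta$ with $\vartheta$ not a strict descendant of $\vartheta_n$, hence of everything entering $\cL(\vartheta_0),\dots,\cL(\vartheta_n)$ and $\#M(\vartheta_0),\dots,\#M(\vartheta_n)$. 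For the exit-point coordinate, recall that $\cT^{\vartheta_n}$ is built from $\cT^{\vartheta_n1},\dots,\cT^{\vartheta_nK}$ exactly as in Section~\ref{sec:rec_trees}, the point $\eta^{\vartheta_n}_j$ at which the subtrees rooted at the children of $j$ in $\Gamma$ are attached being sampled on $\cT^{\vartheta_nj}$ according to its mass measure, while $\cT^{\vartheta_nj}$ carries mass $\cS^{\vartheta_n}_j$ in the mass measure of $\cT^{\vartheta_n}$. A point of $\cT^{\vartheta_n\varepsilon_{n+1}}$ other than its root is an exit point of $\cT^{\vartheta_n\varepsilon_{n+1}}$ if and only if it is either $\eta^{\vartheta_n}_{\varepsilon_{n+1}}$ with $\varepsilon_{n+1}\in\Gamma^o$ (whence the term $\I{\varepsilon_{n+1}\in\Gamma^o}$), or one of the $\#M(\vartheta_n)$ non-root exit points of $\cT^{\vartheta_n}$ that falls in $\cT^{\vartheta_n\varepsilon_{n+1}}$; since the finitely many points in play are pairwise distinct with probability one (the mass measures being non-atomic, cf.\ the discussion preceding Figure~\ref{fig:phi-squared}), this gives $\#M(\vartheta_{n+1})=\I{\varepsilon_{n+1}\in\Gamma^o}+\#\{p\in M(\vartheta_n):p\in\cT^{\vartheta_n\varepsilon_{n+1}}\}$.

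The one input that requires an induction on $n$, and the crux of the argument, is that \emph{conditionally on $(\#M(\vartheta_n),\cL(\vartheta_n))$, the positions of the points of $M(\vartheta_n)$ are i.i.d.\ with law the mass measure of $\cT^{\vartheta_n}$ and independent of $\cS^{\vartheta_n}$}. Indeed, each point of $M(\vartheta_n)$ was sampled, at some level $\ell<n$, according to the mass measure of an ancestral subtree (namely $\cT^{\varepsilon_1\cdots\varepsilon_{\ell+1}}$), independently of every other sampling step of the construction and of $\cS^{\vartheta_n}$, which enters only from level $n$ on; membership in $M(\vartheta_n)$ says precisely that the point lies in $\cT^{\vartheta_n}$, and conditioning a mass-measure sample of an ancestor on this event yields exactly the mass measure of $\cT^{\vartheta_n}$, by consistency of the mass measures under the decomposition (point iv) of Section~\ref{sec:rec_trees}, iterated), regardless of which levels the surviving points came from. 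Granting this, given $(\#M(\vartheta_n),\cL(\vartheta_n))$ and $\cS^{\vartheta_n}$, each of the $\#M(\vartheta_n)$ inherited points lies in $\cT^{\vartheta_n\varepsilon_{n+1}}$ independently with probability $\cS^{\vartheta_n}_{\varepsilon_{n+1}}$, so their number is $\bin(\#M(\vartheta_n),\cS^{\vartheta_n}_{\varepsilon_{n+1}})$; combined with the recursions of the first paragraph, and since the only randomness used beyond the current pair is $\cS^{\vartheta_n}$ together with an independent family of uniforms driving the thinning, this exhibits $(\#M(\vartheta_n),\cL(\vartheta_n))_{n\ge0}$ as a Markov chain with the $n$-independent update rule displayed in the statement, started at $(\#M(\emptyset),\cL(\emptyset))=(0,1)$. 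This proves i).

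For ii) the only change is that the branch is chosen randomly: $\tilde\vartheta_{n+1}$ is obtained from $\tilde\vartheta_n$ by descending into the unique child $J_{n+1}$ with $\xi\in\Lambda^{\tilde\vartheta_n}_{J_{n+1}}$. Since $\Lambda^{\tilde\vartheta_n}_j$ has Lebesgue measure $\cS^{\tilde\vartheta_n}_j$ relative to $\Lambda_{\tilde\vartheta_n}$ and $\xi$ is independent of the whole construction, $\pc{J_{n+1}=i\mid\cS^{\tilde\vartheta_n}}=\cS^{\tilde\vartheta_n}_i$, and given $\cS^{\tilde\vartheta_n}$ the index $J_{n+1}$ is independent of the positions of the points of $M(\tilde\vartheta_n)$ (the former depends only on $\xi$, the latter only on earlier sampling steps). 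Substituting $\varepsilon_{n+1}=J_{n+1}$ into the dynamics of i), and observing that $(\cS^{n+1},J_{n+1})_{n\ge0}$ is an i.i.d.\ sequence, yields the asserted homogeneous Markov transition for $(\tilde M_n,\tilde\cL_n)_{n\ge0}$.

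The main obstacle is the italicized claim in the second paragraph: making rigorous, by induction over the levels of $\Theta$, that once only the cardinality $\#M(\vartheta_n)$ is retained the exit points of $\cT^{\vartheta_n}$ are an i.i.d.\ sample from its mass measure, independent of the decomposition data $\cS^{\vartheta_n}$. The cleanest way is to carry along, for each exit point, its ``mass coordinate'' in the excursion parametrization of the relevant subtree, so that tracking a point through the decomposition reduces to comparing an a priori uniform, independent coordinate against the piece boundaries determined by $\cS^{\vartheta_n}$ and $\Xi^{\vartheta_n}$; everything else only unwinds the definitions of $\cL$, of $M(\vartheta)$ and of the sets $\Lambda^\vartheta_\sigma$.
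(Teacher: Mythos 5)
The paper supplies no proof of this lemma: it is preceded only by the one-line remark that it is ``straightforward from the construction, since the exit points counted by $\# M(\vartheta)$ are chosen on $\fT^\vartheta$ according to the mass measure.'' Your write-up correctly elaborates exactly that remark. In particular, you isolate the right crux (the inductive claim that, conditionally on $\#M(\vartheta_n)$, the surviving exit points are i.i.d.\ mass-measure samples of $\cT^{\vartheta_n}$, jointly independent of $\cS^{\vartheta_n}$) and the right mechanism to make it rigorous (track each exit point by the uniform mass coordinate $\xi^{\vartheta_1\cdots\vartheta_\ell}_{\vartheta_{\ell+1}}$ pushed through the piecewise-linear maps $\varphi^\vartheta_\sigma$, so that the thinning at level $n$ is a comparison of fresh uniforms against the partition $\{\Lambda^{\vartheta_n}_j\}$ determined by $(\cS^{\vartheta_n},\Xi^{\vartheta_n})$). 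One small imprecision worth tightening: stated literally, ``the positions are i.i.d.\ with law the mass measure of $\cT^{\vartheta_n}$ and independent of $\cS^{\vartheta_n}$'' cannot hold, since that (random) mass measure itself depends on $\cS^{\vartheta_n}$ through the excursion $\cX^{\vartheta_n}$; the independence that is both true and needed is at the level of the mass coordinates $\nu_\ell$, which are i.i.d.\ uniform and independent of $(\cX^{\vartheta_n},\cS^{\vartheta_n},\Xi^{\vartheta_n})$ jointly. You in fact say exactly this in your final paragraph, so the argument as a whole is sound; I would simply replace the informal italicized claim by the mass-coordinate formulation from the outset. The adaptation to part \emph{ii)} is also correct, noting that $J_{n+1}$ depends on $(\xi,\cS^{\tilde\vartheta_n},\Xi^{\tilde\vartheta_n})$ (not on $\xi$ alone), but is nevertheless conditionally independent of the inherited exit points given $\cS^{\tilde\vartheta_n}$.
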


\begin{proof}[Proof of Proposition~\ref{prop:degrees} i), ii) and iii)]
We start with the proof of \emph{i)}.  
Let $A = \{ s \in [0,1] : \Xex(s) = 0\}$ denote the zero-set of $\Xex$. 
For $n \geq 1$, let $\Lambda^n = \Lambda_{\vartheta_n}$ where $\vartheta_n = 1\ldots 1$. Then for every 
$n\ge 0$, $\cp_n(\cT^{\vartheta_n})$ is the subtree that contains 
the root $\rho$ of $\fT$. Furthermore, $\Lambda^n$ is the union of $\# \cC (\vartheta_n) + 1$ 
 disjoint intervals. Clearly, $(\Lambda^n)_{n \geq 1}$ is decreasing and we set 
$\Lambda := \bigcap_{n \geq 1} \Lambda^n$. 
Since $\Ec{\cS_1} < 1$, Lemma~\ref{lem:MC_exit} {\em i)} and a routine drift argument (see, e.g., 
Chapter~8 of \cite{MeTw1993}) shows that, almost surely,  $\# \cC (\vartheta_n) = 1$ infinitely 
often, and thus $\Lambda^n$ consists of only two intervals for infinitely many $n$. 
As $\{0,1\}\in A$, for any $n \geq 1$ with this property, we have
\[\Lambda^n 
\subseteq [0, \inf \{t > 0 : t \notin \Lambda^n\}] \cup [\sup \{t < 1 : t \notin \Lambda^n \}, 1]\,.\]
Since $\scL(\vartheta_n)=\Leb(\Lambda^n)\to 0$ with probability one, it follows that 
$\Lambda = \{0,1\}$ almost surely. 

Now, for any $s\not\in \Lambda$, there is some $n$ large enough for which $s\not \in \Lambda^n$ and $\Lambda^n$ consists of two intervals. Then, since the path between $\rho$ and the projection of $s$ in the tree must cross $\varphi_n^\circ(\cT^{\vartheta_n})$, $\Xex(s)$ is at least a rescaled copy of $\Xex(\xi)$, for a uniform random variable $\xi$ which is independent of $\Xex$. However, in the context of fixed point equation \eqref{fix:Y}, we have already seen that $\Xex(\xi) > 0$ almost surely (see \eqref{01law}). It follows that $A \subseteq \Lambda$, and since $\Lambda=\{0,1\}$ the root $\rho$ of $\fT$ is a leaf.

{\em ii)} 
Since $\mu_{\Xex}$ has full support, this reduces to showing that for a random point $\zeta$ 
sampled from $\mu_\Xex$ and any $\varepsilon>0$, with probability one, there exists 
some $x\in \cT = \cT_{\Xex}$ such that $\zeta$ lies in the subtree of $\cT$ 
rooted at $x$, namely $\cT^{\uparrow}(x) := \{u\in \cT: x\in \llb \rho, u \rrb\}$,
and $\mu_{\Xex}(\cT^\uparrow(x))<\varepsilon$. 
To prove this, we choose $\zeta = \pi_{\Xex}(\xi)$ with $\xi$ as in Lemma~\ref{lem:MC_exit} {\em ii)}, and follow $\zeta$ in the refining decomposition of the tree 
according to $\Theta_n$, as $n$ increases. 
For $n\ge 1$, let $\tilde \vartheta_n\in \Theta_n$ be as in Lemma~\ref{lem:MC_exit} {\em ii)}. 
In particular, $\zeta \in \cp_n(\cT_{\Xex^{\tilde \vartheta_n}})$.
Thus, $\zeta$ lies in the subtree of $\cT$ rooted at $\cp_n(\rho^{\tilde \vartheta_n})$, and thus it suffices to show that for any $\varepsilon>0$, 
\[ A_\varepsilon=\inf\{n\ge 0: \mu_{\Xex}(\cp_n(\cT_{\tilde \vartheta_n})) < \varepsilon \}<\infty\,.\]
Observe that, if $\tilde {\scL}_n = \mu_{\Xex}(\cp_n(\cT_{\tilde \vartheta_n}))<\varepsilon$ 
and $\tilde \cC_n =0$, then $A_\varepsilon\le n$. 
As in the proof of {\em i)} above, since $\Ec{\cS^{n}_{J_{n}}}<1$, a classical drift 
argument shows that $(\tilde \cC_n)_{n\ge 0}$ is positive recurrent, and in particular,
$\tilde \cC_n=0$ infinitely often. Then, for some subsequence $(n_i)_{i\ge 1}$ 
with $n_i\to \infty$, we have $\tilde \cC_{n_i}=0$. But, by Lemma~\ref{lem:MC_exit}, 
$\tilde \scL_{n}\to 0$ almost surely, so that 
there is an $i_0$ for which $\tilde \scL_{n_i}<\varepsilon$ for all $i\ge i_0$. One then has 
$A_\varepsilon\le n_{i_0} <\infty$, which completes the proof.

Finally, we consider  \emph{iii)}. Since $d(\rho, \zeta) > 0$ almost surely, no mass can add up at exit points in the construction of the excursion $\Xex$ in \eqref{fix:X}. Hence, 
$$\e \bigg[\sup_{t \in [0,1]} \mu_\Xex ( \{\pi_\Xex(t)\})\bigg] 
\leq \E{\max (\mathcal S_1, \ldots, \mathcal S_K)} \cdot \e \bigg[\sup_{t \in [0,1]} \mu_\Xex ( \{\pi_\Xex(t)\})\bigg].$$ 
It follows that the left-hand side is zero which concludes the proof.
\end{proof}

\begin{proof}[Proof of Theorem~\ref{thm:treeuniq}  iii) and  iv)]
The point {\em iii)} was established in the proof of Proposition~\ref{prop:degrees} {\em i)}.
The argument we have used above for the proof of Proposition~\ref{prop:degrees} {\em ii)} also 
implies {\em iv)}, that is, almost surely, $\Xex$ is nowhere monotonic: indeed, if this  were not the case, 
then, with positive probability, a randomly chosen point would be contained in an 
interval where $\Xex$ is monotonic.
In particular, for some $\varepsilon > 0$ and a uniformly chosen point $\xi$, $\Xex(\xi)$ would fall in a interval of length at least $\varepsilon$ on which $\Xex$ is monotonic with positive probability.
Recall the quantites $\tilde \vartheta_n$ and $\tilde \cC_n$ from the proof  of Proposition~\ref{prop:degrees} {\em ii)}. 
Let $I\subseteq \N$ be the (a.s.\ infinite) set of indices $n$ for which $\tilde \cC_n=0$. 
For $n\in I$, the set $\Lambda_{\tilde \vartheta_n}\subseteq [0,1]$ consists of a 
single half-open interval, and $\xi\in \Lambda_{\tilde \vartheta_{n}}$. 
For every $n\in I$ large enough, we have $\scL(\tilde \vartheta_n) < \varepsilon/2$. 
But $\Xex$ is non-monotonic 
on $\Lambda_{\tilde \vartheta_{n}}$ since $\xi\in \Lambda_{\tilde \vartheta_n}$ 
and $\Xex(\inf \Lambda_{\tilde \vartheta_{n}})=\Xex(\sup \Lambda_{\tilde \vartheta_n})< \Xex(\xi)$ 
almost surely by Theorem~\ref{thm:treeuniq} {\em iii)}. This concludes the proof.
\end{proof}

\begin{proof}[Proof of Proposition~\ref{prop:degrees} iv)]
 Let $\fT = \fT_\Xex$. 
First note that $2 \in \sD(\cT)$ almost surely since $\mu$ has no atoms, 
the set of branch points of $\cT$ is at most countable, and $\cT$ is not reduced to a point. 
Further, since $\Xex$ is nowhere monotonic, there exist local minima, and hence branch points. 
It follows that the maximum degree of $\cT$ is at least $3$ (and, a priori, possibly infinite). 

Next, for every $k \in \sD(\Gamma), k \geq 3$, almost surely, there exists a point $x \in \sT$ with degree $k$. This follows immediately from the fixed point equation satisfied by $\fT$ and the fact that the root  has degree $1$ and $\mu$ is concentrated on the leaves (see parts \emph{i)} and \emph{ii)} of the proposition). For example, 
if $a \in \Gamma$ has degree $k$, then (the image of) the root of $\sT_{a+1}$ is a point in $\sT$ with degree $k$.
The main part of the proof consists in showing that, if $k+1 \notin \sD(\Gamma), k \geq 3$, then there do not exist points in $\sT$ with degree $k+1$. To this end, 
 let $(U_i)_{i\ge 1}$ be a family of independent r.v.\ 
with the uniform distribution on $[0,1]$ and $\zeta_i = \pi_{\Xex}(U_i)$ be the 
corresponding test points on $\sT$. Further, let $U_0 = 0$
and $\zeta_0 = \rho$.
Since $\mu$ has full support on $\cT$, if there 
exists $x\in \cT$ with degree $k+1$, then all connected components of $\cT \setminus \{x\}$ have 
positive mass. In particular, with positive probability, any two of the segments $\llb \zeta_i,\zeta_j\rrb$, 
$0\le i<j\le k$, intersect at a point with degree $k+1$.  Let $\cI$ be the collection of points in $\cT$ that are 
contained in some intersection $\llb \zeta_i,\zeta_j\rrb\cap \llb \zeta_{i'},\zeta_{j'}\rrb$, 
for distinct elements $i,i,j,j' \in \{0,\dots, k\}$. 
Let $T \geq 1$ be the minimal integer such that, for some pairwise distinct nodes $\sigma_i, i=0, \ldots, k$ in $\Theta_T$, we have $U_i \in \Lambda_{\sigma_i}, i = 0, \ldots, k$. Clearly, $T < \infty$ almost surely as $\max_{\sigma \in \Theta_n} \scL(\sigma) \to 0$ almost surely. Let us denote the most recent common ancestor of the nodes $\sigma_i, i=1, \ldots, k$ by $\sigma^*$. (That is, $\sigma^*$ is the node of maximal depth such that all $\sigma_i, i = 1, \ldots, K$ are contained in its subtree.) Write $\sigma^* = \bar \sigma^* \varepsilon$ with $\bar \sigma^* \in \Theta_{T-1}$ and $\varepsilon \in [K]$. $\cI$ is a singleton if and only if the nodes $\sigma_i, i=1, \ldots, k$ lie in subtrees of pairwise distinct nodes of the form $\bar \sigma^* \gamma$, where $\varpi_\gamma = \varepsilon$. In particular, 
$x$ is equal to the roots of trees $\sT_\sigma, \sigma \in \{\bar \sigma^* \gamma : \varpi_\gamma = \varepsilon \}$ under the canonical surjection $\sqcup_{\sigma \in \Theta_T} \sT_{\sigma} \to \sT$. As roots have degree 1 and the mass measure is concentrated on the leaves, the degree of $x$ must be equal to the degree of $\varepsilon$ in $\Gamma$. In particular,  the degree of $x$ lies in the set  $\sD(\Gamma)$. Summarizing, almost surely, the set $\cI$ does not exist of a singleton with degree $k+1 \notin \sD(\Gamma)$ which concludes the argument. 
$\sup\sD(\cT)<\infty$ almost surely can be deduced easily. For, if this was not the case, for any natural number $L \geq 3$, the above construction with $k=L-1$ would show that, with positive probability, there exists $L' \geq L$ such that $\cI$ is reduced to a singleton of degree $L' \in \sD(\Gamma)$. Choosing $L$ strictly larger than the maximal degree in $\Gamma$ contradicts this fact. It remains to prove that, if
$\max \sD(\Gamma) \geq 4$ and $3 \notin \sD(\Gamma)$, then, almost surely, there are no nodes of degree 3 in $\sT$. Set
$\cI = \llb \zeta_1,\zeta_2 \rrb\cap \llb \zeta_{1},\zeta_{3}\rrb \cap \llb \zeta_{2},\zeta_{3}\rrb$. Almost surely, $\cI$ is a singleton, say $x$. There exists a node with degree $3$ in $\sT$ with positive probability if and only if, with positive probability, $x$ has degree 3. Let $\vartheta_1, \vartheta_2 \in [K]$ such that $U_i \in \Lambda_{\vartheta_i}, i =1, 2$. Let $\sigma^*$ be their most recent common ancestor. If $U_1, U_2 \in \Lambda_{\sigma^*}$, then let $U_i'  = \varphi_{\sigma^*}(U_i)$, where $\varphi_{\sigma^*}$ is defined in \eqref{eq:def_sets}. If $U_1 \in \Lambda_{\sigma^*}$ and
$U_2 \notin \Lambda_{\sigma^*}$, then set $U_1' = \varphi_{\sigma^*}(U_1)$ and $U_2' = \xi_{\sigma^*}$. Proceed analogously if the roles of $U_1, U_2$ are interchanged. Note that, given that one of these cases occurs, the random variables $U_1', U_2'$ are independent, uniformly distributed on $[0,1]$ and independent of $\{(\mathcal R^\vartheta, \mathcal S^\vartheta, \Xi^\vartheta) :  |\vartheta|>  0\}$. Let $\zeta_i' = \pi_{\Xex^{\sigma^*}}(U_i'), i=1,2,3$, where $U_3' = 0$. Note that, $x$ has degree $3$ if and only if the node $ \llb \zeta_1',\zeta_2' \rrb\cap \llb \zeta'_{1},\zeta'_{3}\rrb \cap \llb \zeta'_{2},\zeta'_{3}\rrb$ in $\sT_{\sigma^*}$ has degree $3$. By the same arguments as above, if $U_1, U_2 \notin \Lambda_{\sigma^*}$, then the degree of $x$ is equal to the degree of $\sigma^*$ in $\Gamma$ and therefore at least 4. It follows that
\begin{align*} \Prob{\text{deg}(x) = 3} & = \Prob{\text{deg}(x) = 3} (1 - \Prob{U_1, U_2 \notin \Lambda_{\sigma^*}}) \end{align*}
As $\Prob{U_1, U_2 \notin \Lambda_{\sigma^*}} > 0$ it follows that $\Prob{\text{deg}(x) = 3} = 0$ which concludes the proof.
\end{proof}

\subsection{Optimal H\"older exponents} \label{sec:proof_hoelder}

The proof of Corollary~\ref{cor:opt_Hol} merely consists in putting together the information we have gathered in previous sections.

\begin{proof}[Proof of Corollary~\ref{cor:opt_Hol}] {\em (a)} Note that under the conditions 
of Theorem~\ref{thm:main2}, $\Ec{\xoverline{\cR}^{-\delta}}<\infty$ for 
any $\delta\in (0,\gamma)$. Thus the conclusion of Theorem~\ref{thm:main3} holds. 
On the one hand, we have $\DimuM(\cT_{\Xex})\ge \DimH(\cT_\Xex)\ge \alpha^{-1}$ because of 
Theorem~\ref{thm:main3}; on the other hand, $\DimoM(\cT_{\Xex})\le \alpha^{-1}$. 

{\em (b)} By Theorem~\ref{thm:treeuniq}, $\Xex$ is nowhere constant, hence 
\eqref{aiz} holds and $\alpha=\omega_\Xex$. By definition of $\omega_\Xex$, 
for any $\gamma<\omega_\Xex=\alpha$, almost surely, there exists a process $\tilde \Xex$ 
time-change equivalent to $\Xex$ with $\gamma$-H\"older continuous paths.

{\em (c)} This follows immediately from part {\em (a)} since the existence of a process $\Xex'$ whose sample paths 
are $\gamma$-H\"older continuous with positive probability for $\gamma>\alpha$ would contradict the statement there.
\end{proof}

\section{Applications} \label{sec:app}

In the first three sections, we discuss the corollaries stated in Section~\ref{sec:taste_app} 
concerning the processes $\mathbf e$, $\sZ$, and $\sH$. Then, in the fourth section, we study a new 
application.

\subsection{The Brownian continuum random tree} \label{sec:e}

The Brownian CRT $\sT_{\mathbf e}$ encoded by a Brownian excursion is a fundamental 
tree arising as scaling limit for various classes of random trees. We quote the classical 
case of uniform random labelled trees \cite[Theorem 2]{Aldous1991b}, but also binary unordered 
unlabeled trees (Otter trees) \cite{MaMi2011a}, random trees with a prescribed degree 
sequence \cite{BrMa2014a}, general unordered unlabeled trees (P\'olya trees) \cite{PaSt2015a}, 
unlabeled unrooted trees \cite{Stufler2014a}, and random graphs from subcritical classes 
\cite{PaStWe2014a} to name a few examples. (See also \cite{Legall2005}.)

\begin{thm} [Aldous \cite{Aldous1991b}, see also \cite{Legall2005}]
Let $T_n$ be the family tree of a critical branching process 
with offspring mean one and finite 
offspring variance $\sigma^2$ conditioned on having $n$ vertices. Let $d_n$ denote the graph distance 
on $T_n$ and $\mu_n$ the uniform probability measure on the leaves. Then, as $n \to \infty$, in 
distribution with respect to the Gromov--Hausdorff--Prokhorov distance, 
$$(T_n, \tfrac\sigma 2 \cdot n^{-1/2} d_n, \mu_n, \rho_n) \to (\sT_{\mathbf e}, d_{\mathbf e}, \mu_{\mathbf e}, \rho_{\mathbf e}).$$
\end{thm}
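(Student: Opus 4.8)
The plan is to follow the classical route of Aldous: encode $T_n$ by a one-dimensional height function, prove a functional limit theorem for that function towards the normalised Brownian excursion $\mathbf{e}$, and then lift the convergence to the metric--measure level using the Lipschitz continuity of $f \mapsto \mathfrak T_f$ recalled in Section~\ref{sec:settings}. First I would fix a uniformly random plane ordering of the children of each vertex of $T_n$, turning $T_n$ into a random rooted plane tree, and let $C^n:[0,2(n-1)]\to\mathbb{R}_{\ge 0}$ be its contour (Harris) function, rescaled to $\widehat C^n(t):=\tfrac{\sigma}{2\sqrt n}\,C^n(2(n-1)t)$ for $t\in[0,1]$. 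The central goal is then $\widehat C^n\to\mathbf{e}$ in distribution in $(\Cex,\|\cdot\|)$.

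I would establish this invariance principle in the usual three substeps. (i) Pass to the \emph{\L ukasiewicz path} $W^n$ of $T_n$, which is a centred integer-valued random walk with step-variance $\sigma^2$, conditioned to first hit $-1$ at time $n$; the cycle lemma combined with Vervaat's transformation of the associated random-walk bridge (or, alternatively, a local limit theorem for the walk) yields $n^{-1/2}W^n(\lfloor n\cdot\rfloor)\to\sigma\,\mathbf{e}$. (ii) Recover the height process from the \L ukasiewicz path through $H^n(k)=\#\{\,j<k: W^n(j)=\min_{j\le i\le k}W^n(i)\,\}$ and deduce, in the finite-variance regime, $n^{-1/2}H^n(\lfloor n\cdot\rfloor)\to\tfrac{2}{\sigma}\mathbf{e}$. (iii) Bound the uniform distance between $C^n$ and a suitable time-reparametrisation of $H^n$ by $1+o(\sqrt n)$, which gives $\widehat C^n\to\mathbf{e}$. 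All of this is carried out in \cite{Aldous1991b, Legall2005} and, for general offspring laws, in \cite{duq1}.

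Next I would transfer to measured real trees. The real tree $\mathcal T_{C^n}$ encoded by $C^n$ is the geometric realisation of $T_n$ (each edge replaced by a unit segment), so $(V(T_n),d_n)$ embeds in $\mathcal T_{C^n}$ with $\dgh\le 1$; since rescaling the excursion by a factor $a$ rescales the encoded metric by $a$, the space represented by $\mathfrak T_{\widehat C^n}$ is at $\dgh$-distance $O(n^{-1/2})$ from $(T_n,\tfrac{\sigma}{2\sqrt n}d_n)$. Because $f\mapsto\mathfrak T_f$ is globally Lipschitz from $(\Cex,\|\cdot\|)$ to $(\bbT^{\GHP}_\tf,\dghp)$ by \cite[Proposition 3.3]{abrdelhos}, the continuous mapping theorem applied to $\widehat C^n\to\mathbf{e}$ gives $\mathfrak T_{\widehat C^n}\to\mathfrak T_{\mathbf{e}}=(\sT_{\mathbf{e}},d_{\mathbf{e}},\mu_{\mathbf{e}},\rho_{\mathbf{e}})$ in $\dghp$, and the $O(n^{-1/2})$ correction is negligible. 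For the measure part I would show that $\mu_n$ (uniform on the leaves of $T_n$), transported along this embedding, is close in the Prokhorov distance to $\mu_{\widehat C^n}$: one has $L_n/n\to p_0:=\pc{\text{offspring}=0}>0$ in probability, and the rescaled contour-exploration times of the leaves equidistribute on $[0,1]$; phrasing this as the joint convergence $(\widehat C^n,\,\text{empirical measure of rescaled leaf times})\to(\mathbf{e},\Leb)$ and using that $\mathbf{e}\in\Cex^*$ a.s.\ with $\mu_{\mathbf{e}}(\mathscr{L}_{\mathbf e})=1$ (so that leaves, all vertices, and Lebesgue-on-the-contour all push forward under $\pi_{\mathbf e}$ to the same measure $\mu_{\mathbf{e}}$) completes the argument.

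The main obstacle is the conditioned invariance principle in substep (i): conditioning the \L ukasiewicz walk to hit $-1$ exactly at time $n$ is an event of probability $\Theta(n^{-3/2})$, so Donsker's theorem cannot be invoked directly and one must either exploit the exact combinatorial bijection underlying the cycle lemma together with Vervaat's rearrangement, or feed in a sharp local limit theorem; keeping track, uniformly in $n$, of the sup-norm comparisons between $W^n$, $H^n$ and $C^n$ is the accompanying technical point. By contrast, the passage through the Lipschitz map $f\mapsto\mathfrak T_f$ and the measure step are routine once the excursion convergence is in hand.
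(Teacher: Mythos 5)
The paper itself does not prove this theorem — it is stated as a cited classical result (Aldous, Le~Gall) and used as background. Your outline is precisely the standard modern proof from those references (Le Gall's 2005 survey, and Duquesne's thesis for general finite-variance offspring): \L ukasiewicz path and Vervaat transform (or a local limit theorem) for the conditioned invariance principle, passage to the height process, sup-norm comparison of contour and height, and then the Lipschitz coding map $f\mapsto\mathfrak T_f$ for the metric part — so there is nothing in the paper to compare against, and the strategy is correct.

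The one place that deserves a clarifying word is the measure step, which you handle correctly but describe a bit loosely. It is not quite right to say that the uniform-on-leaves measure is ``close in Prokhorov distance to $\mu_{\widehat C^n}$'' as a deterministic statement for each $n$; rather, the argument is exactly the joint-convergence one you then invoke: the empirical measure of rescaled leaf-times converges (jointly with $\widehat C^n$) to $(\mathbf e,\Leb)$, the contour functions $\widehat C^n$ are tight in $\Cex$ and hence whp uniformly equicontinuous, so the pushforwards under $\pi_{\widehat C^n}$ of the leaf-time empirical and of Lebesgue have vanishing Prokhorov distance, and continuity of the map $(f,\nu)\mapsto(\cT_f,d_f,\nu\circ\pi_f^{-1},\rho_f)$ finishes. (One also uses, as you note, $\mu_{\mathbf e}(\mathscr L_{\mathbf e})=1$ to identify the limit.) Your sketch already contains all these ingredients; I only flag the phrasing because the ``Prokhorov-close for each $n$'' formulation requires the equicontinuity input rather than being immediate, and this is indeed the delicate point one must keep track of when making the proof rigorous.
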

Corollary~\ref{coro1} {\em (a)} and Corollary~\ref{cor:uni} {\em (a)} follow immediately from  \eqref{eqe} and Theorem \ref{thm:treeuniq}. Similarly, 
Corollary~\ref{cor:main} {\em (a)} follows from Theorems~\ref{thm:main2} and \ref{thm:main3} noting that $\xoverline {\cR}$ has the $\text{Beta}(1/2, 1)$ 
distribution with density $\tfrac 1 2 t^{-1/2} \mathbf 1_{[0,1]}(t)$.

\subsection{Random self-similar recursive triangulations of the disk} \label{sec:Z}

\begin{figure}[t]
    \centering
    \begin{minipage}{.35\linewidth}
    \begin{tikzpicture}
        \pgftransformscale{.68}
        \def \r {3}
        \def \s {1pt}
        \def \st {2pt}
        \draw (0,0) circle (\r);
        \draw[fill, black] (0:\r) circle (\s) node[right] {$1$};
        \draw[fill, black] (30:\r) circle (\s) -- (180:\r) circle (\s);
        \draw[fill, black] (10:\r) circle (\s) -- (310:\r) circle (\s);
        \draw[fill, black] (20:\r) circle (\s) -- (270:\r) circle (\s);
        \draw[fill, black] (190:\r) circle (\s) -- (260:\r) circle (\s);
        \draw[fill, black] (40:\r) circle (\s) -- (85:\r) circle (\s);
        \draw[fill, black] (90:\r) circle (\s) -- (175:\r) circle (\s);
        
        \tikzstyle{cblue}=[circle, draw, thick, fill=blue!60, scale=0.5]
        
        \foreach \place/\x in {{(90:\r*.5)/1}, {(250:\r*.2)/2}, {(-35:\r*.75)/3}, {(-135:\r*.9)/4}, {(62:\r*.97)/5}, {(135:\r*.9)/6}, {(-20:\r*.95)/7}}
        \node[cblue] (a\x) at \place {};
        
        \path[very thick] (a1) edge (a6);
        \path[very thick] (a1) edge (a5);
        \path[very thick] (a1) edge (a2);
        \path[very thick] (a2) edge (a4);
        \path[very thick] (a2) edge (a3);
        \path[very thick] (a3) edge (a7);
    \end{tikzpicture}
\end{minipage}
\hfil
\begin{minipage}{.35\linewidth}
    \includegraphics[scale=.25]{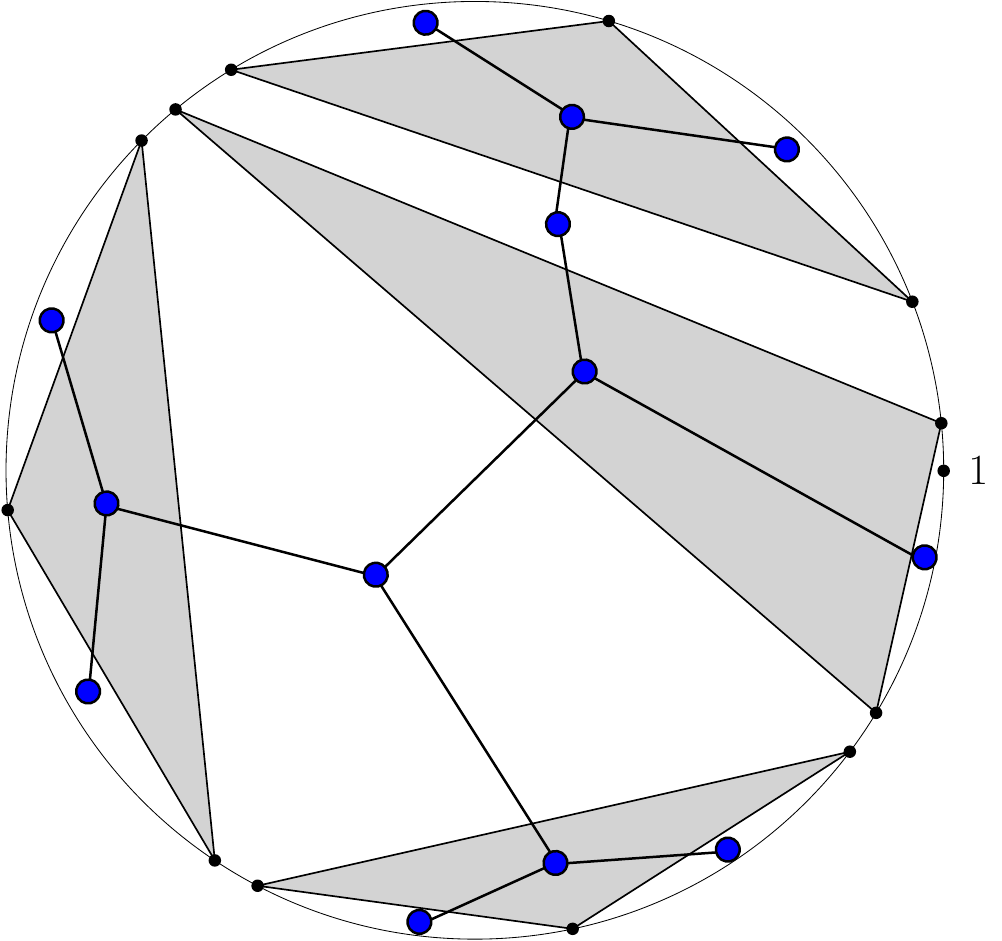}
    \end{minipage}
    \caption{On the left, a lamination and the corresponding rooted dual tree. Distances in the tree correspond to the number of chords separating the fragments in the lamination. 
    On the right, an example of $3$-angulation, together with its dual tree. The tree is rooted at 
    the node containing the point $(1,0) \in \partial \sD$. The shaded portions 
    correspond to the triangles inserted, while the white portions are essential fragments, i.e. the regions of the disk 
    with a positive Lebesgue measure on the circle.}
    \label{fig:lam-dual}\label{fig:k-angulation}
\end{figure}

The processes $\sZ$ and $\sH$
arise in the problem of random recursive decompositions of the 
disk by non-crossing chords \cite{legalcu,BrSu2013a}. They encode the trees that are the 
planar dual of the limit triangulation in the same sense that the Brownian CRT is the dual of the limit uniform triangulation of the disk studied by Aldous 
\cite{al94circle,al94}. We now proceed to the precise definitions.  

The unit disk $\sD:=\{x\in \R^2: \|x\|\le 1\}$ is decomposed at discrete time steps as follows: 
At time $n=1$, a chord is inserted connecting two uniformly chosen points on the boundary $\partial\sD$. 
Then, given the configuration at time $n$, at time $n+1$ : pick two independent points on the circle 
$\partial\sD$ uniformly at random; add the chord connecting them if it does not intersect any previously 
inserted chord, otherwise reject the points and continue. This procedure yields an increasing 
sequence of non-intersecting chords $(L_n)_{n\ge 1}$, also called a lamination. 
For each $n\ge 1$, $\sD\setminus L_n$ 
consists of a finite number of connected components, and by $T_n$, we denote the discrete tree which 
is planar dual to the decomposition (nodes correspond to connected components, and two nodes 
are adjacent if the corresponding connected components share a chord). The tree $T_n$ is 
rooted at the node corresponding to a fragment containing a fixed point on the circle, say $(1,0)$. 
(See Figure~\ref{fig:lam-dual}). It has been proved in \cite{BrSu2013a} that $T_n$ suitably 
rescaled converges almost surely towards a limit tree encoded by a certain random process
which satisfies a fixed point equation of type \eqref{fix:X}. More precisely, for 
$\beta:=(\sqrt {17}-3)/2$, with respect to the Gromov--Hausdorff distance and as $n \to \infty$, we have 
\begin{equation}\label{eq:conv_TZ}
(T_n, n^{-\beta/2} d_n) \to (\sT_\sZ, d_\sZ),
\end{equation}
for the unique random excursion $\sZ$ satisfying \eqref{eqZ} with $\E{\sZ(\xi)} = \kappa > 0$, where $\kappa$ denotes a scaling constant whose value is irrelevant 
in the present context. (It is given in Theorem 3 in \cite{BrSu2013a}.)

Corollary~\ref{coro1} {\em (b)}, Corollary~\ref{cor:uni} {\em (b)} and Corollary~\ref{cor:main} {\em (b)} follow from  \eqref{eqZ}, Theorems \ref{thm:treeuniq}, \ref{thm:main2} and \ref{thm:main3} since $\xoverline \cR$ has the uniform distribution on $[0,1]$.

For any $\gamma < \beta$, by  Corollary \ref{cor:opt_Hol}, there exists a process equivalent 
to $\sZ$ with $\gamma$-H{\"o}lder continuous paths. Moreover, for $\gamma > \beta$, no equivalent 
process can be $\gamma$-H{\"o}lder continuous with positive probability. Indeed, by Theorem~1.1 
in \cite{legalcu}, the process $\sZ$ itself has $\gamma$-H{\"o}lder continuous paths for any 
$\gamma < \beta$ and is therefore optimal with respect to regularity. $\sZ$ 
is a good encoding of the real tree $\cT_\sZ$ since its fractal dimension corresponds precisely to what  
should be expected from the regularity of $\sZ$. (The fact that the rescaling of $T_n$ is $n^{-\beta/2}$ 
in \eqref{eq:conv_TZ} rather than $n^{-\beta}$ is reminiscent of the number of chords in $L_n$, which is only of order $\sqrt n$, 
so $T_n$ has only order $\sqrt{n}$ nodes, see \cite{legalcu,BrSu2013a}.)

\subsection{Homogeneous recursive triangulation of the disk} \label{sec:H}

\begin{figure}
\centering
\includegraphics[scale=.40]{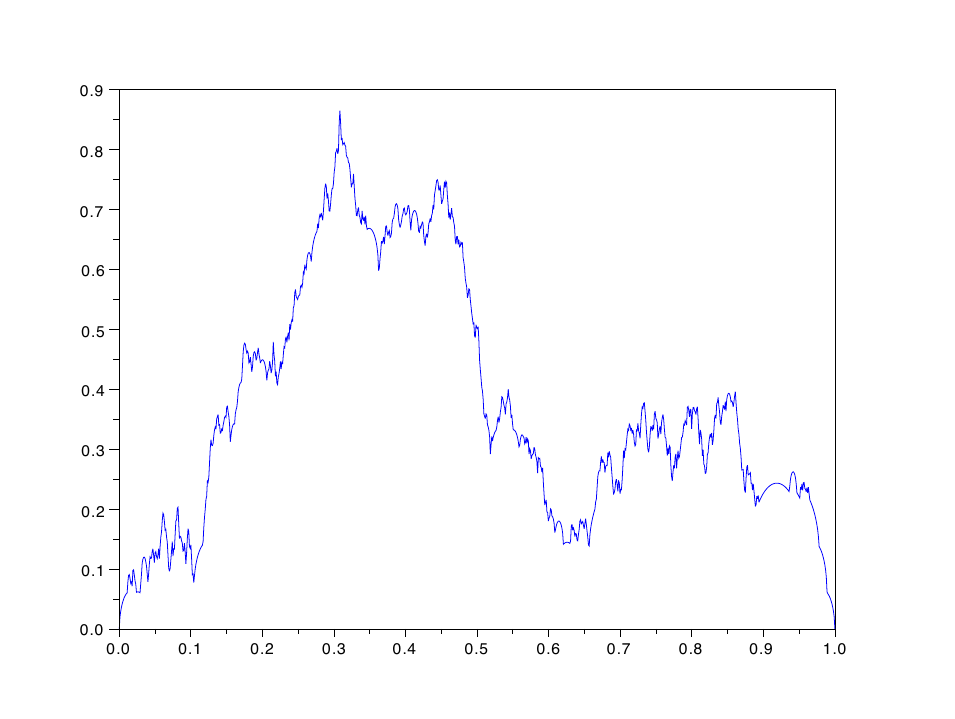}    
\hfil            
\includegraphics[scale=.40]{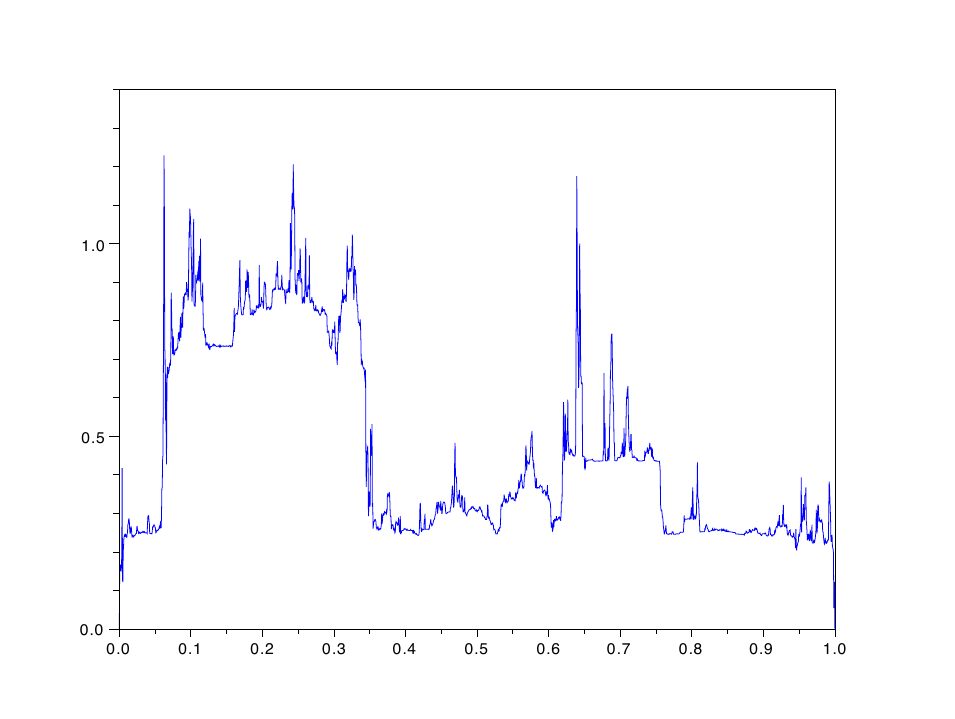}
\caption{\label{fig:Z} A comparison of the processes $\sZ$ and $\sH$. 
On the left, the process $\sZ$ encoding the dual tree of the self-similar 
recursive triangulation of the disk. 
On the right, the process $\sH$ encoding the dual tree of the homogeneous recursive 
triangulation of the disk. (The scales are not given since they are irrelevant.)} 
\end{figure}

We have a completely different situation if we consider a partition of the disk 
$\sD$ using random chords, but this time, the chords are 
inserted using a different strategy that is homogeneous: in each step, given the current configuration, 
one connected component is chosen \emph{uniformly at random} and split by the insertion 
of a chord linking two uniformly random points on the boundary conditioned on splitting the chosen 
component. Now, there is no rejection, and at time $n$ we have a collection of chords $L^h_n$ consisting
of $n$ elements. As before, we can define a tree that is dual to the lamination, and we denote it 
by $T_n^h$ (the discrete tree $T_n^h$ has $n+1$ nodes). It has been proved in \cite{BrSu2013a} that 
a suitably rescaled version of $T_n^h$ converges: in distribution with respect to the 
Gromov--Hausdorff distance and as $n \to \infty$, we have 
\begin{equation}\label{eq:conv_TH}
(T_n^h, n^{-1/3} d_n) \to (\sT_{\sH}, d_{\sH})\,,
\end{equation}
where $\sH$ is the unique random excursion satisfying \eqref{eqH} and $\Ec{\sH(\xi)}= 1/ \Gamma(4/3)$. 
(No characterization of $\sH$ was given in \cite{BrSu2013a}; one is
given in Corollary~\ref{cor:uni} {\em (c)}.) 
The rescaling $n^{-1/3}$ in \eqref{eq:conv_TH} suggests that the 
limit tree $\cT_{\sH}$ should have fractal dimension $3$. However, a first natural grasp that one has 
on the tree $\cT_{\sH}$ is the encoding excursion $\sH$, but a quick look at Figure~\ref{fig:Z} 
suggests that some trouble is around the corner since $\sH$ does not look 
H\"older with exponent $1/3-\varepsilon$ for $\varepsilon>0$ arbitrary. 

It is precisely in this kind of situation that our general framework is most useful, since 
it permits to verify that $\cT_{\sH}$ indeed has fractal dimension $3$, and 
more precisely that $\DimM(T_{\sH})=\DimH(T_{\sH})=3$ with probability one. This is reminiscent of 
the fact that, for any $\gamma < 1/3$, there exists excursions equivalent to $\sH$ that have
$\gamma$-H{\"o}lder continuous paths. As expected, unlike the process $\sZ$, $\sH$ is suboptimal with 
respect to path regularity: the following proposition is given for the sake of completeness, 
and its proof can be found in the supplementary material.

\begin{prop}\label{prop:opt}
Let $\varrho = 1-\frac 2 3 \sqrt{2} = 0.057\ldots$ Then, almost surely, 
\[\sup\{\gamma>0: \sH \text{ is }\gamma\text{-H\"older continuous}\}= \varrho.\]
\end{prop}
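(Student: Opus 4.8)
The claim concerns the exact optimal Hölder exponent of the process $\sH$ satisfying \eqref{eqH}, namely $\varrho = 1 - \tfrac{2}{3}\sqrt{2}$. Since $\sH$ satisfies the fixed-point equation \eqref{eqH} with $K=2$, structural tree a path, $\alpha = 1/3$, and scaling vectors $\cR = (W, 1-W)$ (with $W$ uniform) and $\cS = (\Delta_1, \Delta_2)$ Dirichlet$(2,1)$ independent of $\cR$, the key feature is that the \emph{space} scaling $W^{1/3}$ is \emph{independent} of the \emph{time} scaling $\Delta_1$ on the corresponding interval of length $\Delta_1$. This decoupling is exactly what makes $\sH$ worse than the ``canonical'' exponent $\alpha = 1/3$: on a nested sequence of intervals, the product of time-scalings shrinks like $\prod \Delta^{(k)}$ while the product of space-scalings shrinks like $\prod (W^{(k)})^{1/3}$, and the ratio of log-rates is governed by $\E[\log \Delta_1]$ versus $\tfrac13\E[\log W]$ rather than being tied together.

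\textbf{Upper bound on the exponent (i.e.\ $\sH$ is \emph{not} $\gamma$-Hölder for $\gamma > \varrho$).} The plan is to use the almost sure recursive decomposition from Section~\ref{tech-expan}: iterate \eqref{eqH} along the spine $\vartheta_n = 1\ldots 1 \in \Theta_n$ (always recursing into the first, ``big'', piece). After $n$ steps, $\sH$ restricted to the interval $\Lambda_{\vartheta_n}$, which has Lebesgue measure $\cL(\vartheta_n) = \prod_{k=1}^n \Delta_1^{(k)}$, is an affine image of a copy of $\sH$ rescaled in space by $\cV(\vartheta_n)^{1/3} = \prod_{k=1}^n (W^{(k)})^{1/3}$. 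Hence the oscillation of $\sH$ over an interval of length $\approx \cL(\vartheta_n)$ is of order $\cV(\vartheta_n)^{1/3}\cdot \|\sH^{(\text{copy})}\|$, and (using $\E[\|\sH\|^m] < \infty$ from Theorem~\ref{thm:treeuniq} and a Borel--Cantelli argument to control the copy's norm) the local Hölder exponent at a spine point is at most
$$\limsup_{n\to\infty} \frac{\log \cV(\vartheta_n)^{1/3}}{\log \cL(\vartheta_n)} = \frac{\tfrac13 \E[\log W]}{\E[\log \Delta_1]} \quad \text{a.s.}$$
by the strong law of large numbers. A short computation of these two expectations — $\E[\log W] = -1$ for $W$ uniform, and $\E[\log \Delta_1] = \psi(2) - \psi(3) = -\tfrac12$ for Dirichlet$(2,1)$ — gives the ratio $\tfrac{-1/3}{-1/2} = \tfrac23$; but this is not $\varrho$, so in fact one must optimize over the \emph{choice of which child to descend into at each level} (descending into the small piece of size $\Delta_2 = 1-\Delta_1$, with space scaling $(1-W)^{1/3}$, gives a different rate, and one is free to mix). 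The exponent one actually hits at a ``generic'' nested point is the infimum over admissible descent strategies of (space log-rate)$/$(time log-rate); a Lagrange/variational computation over the mixing proportion $p\in[0,1]$ of big-vs-small descents yields the minimizer and the value $\varrho = 1 - \tfrac23\sqrt2$. I expect this optimization — identifying the worst descent profile and showing the resulting limsup is exactly $\varrho$ — to be the main obstacle; it is essentially a large-deviations / Legendre-transform computation, and one must be careful that along a fixed deterministic descent sequence the SLLN applies but that one can also \emph{choose} the descent adaptively to realize the infimum on a set of positive measure (then use the self-similar independence across the tree $\Theta$ to upgrade to probability one).

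\textbf{Lower bound on the exponent (i.e.\ $\sH$ \emph{is} $\gamma$-Hölder for every $\gamma < \varrho$).} Here the plan is a chaining/modulus-of-continuity argument over the random partition generated by $\Theta$. For $s,t \in [0,1]$ close, locate the smallest $n$ and node $\vartheta\in\Theta_n$ such that $s,t$ both lie in $\Lambda_\vartheta$ but in different children; then $|t-s| \gtrsim$ the length of the smaller child-interval and $|\sH(t)-\sH(s)| \lesssim \cV(\vartheta)^{1/3}(\|\sH^{(\vartheta 1)}\| + \|\sH^{(\vartheta 2)}\| + \text{spine term})$. One needs a uniform-in-$\vartheta$ tail bound: $\P(\cV(\vartheta)^{1/3} > \cL(\vartheta)^{\gamma}\text{ for some }\vartheta\in\Theta_n)$ decays summably in $n$ for $\gamma < \varrho$, which follows from a union bound over the $2^n$ nodes together with the Cramér/Chernoff estimate on $\log\cV(\vartheta) - 3\gamma^{-1}\log\cL(\vartheta)$ — the exponent $\varrho$ being precisely the threshold at which the rate function stays positive for \emph{all} descent profiles simultaneously (this is the dual side of the same Legendre computation). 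Combining this with the polynomial moments of $\|\sH\|$ (another Borel--Cantelli over $\Theta_n$ to bound all the copies' norms by a power of $n$) gives, almost surely, $|\sH(t) - \sH(s)| \le C|t-s|^{\gamma}$ for all $s,t$, i.e.\ $\gamma$-Hölder continuity; letting $\gamma \uparrow \varrho$ finishes. I would reference the Appendix for the numeric identity $\inf_{p}(\text{rate}) \Leftrightarrow \varrho = 1 - \tfrac23\sqrt2$ so as not to repeat it in the body, and I'd note explicitly that both bounds hinge on the same variational constant, which is why the supremum is attained exactly at $\varrho$.
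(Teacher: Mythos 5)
Your high-level strategy for the negative direction is essentially the paper's. The paper iterates the almost sure decomposition from Section~\ref{tech-expan}, initialized at the deterministic mean function $h(t)=\E[\sH(t)]=\kappa'\sqrt{t(1-t)}$ (so that the local oscillation on a cell $\Lambda_\vartheta$ can be bounded below deterministically, using $|h(y)-h(x)|\ge r|y-x|^2$), and then shows that the set of $\vartheta\in\Theta_n$ for which $\cV(\vartheta)^{1/3}\gtrsim \cL(\vartheta)^{\gamma}$ survives as a supercritical branching process when $\gamma>\varrho$; the supercriticality threshold is obtained via Cram\'er's theorem, and the upgrade from positive probability to almost sure is by Kolmogorov's zero--one law (which you gesture at with ``self-similar independence'' but do not name). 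One small but genuinely useful observation you are likely to need and did not mention: when summing $\Prob{\cV(\vartheta)^{1/3}>\cL(\vartheta)^{\gamma}}$ over the $2^n$ nodes, the paper exploits that the marginal $\xoverline{\cS}=\cS_I$ with $I$ uniform on $\{1,2\}$ is itself uniform on $[0,1]$ (since $\Delta_1\sim\mathrm{Beta}(2,1)$ and $\Delta_2\sim\mathrm{Beta}(1,2)$ average to a uniform), so the whole sum collapses to $2^n\,\Prob{\prod W_i^{1/3}>n^{2-\alpha}\prod \xoverline{W}_i^{\alpha}}$ with both $W_i$ and $\xoverline{W}_i$ iid uniform; this is what makes the large-deviations computation tractable, and without it the ``mixing proportion'' formulation you describe will be messier. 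Also note that you never actually carry out the Legendre/Cram\'er computation that produces $\varrho$: the threshold arises from $I(0)=\log\frac{(3\alpha+1)^2}{12\alpha}<\log 2$, i.e.\ $9\alpha^2-18\alpha+1<0$, whose lower root is exactly $1-\tfrac{2}{3}\sqrt{2}$; this is the technical heart and should be verified, not asserted.

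For the positive direction you propose a chaining argument over the random partition $\{\Lambda_\vartheta\}$ with a union bound and Chernoff estimates, together with a separate control of the sup-norms of the subtree copies. This is a genuinely different route from the paper, which instead proves the pointwise moment bound $\E[\sH(x)^p]\le K(x(1-x))^{2p/(p+3)-\varepsilon}$ (Lemma~\ref{prop:hoelderHpos}, via a Gronwall argument applied to a moment recursion inherited from \eqref{eqH}) and then invokes Kolmogorov's continuity criterion; optimizing $\frac{p-3}{p(p+3)}$ over $p$ gives exactly $\varrho$. Your chaining route is plausible and, pleasingly, it reuses the same Cram\'er threshold from the other side, which makes the two bounds manifestly dual. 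But it is not obviously simpler: you must handle the fact that $\Lambda_\vartheta$ is generally a union of two intervals, you must track the spine contributions $(\cR^\vartheta_1)^{\alpha}\sH^{(\vartheta 1)}(\xi_1^\vartheta)$ that are constant across children but vary between levels, and you must make the two Borel--Cantelli steps uniform in $\vartheta$. So both approaches require real work; the paper buys a cleaner modulus-of-continuity argument at the cost of the moment Lemma, whereas yours buys symmetry between the two halves at the cost of a delicate uniform chaining estimate. Either can be completed, but neither is finished as written, and the missing variational computation identifying $\varrho$ is common to both.
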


Corollaries~\ref{coro1} (c),~\ref{cor:uni} (c) and~\ref{cor:main} (c) follow as in the recursive case discussed in the previous section.

\subsection{Recursive $k$-angulations} \label{sec:kang}

In this section we consider a generalization of the lamination process described in 
Sections \ref{sec:Z} and \ref{sec:H}, where, for some fixed $k \geq 2$, in each step, 
one adds the $k$-gon connecting $k$ points sampled on the circle (for a precise 
definition, see below). Certain quantities in this model were studied by
Curien and Peres \cite{curper}. Again, we are interested in non-intersecting 
structures and investigate  both the recursive and the homogeneous model. 
Of course, for $k = 2$, we recover the processes studied in Sections \ref{sec:Z} 
and \ref{sec:H}.
The techniques in \cite{BrSu2013a} and \cite{legalcu} yield detailed information 
on the height processes of the corresponding dual trees and their limits. 
For example, in \cite{BrSu2013a}, we gave explicit expressions for the leading 
constants and rates of convergence for the mean functions. 
Furthermore, the limit mean function had already been obtained in \cite{legalcu} 
(up to a multiplicative constant). Most of these results do not play a 
significant role in proving convergence of the dual trees or determining the 
fractal dimensions of the limiting objects. (The leading constants in 
Propositions \ref{prop:kgon1} and \ref{prop:kgon2}  could be given 
by lengthy implicit formulas but they are of no particular relevance.)

\paragraph{The recursive $k$-angulation}
In the recursive framework, in each step, we choose $k$ points uniformly at random 
on the circle and insert the corresponding $k$-gon if none of its edges intersects 
any previously inserted one. The dual tree $\sT_n$ is defined analogously to 
the case $k=2$ upon identifying  fragments in the decomposition with nodes in $\sT_n$. 
It is endowed with the graph distance $d_n$. The \emph{mass} of a fragment in the 
decomposition of the disk is the one-dimensional Lebesgue measure of its intersection 
with the circle. Fragments with positive mass will subsequently be called 
\emph{essential} (all fragments are essential for $k=2$.) 
See Figure~\ref{fig:k-angulation} for an illustration. Keeping the notation 
introduced in \cite{BrSu2013a}, we denote by $\mathcal C_n(s)$ the depth of the 
node associated to the fragment covering the point $s$ in the tree $\cT_n$. Here, and subsequently, we identify the unit interval with $\partial \sD$ through $s \mapsto (\cos 2 \pi s, \sin 2 \pi s).$  (We do not 
indicate $k$ in the notation for the height functions.) Denote the first 
inserted $k$ points in increasing order by $0 \leq U_1 \leq \ldots \leq U_k \leq 1$ and define
$\Delta_1 = 1 - U_k + U_1$, $\Delta_i = U_i - U_{i-1}, 2 \leq i \leq k$, 
as well as $\xi^* = U_1 / \Delta_1$. Furthermore, let $I_n = (I_n^{(1)}, \ldots, I_n^{(k)})$, 
where $I_n^{(i)}$ is the number of attempted insertions up to time $n$ in the  fragment 
of mass $\Delta_i$. Given $(U_1, \ldots, U_k)$, for any $1 \leq i \leq k$, 
the random variable $I_n^{(i)}$ has the binomial distribution with parameters $n-1$ 
and $\Delta_i^k$. In particular, we have, almost surely, 
\begin{align}\label{asI}
\frac{I_n}{n} \to (\Delta_1^k, \ldots, \Delta_k^k). 
\end{align}

\begin{prop} \label{prop:kgon1} 
Let $k \geq 2$ and $N_n$ be the number of inserted $k$-gons at time $n$. Then, as $n \to \infty$, we have $n^{-1/k} N_n \to c_k$ in probability and with respect to all moments, where $c_k >0$ is a constant. 
\end{prop}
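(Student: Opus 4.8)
The plan is to discard the dual-tree geometry entirely and reduce the count of inserted $k$-gons to a self-similar mass fragmentation. The first observation is that an attempt is accepted precisely when its $k$ points all fall into the closure of a single essential fragment, so, given the current masses $(m_F)$ of the essential fragments, the acceptance probability is exactly $\sum_F m_F^k$; and since every fragment of a lamination is convex, an accepted $k$-gon splits its (convex) host of mass $m$ into a massless interior and $k$ essential pieces of masses $m D_1,\dots,m D_k$, where --- because the mass of a fragment is the total length of the circle-arcs on its boundary, a measure space of total mass $m$ onto which $k$ i.i.d.\ uniform points are dropped --- the proportions $(D_1,\dots,D_k)$ are \emph{exactly} Dirichlet$(1,\dots,1)$, irrespective of how the host is subdivided by chords. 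Thus the mass process is a conservative self-similar fragmentation of index $k$ with Dirichlet$(1,\dots,1)$ dislocations, read off at the Bernoulli$(\sum_F m_F^k)$ acceptance epochs (and the number of essential fragments is $(k-1)N_n+1$, deterministically). I would then Poissonise the attempt clock --- attempts arriving at unit rate in a parameter $\theta$, so fragment $F$ is hit independently at rate $m_F^k$ --- to obtain a genuine self-similar fragmentation, and write $N(\theta)$ for its number of dislocations up to time $\theta$. It then suffices to prove $\theta^{-1/k}N(\theta)\to c_k$ in every $L^p$ and de-Poissonise by evaluating at the time of the $n$-th attempt, which concentrates around $n$; by monotonicity of $N(\cdot)$ and the decay of its mean (below), this transfer costs only $o(n^{1/k})$ in $L^p$.

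For the mean, I would decompose at the first dislocation, which occurs at an independent exponential time $\sigma_1$ and, by self-similarity of index $k$, spawns $k$ independent rescaled copies of the whole fragmentation with time sped up by $D_i^{-k}$; this gives, for $g(\theta):=\E{N(\theta)}$,
\[
 g(\theta) = 1-e^{-\theta} + k\int_0^\theta e^{-\sigma}\,\mathbf{E}\big[\,g\big((\theta-\sigma)D_1^{\,k}\big)\big]\,d\sigma .
\]
Taking the Laplace transform in $\theta$ turns the convolution into a product and the self-similar rescaling into a Mellin-type operator; the indicial exponent of the resulting functional equation is the unique $\alpha$ with $k\,\mathbf{E}[D_1^{\,k\alpha}]=1$, namely $\alpha=1/k$ (as $k\,\mathbf{E}[D_1]=1$), and since $\tfrac{d}{d\alpha}\,k\,\mathbf{E}[D_1^{\,k\alpha}]\big|_{\alpha=1/k}=k^2\,\mathbf{E}[D_1\log D_1]$ is finite and strictly negative, a Karamata Tauberian theorem should yield $g(\theta)\sim c_k\,\theta^{1/k}$ and $g'(\theta)=O(\theta^{1/k-1})$ with an explicit $c_k\in(0,\infty)$ whose value is immaterial here. (Alternatively one could run the analogous singularity analysis on the discrete recursion $\E{N_n}=1+k\,\mathbf{E}\big[\E{N_I}\big]$ with $I\mid\Delta_1\sim\bin(n-1,\Delta_1^{\,k})$.)

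To upgrade to every $L^p$, I would use the recursive distributional identity $N(\theta)\eqdist 1+\sum_{i=1}^k \widetilde N^{(i)}\big((\theta-\sigma_1)D_i^{\,k}\big)$ --- with $\widetilde N^{(1)},\dots,\widetilde N^{(k)}$ i.i.d.\ copies of $(N(\cdot))_{\theta\ge0}$ independent of $(\sigma_1,D_1,\dots,D_k)$ --- and induct on $p\ge 1$ to get $\E{N(\theta)^p}\sim c_k^{\,p}\,\theta^{p/k}$. The induction closes because, in $\mathbf{E}\big[(\sum_i\widetilde N^{(i)}(\cdot))^p\big]$, the diagonal contribution carries the factor $k\,\mathbf{E}[D_1^{\,p}]$, which lies strictly below $k\,\mathbf{E}[D_1]=1$ for every $p>1$ (it equals $\tfrac2{k+1}$ at $p=2$), while the mixed terms are of strictly smaller order by the lower-moment estimates; in particular $\V{N(\theta)}=o(\theta^{2/k})$, the $\theta^{1/k}$-term of $\mathbf{E}[N(\theta)\mid\sigma_1,D_1,\dots,D_k]$ being deterministic because $\sum_i D_i=1$. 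Then $\theta^{-1/k}N(\theta)\to c_k$ in $L^p$ for all $p$, and de-Poissonising gives $n^{-1/k}N_n\to c_k$ in probability and in every $L^p$.

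I expect the main obstacle to be the asymptotic analysis of the renewal equation for $g$: one has to extract both the exponent $1/k$ and the positive constant $c_k$ from the Laplace/Mellin functional equation by a Tauberian argument, and control the de-Poissonisation uniformly in $p$. This is exactly where the \emph{critical} nature of the recursion bites --- the scaling-invariant equation $X\eqdist 1+\sum_{i=1}^k D_i X^{(i)}$ has no finite-mean solution --- whereas the reduction to a self-similar fragmentation and the moment bootstrap should be routine adaptations of the arguments in \cite{legalcu, curper, BrSu2013a}.
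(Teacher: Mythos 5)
Your proposal follows the same skeleton as the paper's proof: Poissonise the attempt clock so that the essential-fragment masses evolve as a conservative self-similar fragmentation of index $k$ with Dirichlet$(1,\ldots,1)$ dislocations (and you correctly observe that the split proportions are Dirichlet regardless of the shape of the host, because only the arc-lengths on the circle matter), obtain the $\theta^{1/k}$-order continuous-time asymptotics, de-Poissonise via the concentration of the Gamma$(n)$ time $\tau_n$, and bootstrap to all $L^p$ via the recursive decomposition at the first split, whose subcriticality $k\,\E{D_1^{\,p}}<1$ for $p>1$ closes the induction. The one substantive difference lies at the heart of the argument: where you propose to extract $g(\theta)=\E{N(\theta)}\sim c_k\theta^{1/k}$ from the renewal equation by Laplace/Mellin transform and a Karamata-type Tauberian theorem, the paper simply invokes Theorem~1 of Bertoin and Gnedin \cite{BeGn2004} on the asymptotic number of fragments in self-similar fragmentations, which delivers $t^{-1/k}\mathcal N_t\to c_k$ in probability and in $L^2$ in one stroke. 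Your route is therefore more self-contained but reproves exactly the content of that citation, and — as you already flag — the Tauberian extraction of both the exponent and the positive constant from the transform functional equation is the delicate step; making it rigorous would take roughly the same effort as the renewal-theoretic argument in \cite{BeGn2004}. You also reverse the order of de-Poissonisation and moment bootstrapping: you run the $L^p$ induction in continuous time and then de-Poissonise in every $L^p$, whereas the paper de-Poissonises only for $L^2$ and then bootstraps in the discrete-time model using $\tilde N_n \eqdist 1+\sum_i \bigl(I_n^{(i)}/n\bigr)^{1/k}\tilde N^{(i)}_{I_n^{(i)}}$ together with the almost sure convergence $I_n/n\to(\Delta_1^k,\ldots,\Delta_k^k)$. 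Both orderings work; the paper's choice keeps the Poissonised side purely quadratic, which is convenient because that is all that Bertoin--Gnedin supply directly.
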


\begin{proof}
Let $\tau_1, \tau_2, \ldots$ be the times of homogeneous a Poisson point process  with 
unit intensity on $\R^+$. We consider the continuous-time analogue of $N_n, n \in \N$ 
denoted by $\mathcal N_t, t > 0$ where, for all times $\tau_i, i \geq 1$, a set of $k$ 
independent points are drawn at random on the circle and the corresponding $k$-gon inserted 
if the decomposition remains non-crossing. In other words, $\mathcal N_{t} = N_i$ 
for $t \in [\tau_i, \tau_{i+1})$ where $\tau_0 := 0$. It is easy to see and explained 
in detail in \cite{legalcu} for $k = 2$, that this process can alternatively be obtained 
without  the necessity of rejecting any $k$-gons as follows: starting with the disk 
at time $t = 0$, add a $k$-gon chosen uniformly at random after an exponentially 
distributed time with mean one. Then, independently on the $k$ essential sub-fragments, 
run the same process with times slowed down by a factor $x^k$ where $x$ denotes 
the mass of the fragment. 
The masses of essential fragments at time $t > 0$ in this process constitute a 
conservative fragmentation process with index of self-similarity $k$ and reproduction 
law Dirichlet$(1, \ldots, 1)$. 
Hence, by Theorem~1 in \cite{BeGn2004}, we deduce $t^{-1/k} \mathcal N_t \to c_k$ 
in probability and in $L_2$, with $c_k$ as in the proposition
In particular, $\tau_n^{-1/k} \mathcal N_{\tau_n} \to c_k$ in probability 
as $n \to \infty$. In order to  obtain moment  convergence, note that, 
for any $\varepsilon > 0$, by monotonicity and since $N_n \leq n$ almost surely, 
$$\tau_n^{-2/k} \mathcal N_{\tau_n}^2 
\leq ((1-\varepsilon)n)^{-2/k} \mathcal N^2_{(1+\varepsilon)n} 
+ \tau_n^{2-2/k} \I{|\tau_n - n| \notin (-\varepsilon n, \varepsilon n)}.$$
By the $L_2$ convergence for the continuous-time process and the concentration 
of $\tau_n$ (which has a Gamma$(n)$ distribution), the right hand side is uniformly 
integrable. Hence, $\tau_n^{-1/k} \mathcal N_{\tau_n} \to c_k$ in $L_2$.
Since $\tau_n / n \to 1$ almost surely and with convergence of all moments 
and $\tau_n,  \mathcal N_{\tau_n}$ are independent, we obtain the convergence 
in probability and in $L_2$. Finally, let $\tilde N_n = n^{-1/k} N_n$. Then, 
$$ \tilde N_n 
\stackrel{d}{=} 
1 +  \sum_{i=1}^k \left(\frac{I_n^{(i)}}{n}\right)^{1/k} \tilde N^{(i)}_{I_n^{(i)}} ,$$
where $(\tilde N^{(1)}_n)_{n\ge 1}, \ldots, (\tilde N^{(k)}_n)_{n\ge 1}$ are independent 
copies of $(N_n)_{n\ge 1}$, independent of $I_n$. Using \eqref{asI}, it is easy 
to prove that $\tilde N_n$ is bounded in $L_m, m \geq 1$ by induction over $m$ 
since we have already shown it for $m=1, 2$.
\end{proof}

By construction, the random process 
$(\cC_n(s))_{s \in [0,1]}$ satisfies the following recurrence in distribution on the space of c{\`a}dl{\`a}g functions~ 
endowed with the Skorokhod $J_1$-topology:
\begin{align} \label{rec:Cn}
 \cC_n(\,\cdot\,) \stackrel{d}{=}\, & 
\mathbf{1}_{[0,U_1)}(\,\cdot\,) \cC^{(1)}_{I_n^{(1)}}\left(\frac{\,\cdot\,}{\Delta_1}\right) 
+ \mathbf{1}_{[U_k,1]}(\,\cdot\,) \cC^{(1)}_{I_n^{(1)}}\left(\frac{\,\cdot\,-U_k}{\Delta_1}\right) \notag\\
&+ \sum_{i=2}^{k-1} \mathbf{1}_{[U_{i-1},U_i]} (\,\cdot\,) \left(\cC^{(i)}_{I_n^{(i)}}\left(\frac{\,\cdot\,-U_{i-1}}{\Delta_i}\right) + \cC^{(1)}_{I_n^{(1)}}\left(\xi^*\right)\right).
\end{align}
Here $(\cC^{(1)}_i(\cdot))_{i\ge 0}, \ldots, (\cC^{(k)}_i(\cdot))_{i\ge 0}$ are independent 
copies of $(\cC_i(\cdot))_{i\ge 0}$ independent of $(U_1, \ldots, U_k, I_n)$.
The first step of our analysis is to investigate $ \cC_n(\xi)$ for uniformly 
chosen point $\xi$. In \cite{BrSu2013a}, an explicit expression for the mean 
was obtained by solving the underlying recursion. Here, we proceed as in \cite{legalcu} 
relying on results from fragmentation theory. Subsequently, let $\alpha_k \in (0,1)$ 
be the unique solution to 
\begin{align*} 
g_k(x) 
& := \E{\Delta_1^{x}} + (k-1) \E{\mathbf{1}_{[U_1, U_2]}(\xi) \Delta_2^{x}}  \notag\\
& = \frac{k!\Gamma(x+2)}{\Gamma(k+x+1)}  + \frac{(k-1) k!\Gamma(x+2)}{\Gamma(k+x+2)}  =  1. 
\end{align*}
(Note that $g_k(x)$ decreases in $x$, $g_k(0) > 1$ and $g_k(1)  < 1$. Thus, $\alpha_k$ exists.)

\begin{prop} \label{prop:kgon2}
Let $k \geq 2$.   
As $n \to \infty$, in probability and with convergence of all moments,  we have
$n^{-\alpha_k/k} \mathcal C_n(\xi) \to X_k$ for some random variable $X_k$ with mean 
$\kappa_k := \Ec{X_k} > 0$. 
\end{prop}

\begin{proof}
We use the same continuous-time model as in the previous proof. 
Let $\mathcal C_t(\xi)$ be the height of the node associated to $\xi$ in the dual tree at time $t$ and $E_t(\xi)$ be the number of essential fragments associated to nodes on the path from $0$ to $\xi$. For $k \geq 3$, we have $\mathcal C_t(\xi) = 2 (E_t(\xi) - 1)$. As explained in \cite{legalcu} in the case $k = 2$, the sizes of essential fragments form a non-conservative fragmentation process with index of self-similarity $k$ and reproduction law $\Law((\Delta_1, \mathbf{1}_{[U_1, U_2)}(\xi) \Delta_2, \ldots, \mathbf{1}_{[U_{k-1}, U_k)}(\xi) \Delta_k))$. Hence, by \cite[Theorem 1]{BeGn2004}, as $t \to \infty$, $t^{-\alpha_k/k} \E{E_t(\xi)} \to \kappa_k/2$ for some $\kappa_k >0$. Furthermore, by  \cite[Theorem 5]{BeGn2004}, there exists a random variable $X'_k$ such that, $t^{-\alpha_k/k} E_t(\xi) \to X_k'$ in $L_2$. With $X_k = 2 X_k'$, the claim follows by standard depoissonization arguments as in the previous proof.
\end{proof}

Let $\cY_n(s) = \mathcal C_n(s)/\E{\mathcal C_n(\xi)}$. We expect that, as $n \to \infty$, 
we have $n^{-\alpha_k/k} \E{\mathcal C_n(s)} \to m_k(s) $ for some continuous 
excursion $m_k \in \Cex$ with $\E{m_k(\xi)} = \kappa_k$. Thus, from \eqref{fix:kgon}, 
it follows that, if $\cY_n(s) \to \sZ(s)$
for some continuous process $\sZ$, 
then the limit should have mean function $m_k / \kappa_k$ and satisfy 
\begin{align} \label{fix:kgon}
\sZ(\,\cdot\,) 
\stackrel{d}{=} \, & \mathbf{1}_{[0,U_1)}(\,\cdot\,) \Delta_1^{ \alpha_k} \sZ^{(1)}\left(\frac{\,\cdot\,}{\Delta_1}\right) 
+ \mathbf{1}_{[U_k,1]}(\,\cdot\,) \Delta_1^{ \alpha_k} \sZ^{(1)}\left(\frac{\,\cdot\,-U_k}{\Delta_1}\right) \notag \\
&+ \sum_{i=2}^{k} \mathbf{1}_{[U_{i-1},U_i]} (\,\cdot\,)\left( \Delta_i^{ \alpha_k} \sZ^{(i)}\left(\frac{\,\cdot\,-U_{i-1}}{\Delta_i}\right) + \Delta_1^{ \alpha_k} \sZ^{(1)}\left(\xi^*\right)\right),
\end{align}
where $\sZ^{(1)}(\cdot), \ldots, \sZ^{(k)}(\cdot)$ are independent copies of $\sZ(\cdot)$ independent of $(U_1, \ldots, U_k)$. This fixed point equation is of type \eqref{fix:X} where 
$ 
K = k, L = k+1, 
\varpi_3 = \ldots = \varpi_k = 1, 
\mathcal R = \mathcal S \sim \text{Dirichlet}(2, 1, \ldots, 1). 
$  
Let $\sZ$ be the unique process (in distribution) solving \eqref{fix:kgon} with $\E{\sZ(\xi)} = \kappa_k$ whose existence is guaranteed by Theorem \ref{thm:treeuniq}. (We use the same notation for the limit process as in Section \ref{sec:Z} without indicating the choice of $k$.) The verification of $\mathcal Y_n \to \sZ$ in distribution in the space of c{\`a}dl{\`a}g functions can be worked out by the same arguments as in \cite[Section 3]{BrSu2013a} relying on the contraction method both for real-valued random variables and for regular processes. Here, starting with an independent family $(U^{(i)}_1, \ldots, U^{(i)}_k), i \geq 1$ of copies of $(U_1, \ldots, U_k)$ one constructs the sequence $\mathcal Y_n$ and its limit $\sZ$ satisfying \eqref{fix:kgon} on the same probability space and shows convergence in probability. The steps are very similarly to the arguments in the proof of Proposition \ref{prop:conlimit}. First, one shows the convergence $\mathcal Y_n(\psi) \to \sZ(\psi)$ in $L_2$, where $\psi$ corresponds to the point $\xi^*$ in the coupling. From the last proposition we know that this convergence also holds in $L_m$ for all $m \geq 1$. Finally, one shows that $\E{\| \mathcal Y_n - \sZ \|^m} \to 0$ for all $m \geq 1$. (The almost sure convergence in \cite{BrSu2013a} relies on a convergence rate for the mean of $\mathcal Y_n(\xi)$. We do not pursue this line here but note that, sufficient rates in the continuous-time case can be extracted from \cite{BeGn2004}, compare the discussion of Theorem 1 there.)
Summarizing, we obtain the following result.

\begin{thm}
Let $k \geq 3$. In distribution with respect to the Gromov--Hausdorff topology, we have  $$(\sT_{n}, n^{-\alpha_k / k} d_n) \to (\sT_{\sZ}, d_{\sZ}).$$
In distribution, the process $\sZ$ is the unique continuous excursion solving \eqref{fix:kgon} up to a multiplicative constant.
Almost surely, $\DimM(\sT_\sZ) = \DimH(\sT_{\sZ}) = \alpha_k^{-1}$. For any $\gamma < \alpha_k$, almost surely, there exists a process $\tilde {\sZ}$ equivalent to $\sZ$ which has $\gamma$-H{\"o}lder continuous paths.
\end{thm}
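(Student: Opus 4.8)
The four assertions will be obtained as follows. The uniqueness of $\mathcal Z$ is immediate from what precedes: \eqref{fix:kgon} is an instance of \eqref{fix:X} with $K=k$, structural tree $\Gamma$ the star with centre $1$ and leaves $2,\dots,k$, and $\mathcal R=\mathcal S\sim\text{Dirichlet}(2,1,\dots,1)$, while the space exponent $\alpha_k$ is precisely the $\alpha$ of \eqref{def:alpha} for this data: indeed \eqref{def:alpha} reads $\E{\Delta_1^\alpha}+(k-1)\E{\Delta_2^{\alpha+1}}=1$, which is exactly $g_k(\alpha)=1$ once one notes $\E{\mathbf{1}_{[U_1,U_2]}(\xi)\Delta_2^x}=\E{\Delta_2^{x+1}}$. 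Theorem~\ref{thm:treeuniq}~\emph{i)} then gives the stated uniqueness (up to a multiplicative constant) and also records that, almost surely, $\mathcal Z>0$ on $(0,1)$, $\mathcal Z$ is no-where monotonic, and $\|\mathcal Z\|$ has finite moments of all orders; these facts are used below. For the scaling limit I would follow the route sketched before the statement: Proposition~\ref{prop:kgon2} gives $n^{-\alpha_k/k}\E{\mathcal C_n(\xi)}\to\kappa_k>0$, and the contraction method of \cite[Section~3]{BrSu2013a}---first at the gluing point $\xi^*$, then for the whole path---applied to \eqref{rec:Cn} yields $\mathcal Y_n=\mathcal C_n/\E{\mathcal C_n(\xi)}\to\mathcal Z$ in $\Do$ and in every $L^m$ (after the harmless renormalisation absorbing $\kappa_k$). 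Since $\mathcal Z$ is a.s.\ continuous this is uniform convergence, and replacing $\mathcal C_n$ by its linear interpolation moves the encoded tree by $o(1)$ in $\dgh$; the global Lipschitz continuity of $f\mapsto\sT_f$ then gives $(\sT_n,n^{-\alpha_k/k}d)\to(\sT_{\mathcal Z},d_{\mathcal Z})$ in the Gromov--Hausdorff sense.

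The lower bound $\DimH(\sT_{\mathcal Z})\ge\alpha_k^{-1}$ is a direct application of Theorem~\ref{thm:main3}: here $\xoverline{\mathcal R}=\mathcal S_I$ with $I$ uniform on $[k]$, so its density is $\tfrac1k\bigl(k(k-1)t(1-t)^{k-2}+(k-1)\,k(1-t)^{k-1}\bigr)=(k-1)(1-t)^{k-2}$, i.e.\ $\xoverline{\mathcal R}\sim\text{Beta}(1,k-1)$; hence $\E{\xoverline{\mathcal R}^{-\delta}}=(k-1)B(1-\delta,k-1)<\infty$ for every $\delta\in(0,1)$, and $\pc{\|\sT_{\mathcal Z}\|>0}=1$, so Theorem~\ref{thm:main3} applies. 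The matching upper bound $\DimoM(\sT_{\mathcal Z})\le\alpha_k^{-1}$ will follow, via the divider-dimension identity \eqref{aiz} (available since $\mathcal Z$ is no-where monotonic, hence no-where piecewise constant), from the statement that $\mathcal Z$ admits, for every $\gamma<\alpha_k$, an equivalent process with $\gamma$-H{\"o}lder paths---which is also the final assertion of the theorem. I stress that Corollary~\ref{cor:opt_Hol} is \emph{not} usable here: its hypothesis requires $\xoverline{\mathcal R}$ to have density at most $(K-1)^{-1}t^{(2-K)/(K-1)}$, and for $\xoverline{\mathcal R}\sim\text{Beta}(1,k-1)$ with $k\ge3$ this fails (it is violated near $t=\tfrac12$), so the H{\"o}lder regularity has to be proved directly.

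The plan for that is a recursive estimate, in the spirit of the $k=2$ case of \cite{legalcu}; no re-parametrization is needed since $\mathcal R=\mathcal S$ makes the time change of Proposition~\ref{prop:time_change} the identity. In the a.s.\ construction of Section~\ref{sec:construction_as}, on $\Lambda_\vartheta$ ($\vartheta\in\Theta_n$) the process $\mathcal Z$ equals $\cV(\vartheta)^{\alpha_k}$ times a time-rescaled copy of $\mathcal X^\vartheta$ plus an additive constant that is the same on all of $\Lambda_\vartheta$, and $|\Lambda_\vartheta|=\cV(\vartheta)$; because $\Gamma$ is a star, two points lying in distinct children of $\vartheta$ see the same spinal offset $(\mathcal R^\vartheta_1)^{\alpha_k}\mathcal X^{\vartheta 1}(\xi_1^\vartheta)$, which therefore cancels in the difference. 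Fixing $\gamma<\alpha_k$, the idea is to bound $\pc{\exists\,s<t,\ t-s\le h,\ |\mathcal Z(t)-\mathcal Z(s)|>h^\gamma}$ by descending the decomposition: for $h$ small such a pair lies in one level-one piece or in two adjacent ones, and the corresponding event at the reduced scale $h/\mathcal R_a$ carries the reduced threshold $(h/\mathcal R_a)^\gamma\,\mathcal R_a^{\gamma-\alpha_k}$ with $\mathcal R_a^{\gamma-\alpha_k}>1$, so each descent is strictly favourable. Using that the $\|\mathcal X^\vartheta\|$ have all polynomial moments and summing the union bound over dyadic scales $h=2^{-n}$ and over the finitely many pieces at each level gives $\sum_n\pc{\exists\,s<t,\ t-s\le2^{-n},\ |\mathcal Z(t)-\mathcal Z(s)|>2^{-n\gamma}}<\infty$, and Borel--Cantelli produces the $\gamma$-H{\"o}lder modulus. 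Granting this, \eqref{aiz} gives $\DimoM(\sT_{\mathcal Z})=1/\alpha_{\mathcal Z}\le\alpha_k^{-1}$, and together with $\DimH(\sT_{\mathcal Z})\ge\alpha_k^{-1}$ and $\DimH\le\DimuM\le\DimoM$ we conclude $\DimM(\sT_{\mathcal Z})=\DimH(\sT_{\mathcal Z})=\alpha_k^{-1}$ a.s., as well as the last assertion. The hard part is the adjacent-pieces case of the recursion: one must control $\mathcal Z$ near the cut points between consecutive pieces and quantify how the modulus of continuity of the sub-excursions near their endpoints is inherited. A naive $L^p$ version does not close---the constant from $(a+b)^p\le 2^{p-1}(a^p+b^p)$ grows in $p$ faster than $\E{\sum_a\mathcal R_a^{\,p(\alpha_k-\gamma)}}=k!/\prod_{j=1}^{k-1}\!\bigl(p(\alpha_k-\gamma)+j\bigr)$ decays---so one argues with tail probabilities of single increments as in \cite{legalcu} and extends that analysis to the $k$-piece decomposition.
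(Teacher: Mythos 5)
Your treatment of the first two assertions is sound: the uniqueness statement and the moment bounds come straight from Theorem~\ref{thm:treeuniq}~\emph{i)}, and your matching of $\alpha_k$ to the $\alpha$ of \eqref{def:alpha} is correct; the scaling-limit step is the contraction argument the paper sketches. The lower bound $\DimH(\sT_{\mathcal Z})\ge\alpha_k^{-1}$ via Theorem~\ref{thm:main3}, using $\xoverline{\mathcal R}\sim\text{Beta}(1,k-1)$ so $\Ec{\xoverline{\mathcal R}^{-\delta}}<\infty$, is also correct.

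Where the proposal diverges is in the upper bound. Your observation that Corollary~\ref{cor:opt_Hol} is \emph{not} applicable here is correct and genuinely important: its hypothesis requires $g(t)\le(K-1)^{-1}t^{(2-K)/(K-1)}$, and for $\xoverline{\mathcal R}\sim\text{Beta}(1,k-1)$ with $k\ge 3$ this fails (indeed any power-law bound $g(t)\le Ct^{\gamma-1}$ that applies forces $\gamma\le 1$ and $C\ge k-1$, so Theorem~\ref{thm:main2} only yields $\DimoM\le (k(k-1)-1)/\alpha_k$, which is not tight). The paper glosses over this, and you are right that something extra is needed. However, the route you then choose---proving $\gamma$-H\"older continuity of $\mathcal Z$ \emph{itself} directly by a recursive tail estimate, and deducing $\DimoM\le\alpha_k^{-1}$ via \eqref{aiz}---is precisely the thing the paper's closing Remark~1) singles out as open and ``seems difficult'' for $k\ge 3$; your own accounting ($2^{p-1}$ growth against the polynomial decay $k!/\prod_{j=1}^{k-1}(p(\alpha_k-\gamma)+j)$) shows the $L^p$/Kolmogorov--Chentsov argument does not close, and the replacement ``argue with tail probabilities as in \cite{legalcu}'' is left entirely unexecuted. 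As written, the hardest part of the theorem is not actually proved.

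The cleaner fix, which stays inside the paper's own machinery, is to run the Gronwall/renewal argument of Proposition~\ref{prop:tor} with the \emph{exact} density $g(t)=(k-1)(1-t)^{k-2}$ instead of a power-law envelope. The starting inequality $v(x)\le K\,\Ec{v(\xoverline{\mathcal R}^{-\alpha}x)}$ still holds, and feeding in the ansatz $v(x)=x^{-s}$ one finds that the relevant multiplicative factor is
\[
K\,\Ec{\xoverline{\mathcal R}^{\,s\alpha}}=\frac{k!\,\Gamma(1+s\alpha)}{\Gamma(k+s\alpha)},
\]
which equals $1$ precisely at $s\alpha=1$, i.e.\ $s=1/\alpha_k$. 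So the approximate equation is saturated exactly at the conjectured exponent; one then turns this into the genuine bound $v(x)=O(x^{-1/\alpha_k-\epsilon})$ for every $\epsilon>0$ by a short Gronwall or renewal-iteration argument (together with the a priori finiteness from Lemma~\ref{lem:M-finite} and the moment bounds on $\|\sT\|$), and Borel--Cantelli then gives $\DimoM(\sT_{\mathcal Z})\le\alpha_k^{-1}$ a.s.\ as in the proof of Theorem~\ref{thm:main2}. The H\"older-equivalent-process statement then follows \emph{from} the Minkowski bound via \eqref{aiz} and Theorem~\ref{thm:treeuniq}~\emph{iv)}, rather than the other way round. In short: your diagnosis of the gap is right, but the repair should go through a sharpened covering-number estimate rather than a direct path-regularity estimate on $\mathcal Z$, which remains the harder (and still open) problem.
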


\noi \textsc{Remark}. {\bf 1)} For $k = 2$, almost surely, the process $\sZ$ has $\gamma$-H{\"o}lder continuous path 
for any $\gamma < \alpha_2 = \beta$ \cite[Theorem 1.1]{legalcu}. We think that this remains true for all $k \geq 3$, that is, 
the function $\sZ$ is a good encoding of the tree. However,  we do not pursue this here.

\noi {\bf 2)} Let $L_n$ be the set of chords inserted at time $n$. By Proposition~\ref{prop:degrees}, 
$\sD(\cT_{\sZ}) = \{1, 2, k\}$, and it is not hard to see that $\overline{\bigcup_{n \geq 1} L_n}$ is indeed a 
$k$-angulation of the disk: every connected component in its complement is a convex $k$-gon with vertices on the circle. 

\medskip For $k \geq 3$, we have no explicit expression for $m_k =  \E{\sZ(t)}$. It follows from  \eqref{fix:kgon} that
$m_k$ is the unique continuous excursion on $[0,1]$ with $\E{m_k(\xi)} = \kappa_k$ such that \sloppy 
$m_k(t) = \E{\Phi(m_k, \ldots, m_k, \Delta, \Delta, \Xi)(t)}, t \in [0,1]$ where $K = k, L = k+1, 
\varpi_3 = \ldots = \varpi_k = 1$ and $
\Delta\sim \text{Dirichlet}(2, 1, \ldots, 1).$

  Using this observation and some geometric arguments relying directly on the construction of the process, one can show that $m_k$ is infinitely differentiable on $(0,1)$, symmetric at $t = 1/2$ and monotonically increasing and concave on $[0,1/2]$. Since we do not use these observation, we omit the details and the proofs.

\paragraph{The homogeneous $k$-angulation}

In the homogeneous setting, in each step, one \emph{essential} fragment is chosen uniformly at random and $k$ uniformly chosen points selected to create a new $k$-gon. At time $n$ this leads to a decomposition of the disk into
$1 + (k-1) n$ essential fragments and $n$ non-essential fragments. The definitions of $\sT_n^h, \mathcal C_n^h, U_1, \ldots, U_k, \Delta_1 = 1 - U_k + U_1, \Delta_i = U_i - U_{i-1}, 2 \leq i \leq K, \xi^* = U_1 / \Delta_1$ as well as 
$I_n = (I_n^{(1)}, \ldots, I_n^{(k)})$ should be clear by now. By construction, the random variable $I_n$ is independent of $(U_1, \ldots, U_k)$ and grows like the vector of occupation numbers in a Polya urn model. It is well-known that, almost surely, $I_n/n \to (\tilde \Delta_1, \ldots, \tilde \Delta_k),$
where $\tilde \Delta = (\tilde \Delta_1, \ldots, \tilde \Delta_k)\sim \text{Dirichlet}(1/(k-1), \ldots, 1/(k-1))$. By construction, the random process 
$(\cC^h_n(s))_{s \in [0,1]}$ satisfies a recursion analogous to \eqref{rec:Cn}, the only difference being the distribution of $(U_1, \ldots, U_k, I_n)$. 

\begin{prop} \label{prop:kgonhom}
Let $k \geq 2$. As $n \to \infty$, almost surely and with convergence of all moments, we have 
$n^{-1/(k+1)} \mathcal C_n^h(\xi) \to X^h_k$ for some random variable $X^h_k$ with mean $\kappa^h_k := \Ec{X^h_k} > 0.$ 
\end{prop}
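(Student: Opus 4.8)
The plan is to follow the strategy of Proposition~\ref{prop:kgon2}, but in the homogeneous model, where the self‑similar fragmentation input used there is not available (the split rate depends on the \emph{number} rather than the \emph{sizes} of the fragments); it will be replaced by an analysis of the $k$‑colour P\'olya urn $(I_n)_{n\ge1}$. As for the process $\sH$ (the case $k=2$) in \cite{BrSu2013a}, it is the homogeneity of the construction that will let us upgrade convergence in probability to almost sure convergence. Throughout I would work only with the one‑dimensional marginal $\mathcal C_n^h(\xi)$: the natural fixed‑point equation for the limit is of the type \eqref{fix:X} with $K=k$, $\vartheta_3=\dots=\vartheta_k=1$, $\mathcal S\sim\mathrm{Dirichlet}(2,1,\dots,1)$ (the arc‑lengths of the $k$ essential caps) and $\mathcal R\sim\mathrm{Dirichlet}(\tfrac1{k-1},\dots,\tfrac1{k-1})$ (the a.s. limits of $I_n/n$), independent of $\mathcal S$, but $\mathcal R$ and the re‑parametrised insertion points are \emph{not} independent (compare the discussion at the end of Section~\ref{sec:proof_hoelder}), so one cannot pass directly to the process level. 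For these parameters one computes $\Ec{\mathcal R_i^{s}}=1/(1+(k-1)s)$ and $\Ec{\mathbf 1_{[U_{i-1},U_i]}(\xi)}=\Ec{\mathcal S_i}=1/(k+1)$ for $2\le i\le k$, and checks that the unique solution of \eqref{def:alpha} is $\alpha=1/(k+1)$, which is the exponent appearing in the statement.

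First I would prove the moment bounds $\sup_{n\ge1}\sup_{t\in[0,1]}\Ec{(\mathcal C_n^h(t))^m}\le C_m\, n^{m/(k+1)}$ for every $m\ge1$, by induction on $m$. The inputs are the distributional recursion satisfied by $\mathcal C_n^h$ (the homogeneous analogue of \eqref{rec:Cn}), the independence of $I_n$ from $(U_1,\dots,U_k)$, and moment/concentration estimates for the P\'olya urn giving $\Ec{(I_n^{(i)}/n)^{m/(k+1)}}\to\Ec{\mathcal R_i^{m\alpha}}$ with a power‑of‑$n$ error. A short computation shows that the coefficient of the ``diagonal'' term in the recursion, namely $\Ec{\mathcal R_1^{m\alpha}}\cdot\tfrac{2k}{k+1}=\tfrac{2k}{k+1+m(k-1)}$, equals $1$ when $m=1$ and is $<1$ when $m\ge2$, so that the induction closes for $m\ge2$. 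The borderline case $m=1$ does not close this way and would be treated separately, by a direct analysis of the scalar P\'olya urn governing the depth of the fragment containing $\xi$, exactly as in \cite{BrSu2013a}; this gives both $\Ec{\mathcal C_n^h(\xi)}\le C n^{1/(k+1)}$ and a matching lower bound $\Ec{\mathcal C_n^h(\xi)}\ge c\, n^{1/(k+1)}$ with $c>0$.

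Next, convergence. Using the recursion together with $I_n/n\to\tilde\Delta$ and the moment bounds, one gets first $\mathcal C_n^h(\xi)\, n^{-1/(k+1)}\to X_k^h$ in distribution — the limit being $\mathcal Z(\xi)$ for the unique solution $\mathcal Z$ of \eqref{fix:X} with the parameters above, whose existence and uniqueness given its mean are part of Theorem~\ref{thm:treeuniq} — and then the moment bounds of the previous step turn this into $L_m$‑convergence for all $m\ge1$. To obtain the almost sure statement I would realise the entire homogeneous construction on a single probability space, using independent uniform $k$‑tuples of circle vertices together with the sequential ``pick a uniform essential fragment'' rule; under this coupling $n\mapsto\mathcal C_n^h(\xi)$ is non‑decreasing, and the nested sequence of essential fragments containing $\xi$, together with the numbers of $k$‑gons eventually allocated to each of them, is described by a family of coupled P\'olya urns whose colour‑fractions are bounded martingales and hence converge almost surely. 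Feeding this a.s. convergence, the recursion, and the uniform moment bounds (which provide uniform integrability) into the interpolation allowed by the monotonicity of $n\mapsto\mathcal C_n^h(\xi)$ yields $\mathcal C_n^h(\xi)\, n^{-1/(k+1)}\to X_k^h$ almost surely. Finally $\kappa_k^h=\Ec{X_k^h}=\lim_n n^{-1/(k+1)}\Ec{\mathcal C_n^h(\xi)}>0$ by the first‑moment lower bound (and $X_k^h>0$ a.s. either from that bound together with a $0$--$1$ argument on the fixed‑point equation, or directly from $\mathcal C_n^h(\xi)\to\infty$ a.s., which holds by Borel--Cantelli since the fragment containing $\xi$ is selected at step $n+1$ with probability $(1+(k-1)n)^{-1}$ and $\sum_n (1+(k-1)n)^{-1}=\infty$).

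The main obstacle is the last, almost‑sure, step: controlling the accumulation of depth across the growing number of ``spine'' levels and showing that the rescaled depth is almost surely Cauchy, via the coupled $k$‑colour urns. This is precisely the delicate part of the analysis carried out for $k=2$ in \cite{BrSu2013a} (where, in addition, a rate of convergence for $\Ec{\mathcal C_n^h(\xi)}$ was used); the passage to general $k$ only makes the bookkeeping heavier — more colours, and an extra deterministic depth increment $\varepsilon_k\in\{1,2\}$ per level — without, I expect, introducing an essentially new difficulty.
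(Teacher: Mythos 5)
The paper proves this proposition by a route quite different from yours. It embeds the homogeneous construction in continuous time: the total count of essential fragments $\mathcal N_t$ is then a pure-birth (Yule-type) branching process splitting $1\to k$ at unit rate, so $e^{-t(k-1)}\mathcal N_t$ is a uniformly integrable martingale whose a.s.\ limit is Gamma-distributed; the stopping time $\tau_n$ of the $n$-th split therefore satisfies $(k-1)n\,e^{-\tau_n(k-1)}\to\mathcal N$ a.s.\ and in mean. Independently of the splitting times, the number of essential fragments on the spine from $0$ to $\xi$, call it $E^h_t(\xi)$, is another branching process with offspring law $\Law(1+\mathbf 1_{[U_1,U_k)}(\xi))$ and Malthusian parameter $(k-1)/(k+1)$, so $e^{-t(k-1)/(k+1)}E^h_t(\xi)$ is again a u.i.\ martingale with a.s.\ limit $\mathcal E$. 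Since $\tau_n$ and $E^h_{\tau_n}(\xi)$ are independent, factoring $e^{-\tau_n(k-1)/(k+1)}E^h_{\tau_n}(\xi)$ into $\bigl(e^{-\tau_n(k-1)/(k+1)}((k-1)n)^{1/(k+1)}\bigr)\cdot\bigl(((k-1)n)^{-1/(k+1)}E^h_{\tau_n}(\xi)\bigr)$ immediately gives a.s.\ and $L_1$ convergence of the second factor, and higher moments are upgraded as in the recursive case. In particular, the a.s.\ convergence falls out of martingale convergence plus this independence trick, with no separate interpolation or coupling argument needed.

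Your route — computing $\alpha=1/(k+1)$ from \eqref{def:alpha}, getting moment bounds by induction on the distributional recursion (your coefficient $\frac{2k}{k+1+m(k-1)}$ is correct), identifying the distributional limit from the fixed-point equation, and then trying to upgrade to a.s.\ convergence via a coupled-P\'olya-urn / monotonicity argument — is a genuinely different and in principle viable discrete-time approach. It has the virtue of being closer in spirit to the rest of the paper's contraction-method machinery. But the step you rightly flag as the ``main obstacle,'' the a.s.\ convergence, is exactly where the paper's continuous-time embedding pays off: you would need to show that the rescaled depth is a.s.\ Cauchy, and ``feeding the colour-fraction convergence plus moment bounds into the monotone interpolation'' is, as you say, the delicate bookkeeping that in \cite{BrSu2013a} required quantitative rates for the mean (see the remark after Proposition~\ref{prop:kgon2}). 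The paper's proof sidesteps all of this. So: your plan is sound as far as the $L_m$-convergence and the identification of the limit are concerned, but the a.s.\ step in your outline is not yet a proof, and the paper's branching-process/martingale argument is the cleaner way to close that gap.
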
 

\begin{proof}
In the standard continuous-time embedding of the process, every essential fragment splits into $k$ essential subfragments at unit rate, independently of its mass. Hence, the number of essential fragments $\mathcal N_t, t \geq 0$, forms a continuous-time branching process with offspring distribution 
$\delta_k$.   
It is well-known that $e^{-t(k-1)} \mathcal N_t, t > 0,$ is a uniformly integrable martingale with mean one converging almost surely to a limiting random variable denoted by $\mathcal N$ having the Gamma$((k-1)^{-1}, (k-1)^{-1})$ distribution. For the time of $n$-th split $\tau_n$ in the process, since $\mathcal N_{\tau_n} =  1 + (k-1)n$, by the optional stopping theorem, it follows that $ (k-1)n e^{-\tau_n(k-1)} \to \mathcal N$ almost surely and in mean.
Similarly,  the number of essential fragments on the path from $0$ to $\xi$ in the dual tree denoted by $E^h_t(\xi)$ forms a branching process with offspring distribution
$\Law(1 + \mathbf{1}_{[U_1, U_k)}(\xi))$. Again, the process $e^{-t(k-1)/(k+1)}  E^h_t(\xi), t > 0,$ is a uniformly-integrable martingale with unit mean and we denote its almost sure limit by $\mathcal E$. Writing 
\begin{align*} & e^{-\tau_n(k-1)/(k+1)}  E^h_{\tau_n}(\xi) \\ & = e^{-\tau_n(k-1)/(k+1)}  ((k-1)n)^{1/(k+1)} \cdot ((k-1)n)^{-1/(k+1)} E^h_{\tau_n}(\xi),\end{align*} and noting that the random variables $\tau_n, E^h_{\tau_n}(\xi)$ are independent, it follows that $((k-1)n)^{1/(k+1)} E^h_{\tau_n}(\xi) \to \mathcal E'$ a.s.\ and in mean where $\mathcal E = \mathcal E' \mathcal N^{1/(k+1)}$. 
The claimed convergence follows from the identity $\cC^h_n(\xi)=2(E_{\tau_n}^h(\xi)-1)$.
Convergence of moments can be deduced as in the recursive model. 
\end{proof}

For $s\in [0,1]$, let $\mathcal Y^h_n(s) = \mathcal C^h_n(s)/\Ec{\mathcal C^h_n(\xi)}$. 
A limit  $\sH(s)$ of $\mathcal Y^h_n(s)$  should satisfy $\Ec{\sH(\xi)} = \kappa_k^h$ and 
\begin{align} \label{fix:kgon2}
& \sH \stackrel{d}{=} \, 
\Delta_1^{ 1/(k+1)} \left( \mathbf{1}_{[0,U_1)}(\,\cdot\,) \tilde  \sZ^{h,(1)}\left(\frac{\,\cdot\,}{\Delta_1}\right) 
+ \mathbf{1}_{[U_k,1]}(\,\cdot\,) \tilde \sZ^{h, (1)}\left(\frac{\cdot-U_k}{\Delta_1}\right)  \right) \\
&+ \sum_{i=2}^{k} \mathbf{1}_{[U_{i-1},U_i]} (\,\cdot\,)\left( \tilde \Delta_i^{ 1/(k+1)} \sZ^{h, (i)}\left(\frac{\,\cdot\,-U_{i-1}}{\Delta_i}\right) + \tilde \Delta_1^{ 1/(k+1)} \sZ^{h, (1)}\left(\xi^*\right)\right),\notag 
\end{align}
where $\sZ^{h, (1)}, \ldots, \sZ^{h, (k)}$ are independent copies of $\sH$ independent of $(U_1, \ldots, U_k), \tilde \Delta$. This fixed point equation is of type \eqref{fix:X} where 
$K=k,   
L = k+1, 
\varpi_3 = \ldots = \varpi_k = 1, 
\mathcal R\sim \text{Dirichlet}(1, 1, \ldots, 1)$ and $
\mathcal S\sim\text{Dirichlet}(2, 1, \ldots, 1),$ 
and  $\mathcal R$ and $\mathcal S$ are independent. (We use the same notation for the limit process as in Section \ref{sec:H}.) As in the recursive model, one can prove the following theorem.

\begin{thm}
Let $k \geq 3$. In distribution with respect to the Gromov--Hausdorff topology, we have 
$$(\sT^h_{n}, n^{-1/(k+1)} d^h_{n}) \to (\sT_{\sH}, d_{\sH}).$$
In distribution, the process $\sH$ is the unique continuous excursion satisfying \eqref{fix:kgon2} up to a multiplicative constant.
Almost surely, $\DimM(\sT_{\sH}) = \DimH(\sT_{\sH}) = k+1$. For any $\gamma < 1/(k+1)$, almost surely, there exists a process $\tilde {\sH}$ equivalent to $\sH$ which has $\gamma$-H{\"o}lder continuous paths. 
\end{thm}

{\small 
\setlength{\bibsep}{0.25em}
\bibliographystyle{plainnat}
\bibliography{arxiv_2020}
}

\appendix


\section{Continuity and measurability statements} 

\begin{lem}\label{lem:iota}
The set $\bbK^{\GHP}_\tf$ is a measurable subset of $\bbK^{\GHP}$ and the bijection
$\iota$ between $\bbK^{\GHP}_\tf$ and $\bbK^{\GP}$ is bimeasurable. 
\end{lem}
\begin{proof}
For any $\delta > 0$, the quantity $\kappa_\delta(\fX) = \inf \{ \mu(\{y  \in \cX : d(x,y) 
\leq \delta\}) : x \in \cX \},$ only depends on the $\GHP$-equivalence class of $\fX$. 
Moreover, $\bbK^{\GHP}_\tf = \{ \fX \in \bbK^{\GHP} : \kappa_\delta(\fX) > 0 \text{ for all }\ 
\delta > 0 \}.$ A straightforward application of the Portemanteau lemma shows 
that $\kappa_\delta$ is upper semi-continuous with respect to $\dghp$. (The details are 
given in the proof of Lemma 3.2 in \cite{ALW}.) It follows easily that $\bbK^{\GHP}_\tf$ is 
a measurable set.
The map $\iota$ is continuous since convergence with respect to the Gromov--Hausdorff--Prokhorov 
topology implies convergence with respect to the Gromov--Prokhorov topology.
Its inverse $\iota^{-1}$ is not continuous, and the space $\bbK_\tf^\GHP$ 
(considered as subspace of $\bbK^\GHP$) is not complete. 
Nevertheless, it is important to note that, by \cite[Corollary~5.6]{ALW},
$\bbK_\tf^\GHP$ endowed with the relative topology generated by $\dghp$ is Polish. 
It follows from the Lusin--Souslin theorem, see, e.g.\ \cite[Theorem 15.1]{kechris}, 
that $\iota^{-1}$ is measurable. (We thank Stephan Gufler for pointing out the 
short argument showing the measurability of $\iota^{-1}$.)
\end{proof}

\begin{proof}[Proof of Lemma~\ref{lem:extra}]
 Let $(\sT, d, \mu, \rho)$ be a compact rooted measured real tree. We borrow notation from \cite{duq1}. Recall that $\mathscr B$ denotes the set of branch points of $\sT$. Let $\sigma \in \mathscr B^* := \mathscr B \cup \{\rho\}$ and note that the degree of a point $\sigma \in \mathscr B^*$ is at most countably infinite. For $\sigma \in \mathscr B^*$, let 
$$\mathcal I_\sigma = \begin{cases} \emptyset & \text{if } \text{deg}(\sigma) = 1, \\
\{1, 2, \ldots, \text{deg}(\sigma)-1\} & \text{if } 1 < \text{deg}(\sigma)  < \infty, \\
\N \setminus \{0\} & \text{if } \text{deg}(\sigma) = \infty. \end{cases}$$
Further, let $\mathcal C_\sigma$ be the set of connected components in $\sT \setminus \{\sigma\}$ which do not contain $\rho$. The cardinality of $\mathcal C_\sigma$ is given by the cardinality of $\mathcal I_\sigma$. Fix a bijection $p_\sigma : \mathcal I_\sigma \to \mathcal C_\sigma$ (e.g. by considering the order in which the elements of $C_\sigma$ are visited by a fixed dense sequence in $\sT$).
Let $\mathcal D = \{(\sigma, k) : \sigma \in \mathscr B^*, k \in \mathcal I_\sigma\}$ and $\mathcal U$ be the set of elements $u \in [0,1]^{\mathcal D}$ such that $u_i \neq u_j$ for all $i \neq j$, $i,j\in \mathcal D$. 
For $\sigma, \sigma' \in \sT$, set $\sigma \leq \sigma'$ if $\sigma \in \llb \rho, \sigma'\rrb $ and $\sigma' \leq \sigma$ if $\sigma' \in \llb \rho, \sigma\rrb $. Otherwise, denoting by $\gamma = \sigma \wedge \sigma' \in \mathscr B^*$ the most recent common ancestor of $\sigma$ and $\sigma'$ (that is, the unique node satisfying 
$\llb \rho, \sigma \wedge \sigma' \rrb  = \llb \rho, \sigma \rrb \cap \llb \rho, \sigma' \rrb$),
 and assuming that $\sigma \neq \gamma \neq \sigma'$, there exist $1 \leq i \neq i'$ such that $\sigma \in p_\gamma(i)$ and $\sigma' \in p_\gamma(i')$. Set $\sigma \leq \sigma'$ if $u_{(\gamma,p_\gamma(i))} < u_{(\gamma, p_\gamma(i'))}$ and $\sigma' \leq \sigma$ otherwise. By Proposition 2.4 in \cite{duq1}, $\leq$ is a total order on $\sT$ satisfying the following two properties:
\begin{itemize}
\item [\textbf{(Or1)}] For $\sigma, \sigma' \in \sT$, $\sigma \in  \llb \rho, \sigma'\rrb $ implies $\sigma \leq \sigma'$.
\item [\textbf{(Or2)}] For $\sigma_1 \leq \sigma_2 \leq \sigma_3$ and $\gamma$ defined by $\llb \rho, \gamma \rrb = \llb \rho, \sigma_1 \rrb \cap (\llb \rho, \sigma_2 \rrb \cup \llb \rho, \sigma_3 \rrb),$
we have $\gamma \in \llb \rho, \sigma_2 \rrb$.
\end{itemize}
Note that, by Lemma 2.5 in \cite{duq1}, for $\gamma \in \mathscr B^*$ and two connected components $C, C' \in \mathcal C_\gamma$, we have either $\sigma \leq \sigma'$ for all $\sigma \in C, \sigma' \in C'$ or $\sigma' \leq \sigma$ for all $\sigma \in C, \sigma' \in C'$.

Next, let $\leq$ be a total order on $\sT$ satisfying \textbf{(Or1)} and \textbf{(Or2)}. We construct a function $h \in \Dex$ such that $\fT_h = \fT$ (with respect to isometry classes).
First, set $m(\sigma) = \mu(\{\sigma' \in \sT: \sigma' \leq \sigma\})$. By construction, $m : \sT \to [0,1]$ is  monotonically increasing with respect to the order $\leq$. As $\mu$ has full support and upon setting $\sigma < \sigma'$ if $\sigma \leq \sigma'$ and $\sigma \neq \sigma'$, $m$ is strictly increasing; in particular, 
$m(y) > 0$ for all $y \neq \rho$. 
For $\sigma \neq \rho$, we have $m(\sigma-) := m(\sigma) - \mu(\{\sigma\}) = \lim_{\sigma' \to \sigma, \sigma' < \sigma} m(\sigma')$. For
$\sigma \notin \mathscr B^*$ we also have $m(\sigma) = \lim_{\sigma' \to \sigma, \sigma' > \sigma} m(\sigma')$.
For $\mu(\{\rho\}) \leq x < 1$, let $m^{-1}(x) = \sup \{ \sigma \in \sT: m(\sigma) < x\}$ be the generalized inverse of $m$. For $x < \mu(\{\rho\})$, set $m^{-1}(x) = \rho$. Finally, let $m^{-1}(1) = \lim_{x \to 1} m^{-1}(x)$.
Then, $m^{-1}(x) = \lim_{y \to x, y< x} m^{-1}(y)$ and $m^{-1}(x+) =  \lim_{y \to x, y> x} m^{-1}(y)$. 
Set $h(x) = d(\rho, m^{-1}(x)), x \in [0,1]$. It immediately follows that $h \in \Dex$. Further, $\mathfrak T$ and $\fT_{h}$ are $\GHP$-isometric via $\sigma \mapsto \pi_h(m(\sigma))$, where $\pi_h$ is the canonical surjection from $[0,1]$ onto $\sT_h$. 
From the point of view of measurability, the important observation is that the map which assigns to $u \in \mathcal U$ the function $h$ constructed from the order induced by $u$ is continuous. This follows from the fact that, for any $\varepsilon > 0$, there exist only finitely many points $\sigma \in \mathscr B^*$ such that two distinct elements of $\mathcal C_\sigma$ have mass or diameter exceeding $\varepsilon$. We omit a formal proof. Let $\tilde \eta$ be the distribution of the random variable $h$ when choosing 
$u$ uniformly at random, that is, following the distribution $\Leb^{\otimes \mathcal D}$. Note that, formally, $\tilde \eta$ may depend on $\sT$ and the choice of the bijections $\pi_\sigma, \sigma \in \mathscr B^*$. Thanks to the symmetry of the distribution $\Leb^{\otimes \mathcal D}$ under permuting entries, however, it follows that $\tilde \eta$  only depends on the $\GHP$-isometry class of $\fT$. Therefore, we shall denote this distribution by $\tilde \eta(\fT)$. The corresponding map $\tilde \eta : \bbT^{\GHP}_\tf \to \mathcal M_1(\Dex)$ can be shown to be continuous, hence measurable. Now, for a random variable $\fT$ on $\bbT^{\GHP}_\tf$ with distribution $\nu$, the annealed measure $\eta(\cdot) = \Ec{\tilde \eta({\fT})(\cdot)}$ is the desired probability distribution.
\end{proof}

The following three lemmas concern continuity of functions arising in the definition of $\Psi$ 
in Section \ref{sec:rec_trees} and in the proof of Proposition~\ref{prop:counter_ex}. The proofs  rely on 
the concept of correspondences and have many ideas in common. In our presentation, we focus on a 
detailed proof of Lemma~\ref{lem:continuity_psi} and sketch the arguments needed to prove 
continuity of $\psi_1$ in Lemma~\ref{lem:psi1_cont}. The remaining statements have simpler
proofs, and we omit them.

\begin{lem} \label{lem:conti1}
Let $\mathbf r, \mathbf s \in \Sigma_K$ and $\fX_1, \ldots, \fX_K$ be compact rooted metric 
measured spaces. Let $\chi : \cX_1 \times \ldots  \times \cX_K \to \mathbb K^\GHP$ be the map 
which assigns to each $(x_1, \ldots, x_k) \in \cX_1 \times \ldots  \times \cX_K$ the space
$\mathfrak X$ when the construction described on page \pageref{list} is carried out with these 
values $\br,\bs$ and the glue points $\eta_i = x_i$ for $1\le i\le K$. Then $\chi$ is 
continuous. 
\end{lem}

\begin{lem}\label{lem:continuity_psi}
The map $\psi : (\mathbb K^\GHP)^K \times \Sigma_K^2 \to \cM_1(\mathbb K^\GHP)$ defined in 
Section~\ref{sec:rec_trees}  is continuous.
\end{lem}

\begin{lem}\label{lem:psi1_cont}
The functions $\chi_1: \cX^\N \times c_0^+ \to \bbK^{\GHP}$ and $\psi_1: \bbK^{\GHP} \to \cM_1(\bbK^{\GHP})$ defined in the proof of 
Proposition~\ref{prop:counter_ex}  are continuous.
\end{lem}

\begin{proof}[Proof of Lemma \ref{lem:continuity_psi}]
Let $\varepsilon > 0$ and $\fX_1, \ldots, \fX_K, \fX_1', \ldots, \fX_K'$ be compact rooted 
measured metric spaces. Let  $(Z_1, d^{Z_1})$, \dots, $(Z_k, d^{Z_k})$ be compact metric spaces 
such that $\fX_i, \fX_i' \subseteq Z_i$ and $d^{Z_i} = d_i$ on $\cX_i$ and $d^{Z_i} = d_i'$ on $\cX_i'$ for all $i=1, \ldots, K$. (Such spaces exist. One can, for instance, choose $Z_i = \cX_i \sqcup \cX_i'$.) Further, assume that,  
\begin{enumerate}
\item $\mathrm{d_{\text{\textsc{h}}}^{Z_i}}(\cX_i, \cX_i') <  \varepsilon$,
$d^{Z_i}(\rho_i, \rho_i') <  \varepsilon$, 
and $\mathrm{d_{\text{\textsc{p}}}^{Z_i}}(\mu_i, \mu_i') <  \varepsilon $ for all $i=1, \ldots, K$, 
\item $\mathbf r, \mathbf r', \mathbf s, \mathbf s' \in \Sigma_K$ with $\max\{\|\mathbf r- \mathbf r'\| , \|\mathbf s-\mathbf s'\|\} < \varepsilon$. 
\end{enumerate}
($\Sigma_K$ is endowed with the Euclidean distance.) Finally, assume that the constructions of the 
spaces $\fX = (\cX, d, \mu, \rho)$, and $\fX' = (\cX', d', \mu', \rho')$ described on page
\pageref{list}  are carried out with fixed glue points $\eta_i, \eta_i'$ satisfying
$d^{Z_i}(\eta_i, \eta_i') < \varepsilon$ for all $i=1, \ldots, K$. 
Below, we show that
\begin{align} \label{dghb} 
\dghp(\fX, \fX') \leq (K+1) \left(\varepsilon + \frac{\varepsilon^\alpha}{2} 
\max_{1\le i\le K}   \| \fX_i \|\right). 
\end{align}
Taking this deterministic statement for granted, we can argue as follows to conclude the proof:
First, we keep the compact rooted measured metric spaces fixed and choose the glue points 
randomly. Then, by the previous lemma, (the equivalence classes of) $\fX$ and $\fX'$ are
$\bbK^\GHP$-valued random variables. Further, by a coupling theorem due to Strassen
\cite[Page 438]{str65}, we may sample the pairs of gluepoints $(\eta_i, \eta_i')$ on
$\cX_i \times \cX_i'$ in such a way that $\Law(\eta_i) = \mu_i$, $\Law(\eta_i') = \mu_i'$ and
$\pc{d^{Z_i}(\eta_i, \eta_i') \geq \varepsilon} \leq \varepsilon$ for all $i=1, \ldots, K$. 
Using this coupling and writing $\gamma$ for the right-hand side of \eqref{dghb} yields
$\pc{\dghp(\fX, \fX') \geq \gamma } \leq K \varepsilon$. Hence, again using Strassen's 
theorem, it follows that $\mathrm{d_{\text{\textsc{p}}}}(\Law(\fX), \Law(\fX')) \leq \gamma$. 
Since $\gamma$ can be made arbitrarily small by choice of $\varepsilon>0$, this implies the 
claimed continuity. (The remainder of the proof also shows that, for 
fixed $\mathbf r, \mathbf s$,  the map $\psi$ with domain $(\mathbb K^\GHP)^K$ is 
uniformly continuous.)

It remains to show \eqref{dghb}. To this end, we recall the well-known characterization of 
the Gromov--Hausdorff distance using correspondences: for two sets $S, T$, a set
$\mathfrak R \subseteq S \times T$ is called a \emph{correspondence} if for
all $s \in S$  there exists $t \in T$ with $(s,t) \in \mathfrak R$ and vice versa. For metric spaces $(S,d_S), (T,d_T)$, the \emph{distorsion} of a 
correspondence $\mathfrak R$ is defined by 
\begin{align*} \text{dis}(\mathfrak R) = \sup \{|d_S(s,s') - d_T(t, t')| : (s, t), (s', t') \in \mathfrak R \}. \end{align*}
Define the Gromov--Hausdorff distance between compact rooted metric spaces $(S,d_S, \rho_S)$, and
$(T, d_T, \rho_T)$ using the notation from Eq. \eqref{def:gh} by 
$$\dgh((S,d_S, \rho_S), (T, d_T, \rho_T)) = \inf_{Z, \phi, \phi'} \max 
\left \{ \dha(\phi(S),\phi'(T)), d^Z(\phi(S), \phi'(T)) \right \}. $$
It is standard (\cite[see, e.g.,][Theorem 7.3.5]{BuBuIv2001} in the unrooted set-up) that, for 
compact rooted metric spaces $(S, d_S, \rho_S)$ and $(T, d_T, \rho_T)$, we have 
\begin{align} \nonumber
& \dgh((S, d_S, \rho_S), (T, d_T, \rho_T)) \\
& = \frac 1 2 \inf \{\text{dis}(\mathfrak R) 
: \mathfrak R \text{ is a correspondence with } (\rho_S, \rho_T) \in \mathfrak R\}. \label{epw}
\end{align}
It easy to construct optimal  correspondences explicitly: For $i=1, \ldots, K$, we can set
$$\mathfrak R_i = \{(x,y) : x \in \cX_i, y \in \cX_i', d^{Z_i}(x,y) < \varepsilon\}.$$
By the triangle inequality, we have $\text{dis}(\mathfrak R_i) \leq 2 \varepsilon $.
Recall the projections $\cp, (\cp)'$ in step \emph{ii)} of the construction and define a 
correspondence for $\cX$ and $\cX'$ by 
$$\mathfrak R = \bigcup_{i=1}^K \{(\cp(x), \cp(x')) : (x,x') \in \mathfrak R_i \}.$$
(Note that the roots are already identified in the $\mathfrak R_i$.)
From \eqref{epw}, we see that \sloppy $\dgh((\cX, d, \rho), (\cX', d',\rho')) \leq \frac 1 2 \text{dis}(\mathfrak R)$. In order to extend the result to the Gromov--Hausdorff--Prokhorov distance, we 
consider a specific embedding of the spaces. The following construction is standard, see, e.g.\ the proof of Theorem 7.3.5 in \cite{BuBuIv2001}.
Let $Z = \cX \sqcup \cX'$ and set $d^Z = d$ on $\cX^2$, $d^Z= d'$ on $\cX'\times \cX'$ while, for $x \in \cX, y \in \cX'$, define
$$d^Z(x,y) = \inf \{d(x,x_1) +d'(x_2, y) : (x_1, x_2) \in \mathfrak R\} + \frac 1 2 \text{dis}(\mathfrak R).$$
Finally, for $x \in \cX', y \in \cX$, set $d^Z(x,y) = d^Z(y,x)$. A straightforward computation shows that $d^Z$ is a metric on $Z$. 
Further, by construction, $\dha((\cX, d, \rho), (\cX', d',\rho')) = d^Z(\rho, \rho') = \frac 1 2 \text{dis}(\mathfrak R)$.
To study  the Prokhorov distance between $\mu$ and $\mu'$, let $(\sigma_i, \sigma_i'), i = 1, \ldots, K$ be pairs of random variables on $\cX_i \times \cX_i'$ with $\Law(\sigma_i) = \mu_i, \Law(\sigma_i') = \mu_i'$ and 
$\pc{|\sigma_i - \sigma_i'| \geq \varepsilon} \leq \varepsilon$. By our assumptions on $\mathbf s, \mathbf s'$, there exists a pair of random variables $(J, J')$ with
$\Prob{ J = i} = s_i$ and $\pc{J'=i} = s_i'$ for all $i=1, \ldots, K$ and $\pc{J\neq J'} \leq K \varepsilon$. Note that $\Law(\cp(\sigma_J)) = \mu$ and $\Law((\cp)'(\sigma'_{J'})) = \mu'$. Hence, 
as $d^{Z_i}(x,y) < \varepsilon$ for $x \in \cX_i, y \in \cX_i'$ implies $d^Z(\cp(x), (\cp)'(y)) \leq \frac 1 2 \text{dis}(\mathfrak R)$, it follows that $\pc{|\cp(\sigma_J) - (\cp)'(\sigma'_{J'})| > \frac 1 2 \text{dis}(\mathfrak R)} \leq (K+1)\varepsilon$. Thus, ${\mathrm{d_{\text{\textsc{p}}}}}(\mu, \mu') \leq \max \{\frac 1 2 \text{dis}(\mathfrak R), (K+1) \varepsilon\}$ and the same bound applies to $\dghp(\fX, \fX')$.

It remains to find an upper bound on $\text{dis}(\mathfrak R)$. We have 
\begin{align*} \text{dis}(\mathfrak R) = \sup_{1 \leq i,j \leq K} \sup \{& |d(\cp(x),\cp(y)) - d'((\cp)'(x'),(\cp)'(y'))| : \\
& (x,x') \in \mathfrak R_i, (y,y') \in \mathfrak R_j \}.\end{align*}
For $(x,x'), (y,y') \in \mathfrak R_i$, it follows that under our imposed assumptions, we have $$|d(\cp(x),\cp(y)) - d'(\cp(x'), \cp(y'))| \leq 2 \varepsilon + \varepsilon^\alpha \| \fX_i \|. $$
An application of the triangle inequality shows that 
$$\text{dis}(\mathfrak R) \leq K \Big(2 \varepsilon +  \varepsilon^\alpha  \max_{i=1, \ldots, K} \| \fX_i \|\Big),$$
which completes the proof.
\end{proof}

\begin{proof} [Proof of Lemma \ref{lem:psi1_cont}]
We only sketch the arguments necessary to prove of continuity of $\psi_1$ which go beyond the details presented in the previous proof.
Fix $\fX \in \bbK^{\GHP}$. Let $\varepsilon > 0$ and
$\fX' \in \bbK^{\GHP}$ with $\dghp(\fX, \fX') \leq \varepsilon$. Let $(\cX, d, \mu, \rho)$ and $(\cX', d', \mu', \rho')$ be representatives of these classes embedded in the same compact metric space $(Z, d^Z)$. For $n \in \Z$, let $I_n = [2^{-(n+1)}, 2^{-n})$. We can construct coupled Poisson processes $\mathcal P$ on $\cX \times [0,\infty)$ and $\mathcal P'$ on $\cX' \times [0,\infty)$ by superposing coupled independent Poisson processes $\mathcal P_n$ on $\cX \times I_n$ and
$\mathcal P_n'$ on $\cX' \times I_n$ as follows: first, independently for different values of $n$, let $(U_1^{(n)}, S_1^{(n)}),  (U_2^{(n)}, S_2^{(n)}), \ldots,$ be a sequence of independent and identically distributed $(\cX \times I_n)$-valued random variables, where $U_i^{(n)}, S_i^{(n)}$ are independent, $U_i^{(n)}$ is distributed according to $\mu$ and $S_i^{(n)}$ follows the distribution of a non-negative random variable with density $s^{-1-1/\alpha}$ conditioned on taking a value in $I_n$. As $\mathrm{d^Z_{\text{\textsc{p}}}}(\mu, \mu') \leq \varepsilon$, for any $n \in \Z$, we can construct a sequence of independent and identically distributed $\cX'$-valued random variables $V_1^{(n)}, V_2^{(n)}, \ldots$ such that $\pc{d^Z(U_i^{(n)}, V_i^{(n)}) \geq \varepsilon} \leq \varepsilon$ for all $i \geq 1$.
Let $N_n, n \in \Z$,  be a family of independent random variables which is independent of all previously defined quantities, where
$N_n$ follows the Poisson distribution with parameter $\int_{I_n} s^{-1-1/\alpha} ds = \alpha 2^{n/\alpha}(2^{1/\alpha}-1)$.
The sets of points $\{ (U_i^{(n)}, S_i^{(n)}) : 1 \leq i \in N_n\}$ and $\{ (V_i^{(n)}, S_i^{(n)}) : 1 \leq i \leq N_n\}$ constitute Poisson processes $\mathcal P_n$ on $\cX \times I_n$, 
and $\mathcal P'_n$ on $\cX' \times I_n$, respectively. Upon superposing the processes for different values of $n$, we obtain the sought coupled processes $\mathcal P$ and $\mathcal P'$.
It should be clear and can be shown using correspondences that, for any $\delta > 0$, $\sup_{\fX'}  \Prob{\dghp(f(\mathcal P), f(\mathcal P')) \geq \delta} \to 0$ as $\varepsilon \to 0$, where the supremum is taken over all $\fX' \in \bbK^{\GHP}$ satisfying $\dghp(\fX, \fX') \leq \varepsilon$. (Recall that $\fX$ is kept fixed.) Thus, $\sup_{\fX'} \mathrm{d_{\text{\textsc{p}}}}(\Law(f(\mathcal P)), \Law(f(\mathcal P')) \to 0$ as $\varepsilon \to 0$ showing the claimed continuity.
\end{proof}

\begin{lem}\label{lem:Hs_meas}
For any $s>0$, the map $H^s$ is measurable for $\dgh$, and thus $\DimH$ as well.
\end{lem}
\begin{proof}For fixed $\delta > 0$, let
$$H^s_\delta(A) := \inf \Bigg \{\sum_{i \geq 1} |U_i|^s: 
A \subseteq \bigcup_{i \geq 1} U_i \text{ and } |U_i| \leq  \delta 
\text{ for all } i \ge 1  \Bigg \}.$$
Then, $H^s = \lim_{\delta \downarrow 0} H^s_\delta$ and it is enough to prove the measurability 
of $H^s_\delta : \bbK^\GH \to [0,\infty]$ for fixed $\delta > 0$. To this end, we show that 
the function is upper-semicontinuous. Let $\delta > 0, s \geq 0$ and $\fX, \fX_1, \fX_2, \ldots$ 
be compact metric spaces with $d_\GH(\fX_n, \fX) \to 0$. 
For $\varepsilon  > 0$, by compactness, there exists a natural number $N$ and sets $U_1, \ldots, 
U_n$ with $|U_i| < \delta$ for all $i=1, \ldots, N$ such that $\cX \subseteq \bigcup_{i=1}^N U_i$ 
and $H_\delta^s(\fX) \geq \sum_{i=1}^N |U_i|^s - \varepsilon$. 
Now, let $0 < \varepsilon' <  \varepsilon$ be sufficiently small such that
$|U_i^{\varepsilon'}| < \delta$ for all $i=1, \ldots N$.
Choose $n$ large enough such that $d_\GH(\fX_n, \fX) < \varepsilon' /2$. 
We may assume that $\cX_n$ and $\cX$ are embedded in a compact metric space $(Z, d^Z)$ such that
$\dha(\fX_n, \fX) < \varepsilon'/2$. Then, $H_\delta^s(\fX_n) \leq \sum_{i=1}^N |U_i^{\varepsilon'}|^s \leq (1+\varepsilon) H_\delta^s(\fX) + 2 \varepsilon$. As this inequality is true for sufficiently large $n$, we can take the limit superior on the left-hand side. Then, letting $\varepsilon \to 0$ on the right-hand side shows the claim. 
Finally, it is easy to deduce measurability of $\DimH$, e.g.\ from the representation
$$\{\fX \in \bbK^\GH : \DimH(\fX) > t \} = \bigcup_{q > t} \bigcap_{\varepsilon > 0} \bigcap_{\delta > 0} \{\fX \in \bbK^\GH : H_\delta^s(\fX) < \varepsilon\}, \quad t \geq 0,$$
where $q,\varepsilon, \delta$ take rational values.
\end{proof}

The proof of the next lemma runs along the same lines as the proof of Lemma~\ref{lem:Hs_meas}.  
(Technically, $N_\cdot(\delta)$ plays a very similar role as $H^s_\delta(\cdot)$.) 
We omit the details.
\begin{lem}\label{lem:NXdelta}
Let $\delta>0$. The map from $\bbK^{\GH}$ to $\N$ that to $\fX\in \bbK^{\GH}$ 
assigns $N_\cX(\delta)$, the smallest number of open balls of radius $\delta$ needed to 
cover $\cX$, is upper-semicontinuous 
In particular, the maps $\DimuM, \DimoM : \bbK^{\GH} \to [0, \infty]$ are measurable.
\end{lem}

\section{H\"older continuity of $\sH$}

\begin{proof}[Proof of Proposition~\ref{prop:opt}]
The positive result for $\alpha < \varrho$ follows from bounds on the moments of $\sH$ provided in Lemma~\ref{prop:hoelderHpos} 
and Kolmogorov's criterion.  
Next, recall from \cite{BrSu2013a} that $h(t) := \E{\sH(t)} =  \kappa' \sqrt{t(1-t)}$ for some $\kappa' > 0$.
Let $\varrho < \alpha < \gamma$. With $Q_n^\vartheta$ defined in Eq. \eqref{def:qn}, where $Q_0^\vartheta = h$, the uniform almost sure limit $\cX$ of $Q^\emptyset_n$ in Proposition \ref{prop:conlimit} is measurable with respect to $\{(\cR^{\sigma}, \cS^{ \sigma}, \Xi^{ \sigma}): \sigma \in \Theta\}$. Thus, since $\gamma$-H{\"o}lder continuity is a tail event of this $\sigma$-algebra, by Kolmogorov's 
zero-one law, it suffices to show that $\cX$ fails to be $\gamma$-H{\"o}lder continuous with 
positive probability. For $n \geq 1$, let $C_n = \cup_{|\vartheta| = n} \partial \Lambda_\vartheta $. 
Observe that, for some $r > 0$, we have $|h(y) - h(x)| \geq r|y-x|^2$ for all $x,y \in [0,1/2]$ and $x,y \in [1/2,1]$. Fix $\vartheta \in \Theta_\ell$. Let $x,y \in \Lambda_\vartheta$ with $(x,y) \cap C_\ell = \emptyset$ 
and denote by $x_\ell, y_\ell$ their relative positions inside $\Lambda_\vartheta$, that is $x_\ell = (x - \inf \Lambda_\vartheta)/\scL(\vartheta)$, analogously for $y_\ell$. 
If $x_\ell,y_\ell \in [0,1/2]$ or $x_\ell, y_\ell \in [1/2,1]$, then
$$\cV(\vartheta)^{1/3} \geq \ell^{2-\alpha} \scL(\vartheta)^\alpha$$
implies
\begin{align*}
|Q^\emptyset_\ell(y) - Q^\emptyset_\ell(x)| &= \cV(\vartheta)^{1/3} |h(y_\ell) - h(x_\ell)|  \\ 
&\geq \ell^{2-\alpha}\scL(\vartheta)^{\alpha} |h(y_\ell) - h(x_\ell)| \geq r \ell^{2-\alpha} |y-x|^\alpha |y_\ell - x_\ell|^{2-\alpha}.
\end{align*}
As $\Lambda_\vartheta$ is the union of at most $\ell+1$ intervals, we can find 
$x,y \in \Lambda(\vartheta), (x,y) \cap C_\ell  = \emptyset$ satisfying the latter inequality with 
$|y_\ell - x_\ell| \geq 1/(4\ell)$ where $x_\ell,y_\ell \in [0,1/2]$ or $x_\ell, y_\ell \in [1/2,1]$.  
Hence, for these $x,y$ we deduce
$$|Q^\emptyset_\ell(y) - Q^\emptyset_\ell(x)| \geq \frac{|y-x|^\alpha}{16/r}.$$
As $n \to \infty$, almost surely, the maximal distance between consecutive points in $C_n$ converges 
to zero. Hence,  $\cX$ is not $\gamma$-H{\"o}lder continuous if there 
exists $n \in \N$ and an infinite path $\vartheta = \varepsilon_1 \varepsilon_2 \ldots$ such that, 
for all $k \in \N$, with $\vartheta_n = \varepsilon_1 \ldots \varepsilon_n$, 
$$ \cV(\vartheta_{kn})^{1/3} \geq (kn)^{2-\alpha} \scL(\vartheta_{kn})^\alpha.$$
Below, we will show that this event has positive probability for some $n \in \N$ (in fact, for 
all $n$ large enough).
For $\vartheta, \sigma \in \Theta$, let  
$$A_{\sigma}^\vartheta = \left \{  \frac{\cV(\vartheta \sigma)^{1/3}}{\cV(\vartheta)^{1/3}} \geq |\sigma|^{2-\alpha} \frac{\scL(\vartheta \sigma)^{\alpha}}{\scL(\vartheta)^{\alpha}} \right\}\,.$$

\medskip 
Let $N:=2^n$ and $\Theta^*$ be the complete $N$-ary tree with nodes on level $k$ denoted by $\Theta_{k,n}$. 
Moreover, let $S$  be the random subtree of $\Theta^*$ in which a node $\vartheta^* = 
\vartheta_1^*\ldots \vartheta_k^*$ with $\vartheta_1^*, \ldots, \vartheta_k^* \in \Theta_n$ on 
level $k$ exists if, for all $0 \leq i \leq k-1$, the event 
$A_{\vartheta_{i+1}^*}^{\vartheta_1^*\ldots \vartheta_i^*}$ occurs. 
By construction,  for fixed $n \geq 1$, 
$$\left \{\big(\mathbf{1}_{A_{\sigma}^{\vartheta}}\big)_{\sigma \in \Theta_n}
~:~\vartheta \in \Theta_{k,n}, k \in \N \cup \{0\}\right\}$$ is a family of independent 
and identically distributed random vectors. 
Thus, $S$ is a branching process with offspring mean 
$$\sum_{\vartheta \in \Theta_n} \pc{A_{\vartheta}^\emptyset} 
=  \sum_{\vartheta  \in \Theta_n} \Prob{\cV(\vartheta)^{1/3} 
> n^{2-\alpha}\scL(\vartheta)^\alpha}.$$
By the elementary formula $\Prob{A \cup B} = \Prob{A} + \Prob{B} - \Prob{A\cap B}$ it is easy to see 
that 
$$\sum_{\vartheta \in \Theta_n} \pc{A_{\vartheta}^\emptyset} 
= 2^n  \Prob{ \prod_{i=1}^n W_i^{1/3} > n^{2-\alpha}\prod_{i=1}^n \xoverline W_i^\alpha},$$
where $W_1, \xoverline W_1, \ldots, W_n, \xoverline W_n$ are independent and identically uniformly distributed on 
the unit interval. We may assume $\alpha < 1/3$. Let $\delta > 0$ and $E_1, F_1, \ldots, E_n, F_n$ be independent standard exponentials.
Then, by an application of Cram{\'e}r's theorem for sums of independent and identically distributed random variables with finite momentum generating function in a neighborhood of zero, for all $n$ sufficiently large, 
\begin{align*}
\Prob{\prod_{i=1}^n W_i^{1/3} > n^{2-\alpha} \prod_{i=1}^n \xoverline W_i^\alpha} 
& \geq \Prob{ \sum_{i=1}^n  \Big(\alpha F_i - \frac 1 3 E_i\Big)  > \delta n } \\
&= \exp(-I(\delta)n + o(n)),
\end{align*}
where $I(x), x \in [\alpha - 1/3, \infty)$ denotes the large deviations rate function of the random 
variable $\alpha F_1 - E_1/3$ (see, e.g., \cite{DeZe1998}) given by 
\begin{align*} I(x) = \sup_{-3 < s < 1/\alpha}  s x - \log \frac{3}{3+s} \frac{1}{1- s\alpha} & \leq  
\frac{x}{\alpha} - \inf_{-3 < s < 1/\alpha} \log \frac{3}{3+s} \frac{1}{1- s\alpha}  \\ & = \frac {x}{\alpha} - \log  \frac {12\alpha}{(3\alpha+1)^2}.\end{align*}
Thus, for all $n$ large enough,
\begin{align*}
\sum_{\vartheta \in \Theta_n} \pc{A_{\vartheta}^\emptyset} 
& \geq (2c (1+o(1)))^n, \quad c = \frac {24\alpha}{(3\alpha+1)^2}  e^{-\delta / \alpha}.
\end{align*}
Since $\alpha > \varrho$, upon choosing $\delta > 0$ sufficiently small, we obtain $c > 1/2$.
Thus, for all $n$ sufficiently large, 
with positive probability, there exists an infinite path $\vartheta_1^* \vartheta_2^* \ldots$  in 
$\Theta^*$ with $\vartheta_i^* \in \Theta_n$ such the events 
$A_{ \vartheta_{i+1}^*}^{\vartheta_1^*\ldots \vartheta_i^*}$ occur for all 
$i \in \N \cup \{0\}$. Along this path written as $\vartheta = \varepsilon_1\varepsilon_2 \ldots$, 
we deduce
\begin{align*}
\cV(\vartheta_{kn}) \geq n^{(2-\alpha)k}\scL(\vartheta_{kn})^{\alpha}  \geq (kn)^{2-\alpha} \scL(\vartheta_{kn})^{\alpha}.
\end{align*}
for all $k \in \N$. This concludes the proof.
\end{proof}

The final lemma generalizes Proposition 4.1 in \cite{legalcu} to non-integer values of $p$.
\begin{lem} \label{prop:hoelderHpos}
For all $\varepsilon > 0$ and $p \in [0,\infty)$, there exists $K >0$ such that, for all $x \in [0,1]$, 
$$\Ec{\sH(x)^p} \leq K (x(1-x))^{2p /(p+3) - \varepsilon}$$
\end{lem}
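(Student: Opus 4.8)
The plan is to exploit the fixed-point equation \eqref{eqH} for $\sH$ together with a Gronwall-type argument on the function $t\mapsto \Ec{\sH(t)^p}$, in the same spirit as the proof of Proposition~\ref{prop:tor} and as Proposition~4.1 in \cite{legalcu}. First I would fix $p\ge 1$ (the case $p\le 1$ follows from Jensen once the result is known for some $p\ge1$, or can be absorbed into the constant since $(x(1-x))^{2p/(p+3)-\varepsilon}$ is bounded below on any interval bounded away from the endpoints, so it suffices to treat a neighbourhood of $0$ and of $1$, and by the symmetry $t\leftrightarrow 1-t$ of $\sH$ only $t\downarrow 0$ matters). Set $G_p(t)=\Ec{\sH(t)^p}$; from \eqref{fix:X}/\eqref{eqH}, for $t\in(0,U_1)$ one has $\sH(t)=W^{1/3}\sH^{(1)}(t/\Delta_1)$, while on $(U_1,U_2)$ an extra additive copy $W^{1/3}\sH^{(1)}(\xi)$ appears. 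Conditioning on $(\Delta_1,\Delta_2,W,\xi)$ and using that $\|\sH\|$ has moments of all orders (Theorem~\ref{thm:treeuniq}), the only term that matters near $t=0$ is the first one, giving, for small $t$,
\[
G_p(t)\;\le\; \Ec{W^{p/3}\,\mathbf 1_{\{\Delta_1>t\}}\,G_p(t/\Delta_1)}\;+\;R(t),
\]
where $R(t)$ collects the contributions of the regions $[U_1,1]$ — these are bounded (in fact $R(t)=O(1)$, or even $R$ is Hölder of the right order by an easy direct estimate using $\Ec{\|\sH\|^p}<\infty$ and $\pc{\Delta_1\le t}=O(t)$). Since $(\Delta_1,W)$ has $\Delta_1\sim\mathrm{Beta}(2,1)$ and $W$ uniform and independent, I would compute $\Ec{W^{p/3}\Delta_1^{q}}=\frac{3}{p+3}\cdot\frac{2}{q+2}$ and observe that this equals $1$ precisely when $q=q(p):=\frac{6}{p+3}-2\cdot\frac{?}{}$ — more cleanly, one finds the exponent $\theta=\theta(p)$ solving $\Ec{W^{p/3}\Delta_1^{\theta}}=1$ is exactly $\theta(p)=\frac{2p/(p+3)}{1}$... let me just say: the relevant balance exponent is $\theta_p=\frac{2p}{p+3}$, which is the target exponent in the statement (minus the $\varepsilon$ slack).

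The core step is then the Gronwall iteration: I would guess an ansatz $G_p(t)\le K\,t^{\theta_p-\varepsilon}$ for $t$ in a small interval $[0,t_0]$ and verify it is self-improving. Plugging in, the main term becomes $K\,t^{\theta_p-\varepsilon}\,\Ec{W^{p/3}\Delta_1^{-(\theta_p-\varepsilon)}\mathbf 1_{\{\Delta_1>t\}}}$; because $\Ec{W^{p/3}\Delta_1^{-(\theta_p-\varepsilon)}}<1$ strictly (the exponent $\theta_p-\varepsilon$ is strictly below the critical $\theta_p$, and the $\mathbf 1_{\{\Delta_1>t\}}$ only helps), the factor multiplying $K\,t^{\theta_p-\varepsilon}$ is some $\rho<1$, so for $t_0$ small and $K$ large (to dominate $R(t)$ and to handle the interval $[t_0,1/2]$ where $G_p$ is bounded and $(x(1-x))^{\theta_p-\varepsilon}$ is bounded below) the bound closes. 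To make this rigorous rather than circular I would run it exactly as in Proposition~\ref{prop:tor}: let $r(t)=\int_t^\infty s^{-\theta_p/\alpha'-1}G_p(s)\,ds$ for a suitable exponent, differentiate, and integrate the resulting differential inequality $r'(t)\ge -c\,t^{-1}r(t)$, concluding $r(t)=O(t^{-c})$ and hence $G_p(t)=O(t^{\theta_p-\varepsilon})$; the $\varepsilon$ appears because one uses $\Ec{W^{p/3}\Delta_1^{-(\theta_p-\varepsilon)}}<1$ with strict slack, whereas at the critical exponent the integral diverges logarithmically.

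The main obstacle I anticipate is bookkeeping the "error'' term $R(t)$ and, relatedly, justifying that on $[U_1,1]$ the process is genuinely a rescaled-and-shifted copy of $\sH$ evaluated away from its own endpoints, so that its $p$-th moment is $O(1)$ uniformly — this needs $\Ec{\|\sH\|^p}<\infty$ (available) and care that the additive term $W^{1/3}\sH^{(1)}(\xi)$ contributes only a bounded amount in $L^p$. A secondary technical point is regularity/measurability of $G_p$ (continuity a.e., so that one may evaluate the differential inequality at continuity points), which is handled exactly as in the proof of Proposition~\ref{prop:tor} via dominated convergence. Once these are in place, the exponent identification $\theta_p=\frac{2p}{p+3}$ is a one-line Beta-integral computation, and Kolmogorov's continuity criterion then upgrades the moment bound to the Hölder statement used in Proposition~\ref{prop:opt} (with optimal exponent $\sup_p(\theta_p-1)/p\cdot$... giving $\varrho=1-\tfrac23\sqrt2$ after optimizing over $p$).
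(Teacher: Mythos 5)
Your overall strategy — set up an integral inequality for $G_p(t)=\Ec{\sH(t)^p}$ from the fixed-point equation \eqref{eqH}, identify the critical exponent $2p/(p+3)$ by balancing $\Ec{W^{p/3}}\Ec{\Delta_1^{-\theta}}=1$, and run a Gronwall iteration \`a la Proposition~\ref{prop:tor} with an $\varepsilon$-slack — is indeed the same spine as the paper's argument. However, there are two genuine gaps in how you handle the contributions away from the piece $[0,U_1)$.

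First, lumping everything on $[U_1,1]$ into an error $R(t)$ and bounding it by $\Ec{\|\sH\|^p}\cdot\pc{U_1\le t}=O(t)$ is \emph{not} sufficient for $p>3$: there $2p/(p+3)>1$, so the target bound $t^{2p/(p+3)-\varepsilon}$ is $o(t)$ for $\varepsilon$ small, and the error dominates the quantity you are trying to estimate. The Gronwall iteration can then only deliver $G_p(t)=O(t)$, which is weaker than the claim. One must not discard the middle piece: on $[U_1,U_2)$ the terms $(1-W)^{p/3}(\sH^{(2)})^p((t-U_1)/\Delta_2)$ and $W^{p/3}(\sH^{(1)})^p(\xi)$ are evaluated at a \emph{small} argument (forced by $U_1\le t$), so they are themselves of the same order as $G_p$ and must be carried in the main integral inequality — this is exactly why the paper's recursion has coefficient $2q=6/(p+3)$ rather than $q$.

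Second, and relatedly, you do not address the real obstruction for non-integer $p$: on $[U_1,U_2)$ one faces $\Ec{[(1-W)^{1/3}\sH^{(2)}(\cdot)+W^{1/3}\sH^{(1)}(\xi)]^p}$, which does not factor through $G_p$ or $m_p$ alone. The paper's resolution is the pointwise inequality $(a+b)^p\le a^p+b^p+C_1(a^{p-1}b+ab^{p-1})$, which isolates exactly the two "diagonal'' contributions that feed back into the main recursion, and relegates only the genuine cross-terms (involving products like $(\sH^{(2)})^{p-1}\sH^{(1)}(\xi)$) to an error $s_p(t)$. That error is then shown to be of strictly smaller order than $G_p$ (using the Jensen \emph{lower} bound $G_p(t)\ge C t^{p/2}$), directly for $1<p\le2$ and by induction on $\lceil p\rceil$ for larger $p$ (the cross-terms involve $m_{p-1}$, which is controlled by the induction hypothesis). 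Without this decomposition and the induction on the integer part of $p$, the argument does not close. As written, your proposal names the shape of the answer but skips the step that actually makes the non-integer case work.
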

\begin{proof}
We provide the minor modifications necessary to extend Proposition 4.1 in \cite{legalcu} to the non-integer case without presenting tedious calculations. First, by Jensen's inequality, 
since $\E{\sH(x)} = \kappa' \sqrt{x(1-x)}$ with $\kappa' = 1 / \Gamma(4/3)$, we have
\begin{align}
&\E{\sH(x)^p} \leq (\kappa') ^p (x(1-x))^{p/2}, \quad 0 \leq p \leq 1, \label{b80} \\
&\E{\sH(x)^p} \geq (\kappa') ^p (x(1-x))^{p/2}, \quad p \geq 1. \label{b81}
\end{align}
Thus, for $0 \leq p \leq 1$, the assertion follows immediately from \eqref{b80}. For $p \in (1,\infty)$, we do not have a integral recursion for $m_p(t) = \E{\sH(t)^p}$ such as (17) in \cite{legalcu} unless $p$ is integer. However, applying the inequality $(a+b)^p \leq a^p + b^p + C_1 (a^{p-1} b + a b^{p-1})$ for $a, b \geq 0$ and some $C_1 = C_1(p)$, to the stochastic fixed point equation Eq. \eqref{eqH}, we have, in a stochastic sense, 
\begin{align}  
& \sH(t)^p \notag \\ 
& \leq  \In{[0,U_1)}(t) W^{p/3} (\sH^{(1)})^p \left(\frac{t}{\Delta_1}\right) 
 + \In{[U_2,1]}(t)  W^{p /3} (\sH^{(1)})^p \left(\frac{t-\Delta_2}{\Delta_1}\right)  \nonumber \\
&  + \In{[U_1,U_2)}(t) \left( (1-W)^{p/3} (\sH^{(2)})^p \left(\frac{t-U_1}{\Delta_2} \right) 
   +  W^{p/3} (\sH^{(1)})^p \left(\xi \right)\right) \nonumber \\ 
 &  + \In{[U_1,U_2)}(t) \left(  C_1 (1-W)^{(p-1)/3} W^{1/3} (\sH^{(2)})^{p-1} \left(\frac{t-U_1}{\Delta_2} \right)   \sH^{(1)} \left(\xi \right)\right) \label{b82} \\
 &  + \In{[U_1,U_2)}(t) \left(C_1 (1-W)^{1/3} W^{(p-1)/3} (\sH^{(2)}) \left(\frac{t-U_1}{\Delta_2} \right)   (\sH^{(1)})^{p-1} \left(\xi \right) \right),  \label{b83}
\end{align}
with conditions as in Eq. \eqref{eqH} on the right hand side.
Subsequently, we consider $0 \leq t \leq 1/2$ which suffices by symmetry. 
With $q = 3/(3+p)$, taking the expectation on both sides of the last display leads to
$$m_p(t) \leq 2q (1-t)^2 \int_{0}^t m_p(x) (1-x)^{-3} dx +  2q t^2 \int_{t}^1 m_p(x) x^{-3} dx + s_p(t),$$
where $s_p(t)$ is the sum of the expectation of \eqref{b82} and \eqref{b83}. As in the proof of Proposition 4.1 in \cite{legalcu}, relying on first, Lemma 4.2 there for $\sH$ instead of $M$, and second, a stochastic inequality inverse to the display above based on $(a+b)^p \geq a^p + b^p$ for $a,b \geq 0$, one can show that the first summand has negligible contribution as $t \to 0$. In other words, for any $\delta > 0$, there exists $t_0$ such that, for $t \leq t_0$, we have
\begin{align} \label{b87} m_p(t) \leq   2(1+\delta)q t^2 \int_{t}^1 m_p(x) x^{-3} dx + 2s_p(t). \end{align}
Furthermore, for some $C_2 = C_2(p, t_0, \delta) > 0$, 
$$m_p(t) \leq   2(1+\delta)q t^2 \int_{t}^{t_0} m_p(x) x^{-3} dx + 2s_p(t) + C_2 t^2.$$
Now, if we were to drop $s_p(t)$, then, by applying Gronwall's lemma to the function $m_p(t) t^{-2}$, we could deduce
$$m_p(t) \leq C_3 t^{2p/(p + 3) - \delta q}$$
for all $t \in [0,1]$ and some $C_3 = C_3(p, t_0, \delta)$.  This would give the assertion as $\delta$ was chosen arbitrarily.
For a rigorous verification, we start with the case $1 < p \leq 2$. Then, a direct computation shows that, for some $C_4 = C_4(p)$, we have $s_p(t) \leq C_4 t^{(p + 2)/2}$. Thus, by \eqref{b81}, $s_p(t)$ is asymptotically negligible compared to $m_p(t)$. Using \eqref{b87}, for any $\delta' > 0$, upon decreasing $t_0$ if necessary, we have
$$m_p(t) \leq   2(1+\delta)(1 + \delta') q t^2 \int_{ t}^1 m_p(x) x^{-3} dx.$$
As indicated, the claim now follows from Gronwall's lemma with a suitable choice of $\delta$ and $\delta'$.
For $p > 2$, we proceed by induction. We may assume that $\varepsilon >0$ is chosen small enough such that $3/2 + 2(p-1)/(p+2) - \varepsilon > 2$. 
By the induction hypothesis, there exists $C_5 = C_5(p)$, such that $s_p(t) \leq C_5 t^{3/2 + 2(p-1)/(p+2) - \varepsilon}$. Thus, $s_p(t)t^{-2}$ is bounded on $[0,1]$ and the result follows as indicated from inequality \eqref{b87}.
\end{proof}

\end{document}